\newcounter{thmcounter}
\numberwithin{thmcounter}{section}
\numberwithin{equation}{thmcounter}
\newtheorem{theorem}[thmcounter]{Theorem}
\newtheorem{proposition}[thmcounter]{Proposition}
\newtheorem{lemma}[thmcounter]{Lemma}
\newtheorem{corollary}[thmcounter]{Corollary}
\newtheorem{problem}[thmcounter]{Problem}
\theoremstyle{definition}
\newtheorem{definition}[thmcounter]{Definition}
\newtheorem{example}[thmcounter]{Example}
\newtheorem{remark}[thmcounter]{Remark}
\newtheoremstyle{claim}{9pt}{3pt}{}{\parindent}{\bf}{.}{1em}{}
\theoremstyle{claim}
\newtheorem{claim}[equation]{Claim}
\newenvironment{namelist}[1]{%
\begin{list}{}
{
\settowidth{\labelwidth}{#1}%
\setlength{\labelsep}{0.3em}%
\setlength{\leftmargin}{\labelwidth}%
\addtolength{\leftmargin}{\labelsep}}}{%
\end{list}}
\newcommand{\nZ}{\mathbb{Z}}                     
\newcommand{\nP}{\mathbb{P}}                     
\newcommand{\uP}{\mathbb{P}}                  
\newcommand{\sD}{\mathscr{D}}
\newcommand{\sF}{\mathscr{F}}
\newcommand{\sO}{\mathscr{O}}                    
\newcommand{\sH}{\mathscr{H}}
\newcommand{\sM}{\mathscr{M}}
\newcommand{\sQ}{\mathscr{Q}}
\newcommand{\mf}[1]{\mathfrak{#1}}
\DeclareMathOperator{\Bl}{Bl}                    
\DeclareMathOperator{\bbl}{bl}                    
\DeclareMathOperator{\ev}{ev}					 
\DeclareMathOperator{\id}{id}                    
\DeclareMathOperator{\lin}{lin}                  
\DeclareMathOperator{\mult}{mult}                
\DeclareMathOperator{\Pic}{Pic}                  
\DeclareMathOperator{\Sing}{Sing}                
\DeclareMathOperator{\reg}{reg}                  
\DeclareMathOperator{\Tor}{Tor}                  
\DeclareMathOperator{\rank}{rank}                
\newcounter{rkcounter}             
\begin{document}

\title[Secant varieties of nonsingular projective curves]{Singularities and syzygies of secant varieties of nonsingular projective curves}

\author{Lawrence Ein}
\address{Department of Mathematics, University Illinois at Chicago, 851 South Morgan St.,
Chicago, IL 60607, USA}
\email{ein@uic.edu}

\author{Wenbo Niu}
\address{Department of Mathematical Sciences, University of Arkansas, Fayetteville, AR 72701, USA}
\email{wenboniu@uark.edu}

\author{Jinhyung Park}
\address{Department of Mathematics, Sogang University, 35 Beakbeom-ro, Mapo-gu, Seoul 04107, Republic of Korea}
\email{parkjh13@sogang.ac.kr}

\subjclass[2010]{13A10, 14Q20}

\keywords{secant variety, projective curve, symmetric product of a curve, singularities of an algebraic variety, syzygy}

\date{\today}

\thanks{L. Ein was partaily support NSF grant DMS-1801870.}
\thanks{J. Park was partially supported by NRF-2016R1C1B2011446 and the Sogang University Research Grant of 201910002.01.}

\begin{abstract}
In recent years, the equations defining secant varieties and their syzygies have attracted considerable attention. The purpose of the present paper is to conduct a thorough study on secant varieties of curves by  settling several conjectures and revealing interaction between singularities and syzygies. The main results assert that if the degree of the embedding line bundle of a nonsingular curve of genus $g$ is greater than $2g+2k+p$ for nonnegative integers $k$ and $p$, then the $k$-th secant variety of the curve has normal Du Bois singularities, is arithmetically Cohen--Macaulay, and satisfies the property $N_{k+2, p}$. In addition, the singularities of the secant varieties are further classified according to the genus of the curve, and the Castelnuovo--Mumford regularities are also obtained as well. As one of the main technical ingredients, we establish a vanishing theorem on the Cartesian products of the curve, which may have independent interests and may find applications elsewhere.
\end{abstract}

\maketitle

\tableofcontents

\section{Introduction}


\noindent Throughout the paper, we work over an algebraically closed field $\Bbbk$  of characteristic zero. Let 
$$
C \subseteq \nP(H^0(C, L))=\nP^r
$$ 
be a nonsingular projective curve of genus $g\geq 0$ embedded by the complete linear system of a very ample line bundle $L$ on $C$. For an integer $k\geq 0$, the \emph{$k$-th secant variety}
$$
\Sigma_k=\Sigma_k(C, L) \subseteq \nP^r
$$ 
to the curve $C$ is defined to be the Zariski closure of the union of $(k+1)$-secant $k$-planes to $C$ in $\nP^r$. One has the natural inclusions 
$$
C=\Sigma_0 \subseteq \Sigma_1\subseteq \cdots \subseteq \Sigma_{k-1} \subseteq \Sigma_k\subset \nP^r.
$$
If $\deg L \geq 2g+2k+1$, then
$$
\dim \Sigma_k = 2k+1~~\text{ and }~~\Sing(\Sigma_k)=\Sigma_{k-1}.
$$
Note that $\Sigma_{k-1}$ has codimension two in $\Sigma_k$. The geometric consequence of the condition $\deg L\geq 2g+2k+1$ is that any effective divisor on $C$ of degree $k+1$ spans a $k$-plane in $\nP^r$.

There has been a great deal of work on the secant varieties in the last three decades. The major part of the research focused on local properties, defining equations, and syzygies. Recently, classical questions on secant varieties find interesting applications to algebraic statistics and algebraic complexity theory. However, a lot of problems in this area are still widely open, and not much is known about general pictures.
 For the first secant variety of a curve, investigation has been conducted in a series of work by Vermeire \cite{Vermeire:SomeResultSecFlip, Vermeire:RegPowers, Vermeire:RegNormSecCurve, Vermeire:EqSyzSec} and the work with his collaborator Sidman \cite{Vermeire:SyzSecantVarCurves, Vermeire:EqSecGeoComp}. Among other things, the issue whether secant varieties are normal attracted special attention, as normality is critical in establishing many other important properties. Only for the first secant variety, the normality problem was settled by Ullery \cite{Ullery:SecantVar} fairly recently for a nonsingular projective variety of any dimension under suitable conditions on the embedding line bundle. Soon afterwards Chou and Song \cite{Chou:SingSecVar} further showed that the first secant variety has Du Bois singularities under the setting of Ullery's study. 

On the other hand, the classical questions on the projective normality and the defining equations of secant varieties are the initial case of a more general picture involving higher syzygies, under the frame of Green's pioneering work \cite{G:Kosz}. Keeping in mind that the curve can be viewed as its zeroth secant variety, the fundamental \emph{Green's $(2g+1+p)$--theorem} (see \cite{G:Kosz} and \cite{GL:Syz}) asserts that if the embedding line bundle $L$  has  $\deg L \geq 2g+1+p$, then $C \subseteq \nP^r$ is projectively normal and satisfies the property $N_{2,p}$, i.e., the curve is cut out by quadrics and the first $p$ steps of its minimal graded free resolution are linear (see Subsection \ref{subsec:prelim-syz} for relevant definitions on syzygies). This result sheds the lights on understanding the full picture of syzygies of arbitrary order secant varieties.

In this paper, we give a thorough study on singularities and syzygies of the $k$-th secant variety $\Sigma_k$ of the curve $C$ for arbitrary integer $k\geq 0$. The general philosophy guiding our research can be  summarized as that singularities and syzygies interact each other in the way that the singularities of $\Sigma_{k}$ determine its syzygies while the syzygies of $\Sigma_{k-1}$ determine the singularities of $\Sigma_{k}$, and so on and so forth. 
It turns out that all the sufficient conditions that guarantee each basic property of secant varieties are satisfied if the embedding line bundle is positive enough beyond an effective bound. 

The first main result of the paper describes that the possible singularities of secant varieties are mild ones naturally appearing in birational geometry. We refer to Subsection \ref{subsec:prelim-sing} for the definitions of singularities. 

\begin{theorem}\label{main1:singularities}
Let $C$ be a nonsingular projective curve of genus $g$, and $L$ be a line bundle on $C$. For an integer $k \geq 0$, suppose that
$$
\deg L \geq 2g+2k+1.
$$
Then $\Sigma_k=\Sigma_k(C, L)$ has normal Du Bois singularities. Furthermore, one has the following:
	\begin{enumerate}
		\item $g=0$ if and only if $\Sigma_k$ is a Fano variety with log terminal singularities.
		\item $g=1$ if and only if $\Sigma_k$ is a Calabi--Yau variety with log canonical singularities but not log terminal singularities.
		\item $g \geq 2$ if and only if there is no boundary divisor $\Gamma$ on $\Sigma_k$ such that $(\Sigma_k, \Gamma)$ is a log canonical pair.
	\end{enumerate}
\end{theorem}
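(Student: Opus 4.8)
The plan is to base everything on the natural resolution $\beta_k\colon \widetilde{\Sigma}_k := \mathbb{P}_{C_{k+1}}(E_{k+1}) \to \Sigma_k \subseteq \nP^r$, where $C_{k+1}$ is the $(k{+}1)$-st symmetric product of $C$ and $E_{k+1}$ the associated secant bundle (the pushforward of $L$ along the universal divisor). Under the hypothesis $\deg L \ge 2g+2k+1$ this map is birational, an isomorphism over $\Sigma_k\setminus\Sigma_{k-1}$, with $\widetilde{\Sigma}_k$ smooth, $\beta_k^*\sO_{\nP^r}(1)=\sO_{\mathbb{P}(E_{k+1})}(1)$, and reduced exceptional divisor $Z_k=\beta_k^{-1}(\Sigma_{k-1})$ a smooth prime divisor carrying a fibration over $\widetilde{\Sigma}_{k-1}$ whose fibers are copies of $C$; I take these structural facts, together with the normality and Cohen--Macaulayness of $\Sigma_k$, from the earlier sections. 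Two elementary reductions then drive the argument. First, since $\Sing\Sigma_k=\Sigma_{k-1}$ has codimension two, the dualizing sheaf $\omega_{\Sigma_k}$ is the reflexive extension of $\omega$ on $\widetilde{\Sigma}_k\setminus Z_k$, and $\Cl(\Sigma_k)=\Pic(\widetilde{\Sigma}_k)/\nZ[Z_k]$. Second, $\beta_k$ is a log resolution of $(\Sigma_k,0)$ whose only exceptional divisor is $Z_k$, so $(\Sigma_k,0)$ is log terminal (resp.\ log canonical) precisely when the single discrepancy $a(Z_k;\Sigma_k)$ is $>-1$ (resp.\ $\ge-1$), once $K_{\Sigma_k}$ is known to be $\nQ$-Cartier.

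For the normal Du Bois assertion I would induct on $k$, the case $\Sigma_0=C$ being trivial. In the inductive step, since $\beta_k$ is an isomorphism over $\Sigma_k\setminus\Sigma_{k-1}$, the exact triangle of Du Bois complexes attached to the morphism of pairs $(\widetilde{\Sigma}_k,Z_k)\to(\Sigma_k,\Sigma_{k-1})$ relates $\underline{\Omega}^0_{\Sigma_k}$ to $R\beta_{k*}\sO_{\widetilde{\Sigma}_k}$, $\underline{\Omega}^0_{\Sigma_{k-1}}$ and $R\beta_{k*}\sO_{Z_k}$; because $\Sigma_{k-1}$ is Du Bois by induction, $Z_k$ and $\widetilde{\Sigma}_k$ are smooth, and $\Sigma_k$ is normal, this forces the canonical morphism $\sO_{\Sigma_k}\to\underline{\Omega}^0_{\Sigma_k}$ to be a quasi-isomorphism. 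Equivalently, the $\widetilde{\Sigma}_j$ and $Z_j$ for $j\le k$ assemble into a cubical hyperresolution of $\Sigma_k$, and to read off $\underline{\Omega}^0_{\Sigma_k}=\sO_{\Sigma_k}$ one needs the relevant higher direct images to cancel --- which is exactly where a Kodaira--type vanishing on $\widetilde{\Sigma}_k$ pulled back from the Cartesian product $C^{k+1}$ is used.

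The heart of the trichotomy is the number $a(Z_k;\Sigma_k)$. I would compute it by cutting $\Sigma_k$ by $2k-1$ general hyperplanes through a general point $x_0\in\Sigma_{k-1}\setminus\Sigma_{k-2}$: the resulting normal surface singularity $S$ has $\widetilde{S}:=\beta_k^{-1}(S)$ smooth and $\widetilde{S}\to S$ a resolution with a single irreducible exceptional curve $E=\beta_k^{-1}(x_0)\cong C$, so $S$ is $\nQ$-Gorenstein, and a straightforward adjunction computation for $E$ inside $\widetilde{S}$ and inside $\widetilde{\Sigma}_k$ yields
$$
a(Z_k;\Sigma_k)=a(E;S)=-1+\frac{2-2g}{e},\qquad e:=-E^2_{\widetilde S}\ge 1.
$$
(The same value drops out of the explicit formula $\omega_{\widetilde{\Sigma}_k}=\sO_{\mathbb{P}(E_{k+1})}(-(k{+}1))\otimes\pi^*(\omega_{C_{k+1}}\otimes\det E_{k+1})$ together with the class $[Z_k]$.) For $g\ge 2$ this already gives (3): if $\Gamma$ is any boundary with $K_{\Sigma_k}+\Gamma$ $\nQ$-Cartier then, restricting to the slice, $a\bigl(E;(S,\Gamma|_S)\bigr)\le a(E;S)<-1$, so $(\Sigma_k,\Gamma)$ is not log canonical; and if $K_{\Sigma_k}+\Gamma$ is not $\nQ$-Cartier, $(\Sigma_k,\Gamma)$ is not even a pair. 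Since, conversely, $\Gamma=0$ exhibits a log canonical pair when $g\le 1$ by (1) and (2), the three cases are mutually exclusive and all three "only if" directions follow at once.

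When $g=0$ we have $C_{k+1}=\nP^{k+1}$, so $\Cl(\Sigma_k)=\Pic(\widetilde{\Sigma}_k)/\nZ[Z_k]$ has rank one and is spanned rationally by the Cartier, very ample class $\sO_{\Sigma_k}(1)$; hence every Weil divisor on $\Sigma_k$ is $\nQ$-Cartier, in particular $\Sigma_k$ is $\nQ$-Gorenstein and $-K_{\Sigma_k}\equiv c\,\sO_{\Sigma_k}(1)$ for some $c\in\nQ$. Computing $c$ by intersecting $-K_{\widetilde{\Sigma}_k}$ with a suitable non-contracted curve and using $K_{\widetilde{\Sigma}_k}=\beta_k^*K_{\Sigma_k}+a(Z_k;\Sigma_k)Z_k$ gives $c>0$ (for instance $c=4/(\deg L-2)$ when $k=1$, which recovers $-K_{\nP^3}=\sO(4)$ at $\deg L=3$), so $\Sigma_k$ is Fano; and it is log terminal because on the log resolution $\beta_k$ the sole discrepancy $-1+2/e$ is $>-1$. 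When $g=1$ the divisor class group is no longer of rank one, and instead I would verify, from the explicit formulas for $\omega_{\widetilde{\Sigma}_k}$ and $[Z_k]$, that $K_{\widetilde{\Sigma}_k}+Z_k\sim_{\nQ}0$; pushing forward then gives $K_{\Sigma_k}\sim_{\nQ}0$, so $\Sigma_k$ is Calabi--Yau, it is log canonical since the discrepancy $a(Z_k;\Sigma_k)=-1$ is $\ge -1$, and it fails to be log terminal since this discrepancy equals $-1$. Together with the converses just noted, this completes (1)--(3). The step I expect to be the main obstacle is precisely this last global input: computing $\omega_{C_{k+1}}$ and $\det E_{k+1}$ --- hence $\omega_{\widetilde{\Sigma}_k}$ --- and the class $[Z_k]$ sharply enough, with the correct theta-divisor and diagonal contributions, to pin down $K_{\Sigma_k}$, to establish $\nQ$-Gorensteinness in each case, and to separate $c>0$ from $K_{\Sigma_k}\sim_{\nQ}0$; and, in running the induction on $k$, controlling $\beta_k$ and $Z_k$ uniformly over all of $\Sigma_{k-1}$ and along the deeper secant strata, where the transverse slices are modelled on lower secant varieties.
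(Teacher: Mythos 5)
Your outline correctly identifies the resolution $\beta_k\colon \nP(E_{k+1,L})\to\Sigma_k$, the key relation $K_{B^k(L)}+Z_{k-1}\sim_{\lin}\pi_k^*T_{k+1}(K_C)$, and the discrepancy value $-1+(2-2g)/e$ obtained by restricting to a contracted fiber $\cong C$ (your surface-slice computation agrees with what the paper gets by restricting to $F_x$, with $e=\deg L-2k$). The class-group argument for $\nQ$-Gorensteinness when $g=0$ and the non-$\nQ$-Cartier dichotomy for $g\geq 2$ are also sound. But the proposal rests on a false structural premise: for $k\geq 2$ the exceptional divisor $Z_{k-1}=\beta_k^{-1}(\Sigma_{k-1})$ is \emph{not} smooth. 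Its fiber over a general $\xi\in C_{k+1}$ is a union of $k+1$ hyperplanes in $\nP^k$, so $Z_{k-1}$ is singular along $Z_{k-2}$; it is only for $k=1$ that $Z_0\cong C\times C$ is smooth. This breaks two load-bearing steps. First, your Du Bois induction (essentially Koll\'ar's criterion) needs $Z_{k-1}$ to be Du Bois, which you get for free only because you assumed it smooth; the paper has to earn this by producing Bertram's iterated blowup $b_k\colon \bbl_k(B^k(L))\to B^k(L)$, showing it is a log resolution of $(B^k(L),Z_{k-1})$ with $K_{\bbl_k(B^k(L))}=b_k^*(K_{B^k(L)}+Z_{k-1})-\sum E_i$ and $b_k^*Z_{k-1}=\sum(k-i)E_i$, whence $(B^k(L),Z_{k-1})$ is log canonical and $Z_{k-1}$ is semi-log canonical, hence Du Bois. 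Second, $\beta_k$ is therefore \emph{not} a log resolution of $(\Sigma_k,0)$, so your reduction ``klt/lc $\Leftrightarrow$ the single discrepancy $a(Z_{k-1};\Sigma_k)$ is $>-1$/$\geq-1$'' is unjustified: knowing the discrepancy of one exceptional divisor on a non-log resolution does not bound the discrepancies of divisors extracted by further blowups. One must control all of $E_0,\dots,E_{k-1}$ on $\bbl_k(B^k(L))$ (which the displayed formulas do), or argue as the paper does via Fano-type descent \cite{FG} for $g=0$ and via $K_{B^k(L)}+Z_{k-1}\sim 0$ plus the log resolution for $g=1$.

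Two smaller points. The ``if'' directions of (1) and (2) are not addressed by your discrepancy computation at all: to show that klt (resp.\ lc) for some boundary forces $g=0$ (resp.\ $g\leq 1$), the paper restricts $K_{\Sigma_k}+\Gamma$ to the contracted fiber $F_x\cong C$ to make $-K_C$ effective, and for the klt case invokes rational chain-connectedness of fibers \cite{HM}; your mutual-exclusivity argument only covers the $g\geq 2$ ``only if'' direction and does not rule out, say, a klt structure when $g=1$. Finally, normality and the cohomological input $R^i\beta_{k,*}\sO_{B^k(L)}(-Z_{k-1})=0$ ($i>0$), $\beta_{k,*}\sO_{B^k(L)}(-Z_{k-1})=I_{\Sigma_{k-1}|\Sigma_k}$, are not ``earlier sections'' here --- they are proved simultaneously with this theorem by a delicate induction on $k$ (normality at a point of $\Sigma_m\setminus\Sigma_{m-1}$ reduces, via the formal function theorem and the splitting $N^*_{F_x/B^k(L)}\cong\sO^{\oplus 2m+1}\oplus E_{k-m,L(-2\xi)}$, to projective normality of lower secant varieties), so quoting them as known input makes the logical structure circular as written.
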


\noindent The theorem therefore completely solves the normality problems mentioned above (see Ullery's conjecture \cite[Conjecture E]{Ullery:Thesis}), and answers Chou--Song's question \cite[Question 1.6]{Chou:SingSecVar} for curves. 

The second main result gives a description on syzygies of the $k$-th secant variety. It reveals one full  picture hiding in the Green's $(2g+1+p)$--theorem aforementioned.

\begin{theorem}\label{main2:syzygies}
	Let $C \subseteq \nP(H^0(C, L))=\nP^r$ be a nonsingular projective curve of genus $g$ embedded by the complete linear system of a very ample line bundle $L$ on $C$. For integers $k,p \geq 0$, suppose that
	$$
	\deg L \geq 2g+2k+1+p.
	$$
	Then one has the following:
	\begin{enumerate}
		\item $\Sigma_k=\Sigma_k(C, L) \subseteq \nP^r$ is arithmetically Cohen--Macaulay.
		\item $\Sigma_k \subseteq \nP^r$  satisfies the property $N_{k+2, p}$.
		\item $\reg(\sO_{\Sigma_k})=2k+2$ unless $g=0$, in which case $\reg(\sO_{\Sigma_k})=k+1$.
		\item $h^0(\omega_{\Sigma_k})=\dim K_{r-2k-1, 2k+2}(\Sigma_k, \sO_{\Sigma_k}(1))={g+k \choose k+1}$.
	\end{enumerate}
\end{theorem}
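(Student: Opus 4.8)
The plan is to reduce Theorem~\ref{main2:syzygies} to cohomology computations on a secant-bundle resolution of $\Sigma:=\Sigma_k(C,L)$ and, ultimately, on Cartesian powers of $C$. Let $C_{k+1}$ be the $(k+1)$-st symmetric product of $C$ and $E=E_{k+1,L}$ the rank $k+1$ bundle on $C_{k+1}$ obtained by pushing forward $L$ along the universal degree-$(k+1)$ divisor; since $\deg L\ge 2g+2k+1$, the evaluation $H^0(C,L)\otimes\sO_{C_{k+1}}\to E$ is surjective and $H^0(C_{k+1},E)=H^0(C,L)$, so it defines a morphism $\beta\colon B:=\nP(E)\to\nP^r$ with image $\Sigma$, a resolution of singularities with $\beta^*\sO_{\nP^r}(1)=\sO_B(1)$. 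Because $\Sigma$ is normal by Theorem~\ref{main1:singularities}, $\beta_*\sO_B=\sO_\Sigma$; hence $H^0(\Sigma,\sO_\Sigma(m))=H^0(C_{k+1},\Sym^m E)$ and, more to the point, the Koszul complexes computing the graded Betti numbers of $\Sigma\subseteq\nP^r$ can be transported --- via $\beta$ and the projection $B\to C_{k+1}$ --- to complexes of explicit bundles on $C_{k+1}$ (symmetric powers of $E$, the kernel bundle $M_L:=\ker(H^0(C,L)\otimes\sO_B\to\sO_B(1))$, and their mixtures). The only correction terms are the higher direct images $R^{>0}\beta_*\sO_B$: since $\beta$ is an isomorphism off $\Sigma_{k-1}$ and its fibres over $\Sigma_{k-1}$ are symmetric products of $C$, these are supported on $\Sigma_{k-1}$ and, by the recursive structure of the secant construction, are controlled by the corresponding data for $\Sigma_{<k}$. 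This gives a double induction, on $k$ and on $p$ --- matching the philosophy that the syzygies of $\Sigma_{k-1}$ govern the geometry of $\Sigma_k$ --- whose base case $k=0$ for part (2) is exactly Green's $(2g+1+p)$-theorem recalled in the introduction.

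The computational engine is a vanishing theorem on the Cartesian power $C^{k+1}$. For the quotient $\pi\colon C^{k+1}\to C_{k+1}$ by $\mathfrak S_{k+1}$ one has $H^i(C_{k+1},\mathcal F)=H^i(C^{k+1},\pi^*\mathcal F)^{\mathfrak S_{k+1}}$, and the bundles on $C_{k+1}$ above pull back to sheaves on $C^{k+1}$ that are filtered with graded pieces of the form $\bigotimes_a\mathrm{pr}_a^*M_a$ twisted by the reduced pairwise diagonals $\Delta_{ab}$. So everything reduces to the assertion that, once each $\deg M_a\ge 2g+1$, these twisted external products are cohomologically concentrated in the expected degrees; the slack in $\deg L\ge 2g+2k+1+p$ is precisely what secures this, since after the other $k$ points absorb their share and the diagonals are subtracted, each marked point still sees degree at least $2g+1+p$, with the extra $p$ producing the linear strands. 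I would prove the vanishing theorem by induction on the number of factors, using the projection that forgets one coordinate $C^{k+1}\to C^k$ and peeling the diagonals $\Delta_{a,k+1}$ off one at a time via the sequences $0\to\sO(-\Delta_{a,k+1})\to\sO\to\sO_{\Delta_{a,k+1}}\to 0$, which decreases the degree available on the remaining factors in a controlled way; the one-factor base case is the classical $H^1(C,M)=0$ for $\deg M\ge 2g-1$.

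Granting this, parts (1) and (3) follow. Projective normality is the surjectivity of $H^0(\nP^r,\sO(m))\to H^0(\Sigma,\sO_\Sigma(m))$, which reduces on $C_{k+1}$ to the vanishing of an $H^1$ of a twisted bundle; arithmetic Cohen--Macaulayness is then the vanishing $H^i(\Sigma,\sO_\Sigma(m))=0$ for $0<i<2k+1$ and all $m\in\nZ$. For $m\ge 0$ this is the vanishing theorem on $B$, together with the fact that $R^{>0}\beta_*\sO_B$ --- described inductively above --- contributes nothing in this range; for $m<0$ it is Kodaira-type vanishing for the Du Bois variety $\Sigma$ with the ample $\sO_\Sigma(1)$ (Theorem~\ref{main1:singularities}), or again a direct computation through $B$ using Serre duality. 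For part (3) it remains to pin down the top cohomology: by Serre duality on the Cohen--Macaulay variety $\Sigma$, $H^{2k+1}(\Sigma,\sO_\Sigma(m))\cong H^0(\Sigma,\omega_\Sigma(-m))^{\vee}$, and the computation through $B$ shows this is nonzero exactly for $m\le 0$ when $g\ge1$ (as $H^0(\omega_\Sigma)\ne0$ then) and exactly for $m\le -(k+1)$ when $g=0$ (where one may instead use that $\Sigma$ is Fano and read off its index); hence $\reg(\sO_\Sigma)=2k+2$, respectively $k+1$.

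For part (2), the property $N_{k+2,p}$ translates --- given projective normality --- into the vanishing of the Koszul cohomology groups $K_{i,j}(\Sigma,\sO_\Sigma(1))$ for $i\le p$ in the appropriate range of $j$, plus generation of the homogeneous ideal in degrees $\le k+2$; transporting the Koszul complex to $B$ as in paragraph one, this vanishing is precisely an instance of the vanishing theorem on $C^{k+1}$, the bound $\deg L\ge 2g+2k+1+p$ producing exactly the first $p$ linear strands. For part (4), having established in part (1) that $\Sigma$ is arithmetically Cohen--Macaulay of dimension $2k+1$ and codimension $r-2k-1$ with $\reg(\sO_\Sigma)=2k+2$, the standard duality for the minimal free resolution of an arithmetically Cohen--Macaulay coordinate ring --- the dual complex resolves the canonical module $\bigoplus_m H^0(\Sigma,\omega_\Sigma(m))$, which the regularity forces to be generated in degree zero --- identifies the extremal graded Betti number $K_{r-2k-1,2k+2}(\Sigma,\sO_\Sigma(1))$ with $H^0(\Sigma,\omega_\Sigma)^{\vee}$. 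It then remains to compute $h^0(\omega_\Sigma)=h^{2k+1}(\Sigma,\sO_\Sigma)$ from the Leray spectral sequence of $\beta$ and the inductive description of the sheaves $R^q\beta_*\sO_B$; the resulting binomial sum should collapse to $\binom{g+k}{k+1}$, consistently with $h^0(\omega_\Sigma)=0$ for $g=0$ and $h^0(\omega_\Sigma)=1$ for $g=1$ as in Theorem~\ref{main1:singularities}. The technical heart, and the step I expect to be the real obstacle, is the vanishing theorem on Cartesian products: pinning down its sharp numerical hypotheses and the precise range of concentration while carrying along all the diagonal twists, and then identifying which diagonal-twisted external products actually occur after descent along $C^{k+1}\to C_{k+1}$. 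A close second is the control of $R^{>0}\beta_*\sO_B$: since $\Sigma$ is not rational once $g\ge1$, these sheaves do not vanish, and the induction on $k$ must show they never contaminate the cohomology or Koszul groups in the ranges that matter --- the concrete content of ``the syzygies of $\Sigma_{k-1}$ control the singularities of $\Sigma_k$''.
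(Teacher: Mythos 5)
Your top-level architecture matches the paper's: resolve $\Sigma_k$ by the secant bundle $B^k(L)=\nP(E_{k+1,L})$, push the Koszul-cohomology computation down to $C_{k+1}$ and then up to $C^{k+1}$, prove a diagonal-twisted vanishing theorem there by induction on the number of factors (with base case $H^1(C,M)=0$ for $\deg M\geq 2g-1$), and run a double induction on $k$ in which projective normality of lower secant varieties feeds back into the geometry of $\Sigma_k$. But there is a genuine gap at the central mechanism. You propose to transport the Koszul complexes through $\beta$ and then ``control'' the correction terms $R^{>0}\beta_*\sO_{B^k(L)}$, conceding at the end that these sheaves do not vanish for $g\geq 1$ and that you have no concrete plan for showing they ``never contaminate'' the relevant groups. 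The paper never attempts this. Its key idea --- which is absent from your proposal --- is to twist by the relative secant divisor $Z_{k-1}\subseteq B^k(L)$ (the preimage of $\Sigma_{k-1}$): one proves the \emph{Du Bois type condition} $R^i\beta_*\sO_{B^k(L)}(-Z_{k-1})=0$ for $i>0$ and $\beta_*\sO_{B^k(L)}(-Z_{k-1})=I_{\Sigma_{k-1}|\Sigma_k}$ (by the formal function theorem along the fibers $C_{k-m}$, reducing to projective normality of $\Sigma_{k-m-1}(C,L(-2\xi))$), and then works with the exact sequence $0\to I_{\Sigma_{k-1}|\Sigma_k}\to\sO_{\Sigma_k}\to\sO_{\Sigma_{k-1}}\to 0$, so that all cohomology of $\sO_{\Sigma_k}$ splits into a piece computable on $B^k(L)$ and a piece known by induction on $k$. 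The companion fact that makes the descent to $C_{k+1}$ work is $\sO_{B^k(L)}((k+1)H-Z_{k-1})=\pi_k^*A_{k+1,L}$ with $A_{k+1,L}=T_{k+1}(L)(-2\delta_{k+1})$ a line bundle on $C_{k+1}$; without the $-Z_{k-1}$ twist there is no such statement, and your reduction of the Koszul groups to ``symmetric powers of $E$ and mixtures'' on $C_{k+1}$ does not go through as written.

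Two further points would fail as stated. First, your descent $H^i(C_{k+1},\sF)=H^i(C^{k+1},q^*\sF)^{\mathfrak{S}_{k+1}}$ captures only the invariant summand, which produces $N_{k+1,L}=T_{k+1}(L)(-\delta_{k+1})$-twisted groups; the groups actually needed carry the extra twist by $-\delta_{k+1}$ (i.e.\ $A_{k+1,L}$), and one must use that $\sO_{C_{k+1}}(-\delta_{k+1})$ is a direct summand of $q_{k+1,*}\sO_{C^{k+1}}$ (the paper's Lemma \ref{diagonal}) to extract them from $\wedge^jQ_{k+1,L}\otimes L^{\boxtimes k+1}(-\Delta_{k+1})$ on $C^{k+1}$. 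Second, for arithmetic Cohen--Macaulayness the negative twists are not a formal consequence of Du Bois plus ``Kodaira-type vanishing'': the Du Bois property only stabilizes $h^i(\sO_{\Sigma_k}(-\ell))$ for $\ell\geq 1$, and since $H^{2k-1}(\sO_{\Sigma_{k-1}}(-\ell))$ and $H^{2k}(I_{\Sigma_{k-1}|\Sigma_k}(-\ell))$ are both nonzero for $g\geq 1$, one must prove the connecting map between them is an isomorphism (Claim \ref{claim:acm1}(b) in the paper, via an explicit diagram on $C_k\times C$ and Lemma \ref{K-vanishing}); your proposal does not address this cancellation, which is exactly where the middle cohomology could fail to vanish.
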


\noindent The results in  the theorem were conjectured by Sidman--Vermeire (\cite[Conjecture 1.3]{Vermeire:SyzSecantVarCurves},  \cite[Conjectures 5 and 6]{Vermeire:RegNormSecCurve})). The conjectures were quite wide open. For $g\leq 1$, the conjectures were settled by Graf von Bothmer--Hulek \cite{GBH} and Fisher \cite{Fish}. By work of Vermeire \cite{Vermeire:RegPowers, Vermeire:RegNormSecCurve, Vermeire:EqSyzSec}, Sidman--Vermeire \cite{Vermeire:SyzSecantVarCurves}, and Yang \cite{Yang:Letter}, the question about $N_{3,p}$ was finally settled for the first secant variety $\Sigma_1$. 

Theorem \ref{main2:syzygies} gives a complete picture for syzygies of arbitrary order secant varieties of curves. If $\deg L\geq 2g+2k+1$, then $\Sigma_k \subseteq \nP^r$ is indeed projectively normal. If $\deg L \geq 2g+2k+2$, then $\Sigma_k$ is ideal-theoretically cut out by the hypersurfaces of degree $k+2$, as it cannot be contained in a smaller degree hypersurface. Furthermore, if $\deg L \geq 2g+2k+1+p$, then the first $p$ steps of the minimal graded free resolution of  $\Sigma_k$ are linear.

We mention here several quick examples to show that the degree bounds on the line bundle $L$ in the theorems are optimal. (i) Assume $C$ has genus $g=4$ and take general points $p$, $q$, $r$, and $s$ on $C$. The line bundle  $L=\omega_C(p+q+r+s)$ embeds $C$ in $\nP^{g+2}$. Then the first secant variety $\Sigma_1$ is neither normal nor Cohen--Macaulay. See Example \ref{Ex:non-normal} for non-normal higher secant varieties $\Sigma_k$ with $k \geq 2$.  (ii) If $C$ is an elliptic curve and $\deg L=2k+3$, then the $k$-th secant variety $\Sigma_k$ in $\nP^{2k+2}$ is a hypersurface of degree $2k+3$. (iii) If $C$ has genus $2$ and degree $12$ in $\nP^{10}$, then $\Sigma_{1}$ satisfies $N_{3,5}$ but fails $N_{3,6}$, and  $\Sigma_{2}$ satisfies $N_{4,3}$ but fails $N_{4,4}$. The last two examples are taken from \cite{GBH} and \cite{Vermeire:EqSecGeoComp}, and one may find more examples there.

To prove the main results of the paper, we utilize Bertram's construction \cite{Bertram:ModuliRk2} to realize secant varieties as the images of projectivized vector bundles.  To be more precise, we consider the $k$-th symmetric product $C_{k+1}$ of $C$. We have a canonical morphism $\sigma_{k+1} \colon C_k \times C \to C_{k+1}$ defined by sending $(x,\xi)$ to $x+\xi$ and the projection $p \colon C_k \times C \to C$.
One defines the secant sheaf
$$
E_{k+1,L}:=\sigma_{k+1,*}(p^*L),
$$ 
which is a vector bundle on $C_{k+1}$ of rank $k+1$, and the secant bundle  
$$
B^{k}(L):=\nP(E_{k+1,L}).
$$
Notice that $E_{k+1, L}$ parameterizes $(k+1)$-secant $k$-planes, i.e., the fiber of $E_{k+1,L}$ over $\xi \in C_{k+1}$ can be identified with $H^0(\xi, L|_{\xi})$. The complete linear system of the tautological line bundle of $B^k(L)$ determines a natural morphism to the projective space $\nP^r$ such that the image is $\Sigma_k$. It gives rise to 
a resolution of singularities
$$
\beta \colon B^k(L) \longrightarrow \Sigma_k.
$$
We then consider the $(k-1)$-th relative secant variety $Z_{k-1}$, which is actually a divisor in the smooth variety $B^k(L)$. Our strategy is to pass computation for codimension two situation  $\Sigma_{k-1}\subseteq \Sigma_k$ to the codimension one situation $Z_{k-1}\subseteq B^k(L)$. The picture for the first secant variety is rather simple, and $Z_0$ is just $C\times C$. Thus one can easily transfer cohomological computation from $\Sigma_1$ to $C_2$ through $B^1(L)$. However, for higher secant varieties, such method does not work directly in that $Z_{k-1}$ is singular. Fortunately, after blowup consecutively along the stratification induced by the inclusions
$C \subseteq \Sigma_1\subseteq \Sigma_2\subseteq \cdots \subseteq \Sigma_{k-1}$, as exhibited in \cite{Bertram:ModuliRk2}, we then arrive at a birational morphism 
$$
b_k \colon \bbl_k(B^k(L)) \longrightarrow B^k(L),
$$ 
which we prove to be a log resolution of the log pair $(B^k(L), Z_{k-1})$. Based on this setup, in Theorem \ref{main1:singularities}, for instance, to prove the normality of $\Sigma_k$ at a point $x$, we adapt the strategy of Ullery in \cite{Ullery:SecantVar} to consider the unique minimal $m$-secant plane containing $x$.  It cuts the curve along a degree $m+1$ divisor $\xi$. By the formal function theorem, the normality of the $k$-th secant variety $\Sigma_k$ at $x$ follows from the normality and projective normality of the smaller order secant variety $\Sigma_{k-m-1}$ in the space $\nP(H^0(C, L(-2\xi)))$. This leads us to study a general question on the property $N_{k+2,p}$ or higher syzygies of $\Sigma_k$.

Turning to the proof of Theorem \ref{main2:syzygies}, we assume $\deg L\geq 2g+2k+1+p$, and consider the kernel bundle $M_{\Sigma_k}$ in the exact sequence 
$$
0 \longrightarrow M_{\Sigma_k} \longrightarrow H^0(C, L) \otimes \sO_{\Sigma_k} \longrightarrow \sO_{\Sigma_k}(1) \longrightarrow 0,
$$
induced by the evaluation map on the global sections of $\sO_{\Sigma_k}(1)$.
The critical observation we made here is that in order to establish the property $N_{k+2, p}$, one only needs cohomology vanishing involving the wedge product of $M_{\Sigma_k}$ tensored with $I_{\Sigma_{k-1}|\Sigma_k}(k+1)$. More precisely, it is sufficient to show the following cohomology vanishing
\begin{equation}\label{intro-eq1}
H^i(\Sigma_k, \wedge^j M_{\Sigma_k} \otimes I_{\Sigma_{k-1}|\Sigma_k}(k+1))=0~\text{ for $i \geq j-p$, $i \geq 1$, $j \geq 0$}.
\end{equation}
The next important technical step is to prove the following \emph{Du Bois type conditions}:
\begin{equation}\label{DB_cond}
R^i \beta_* \sO_{\Sigma_k}(k+1)(-Z_{k-1}) = 
\begin{cases}
I_{\Sigma_{k-1}|\Sigma_k} & \text{for $i=0$,}\vspace{0.2cm}\\
0 & \text{for $i>0$}.
\end{cases}
\end{equation}
Then the cohomology groups in (\ref{intro-eq1}) can be calculated on $B^k(L)$ by involving the sheaf $\beta^* \sO_{\Sigma_k}(k+1)(-Z_{k-1})$. We observe that in fact this sheaf is the pullback of a line bundle $A_{k+1, L}$ on the symmetric product $C_{k+1}$ of the curve $C$. Therefore, once we use the exact sequence
$$
0 \longrightarrow M_{k+1, L} \longrightarrow H^0(C, L) \otimes \sO_{C_{k+1}} \longrightarrow E_{k+1, L} \longrightarrow 0,
$$
induced by the evaluation map on the global sections of $E_{k+1,L}$, we are able to further connect vanishing (\ref{intro-eq1}) with the following cohomological vanishing
\begin{equation}\label{intro-eq2}
H^i(C_{k+1}, \wedge^{j} M_{k+1, L} \otimes A_{k+1, L})=0~\text{ for $i \geq j-p$, $i \geq 1$, $j \geq 0$},
\end{equation}
on the symmetric product $C_{k+1}$. 
As the final ingredient of the proof, inspired by Rathmann's vanishing results in \cite{Rathmann}, we show the following vanishing
\begin{equation}\label{intro-eq3}
H^i\big(C^{k+1}, \wedge^j q^*M_{k+1, L} \otimes (\underbrace{L \boxtimes \cdots \boxtimes L}_{k+1~\text{times}}) (-\Delta) \big)=0~\text{ for $i \geq j-p$, $i \geq 1$, $j \geq 0$},
\end{equation}
on the Cartesian product $C^{k+1}$ of the curve $C$, where $q \colon C^{k+1} \to C_{k+1}$ is the natural quotient map and $\Delta$ is the sum of all pairwise diagonals.
Now, (\ref{intro-eq3}) implies (\ref{intro-eq2}), and hence, we finally obtain (\ref{intro-eq1}).
The vanishing result (\ref{intro-eq3}) may have independent interests, and we hope that it will find other applications somewhere in the future.

\medskip

This paper is organized as follows. 
We begin in Section \ref{sec:prelim1} with recalling basic definitions and properties of singularities and syzygies of algebraic varieties. In Section \ref{sec:prelim2}, we introduce several vector bundles on symmetric products of curves, review Bertram's blowup constructions for secant bundles, and show some useful results for the main results of the paper.
In Section \ref{sec:vanishing}, one of the main technical ingredients, a vanishing theorem on the Cartesian products of curves, is established. Section \ref{sec:prop} is then devoted to the proofs of the main results of the paper. Finally, we discuss some open problems on secant varieties in Section \ref{sec:problem}.

\medskip
\noindent {\em Acknowledgment}. The authors would like to thank Robert Lazarsfeld for helpful suggestions and useful comments. The authors also wish to express their gratitude to Adam Ginensky for bringing the problems considered in this paper to our attention and to J\"{u}rgen Rathmann for his work in the paper \cite{Rathmann}.
The authors are very grateful to the referee for careful reading of the paper and valuable suggestions to help improve the exposition of the paper.


\section{Preliminaries}\label{sec:prelim1}

\noindent We recall relevant definitions and properties of singularities and syzygies of algebraic varieties.

\subsection{Singularities}\label{subsec:prelim-sing}
The Deligne-Du Bois complex $\underline{\Omega}_X^{\bullet}$ for a singular variety $X$ is a generalization of the de Rham complex for a nonsingular variety (see \cite[Chapter 6]{K} for detail). There is a natural map
$$
\sO_X \longrightarrow \underline{\Omega}_X^{0}=Gr_{\text{filt}}^0 \underline{\Omega}_X^{\bullet}.
$$
We say that $X$ has \emph{Du Bois} singularities if the above map is a quasi-isomorphism. 

Let $X$ be a normal projective variety, and $\Delta$ be a boundary divisor on $X$ so that $K_X + \Delta$ is $\mathbb{Q}$-Cartier. Take a log resolution $f \colon Y \to X$ of the pair $(X, \Delta)$.
We may write
$$
K_Y = f^*(K_X + \Delta) + \sum_{E:\text{ prime divisor on $Y$}} a(E; X, \Delta) E,
$$
where $a(E; X, \Delta)$ is the discrepancy of the prime divisor $E$ over $X$. It is easy to check that the discrepancy is independent of the choice of log resolutions.
We say that $(X, \Delta)$ is a \emph{klt} (resp. \emph{log canonical}) pair if $a(E; X,\Delta) > -1$ (resp. $a(E; X,\Delta) \geq -1$) for every prime divisor $E$ over $X$. We say that $X$ has \emph{log terminal} (resp. \emph{log canonical}) singularities if $(X, 0)$ is a klt (resp. log canonical) pair. 
Note that log terminal singularities are rational singularities and (semi-)log canonical singularities are Du Bois singularities. We refer to \cite{K} for more details of the various notions of singularities and log pairs.

\subsection{Syzygies}\label{subsec:prelim-syz}
Let $X \subseteq \nP(H^0(X, L)) = \nP^r $
be a projective variety embedded by the complete linear system of a very ample line bundle $L$ on $X$. Let $S$ be the homogeneous coordinate ring of $\nP^r$, and
$$
R=R(X, L):=\bigoplus_{m \geq 0} H^0(X, mL)
$$
be the graded section ring associated to $L$, viewed as an $S$-module. Then $R$ has a minimal graded free resolution $E_\bullet(X, L)$:
 \[
 \xymatrix{
 0 & R \ar[l]& \bigoplus S(-a_{0,j}) \ar[l]  \ar@{=}[d]& \bigoplus S(-a_{1,j})  \ar[l] \ar@{=}[d]& \ar[l] \cdots & \ar[l] \bigoplus S(-a_{r,j}) \ar[l] \ar@{=}[d] &  \ar[l]0 . \\ 
&  & E_0 & E_1 & &E_r }
 \] 
We define the \emph{Koszul cohomology group}
$$
K_{p,q}(X, L) := \Tor_p^S(R, S/S_+)_{p+q},
$$
where $S_+ \subseteq S$ denotes the irrelevant maximal ideal. Then we have
$$
E_p = \bigoplus_{q} K_{p,q}(X, L)  \otimes_{\Bbbk} S(-p-q).
$$
Notice that $X \subseteq \nP^r$ is projectively normal if and only if $K_{0,j}(X, L)=0$ for all $j \geq 1$.
The \emph{Castelnuovo--Mumford regularity} of $R$, denoted by $\reg(R)$, is defined to be the minimal positive integer $q$ such that $K_{p,j}(X, L)=0$ for all $p\geq 0$ and $j\geq q+1$.
We say that $R$ satisfies the \emph{property $N_{d, p}$} for some integer $d \geq 2$ if 
$$
K_{i,j}(X, L)=0~\text{ for $i \leq p$ and $j \geq d$}.
$$
Assume that $X \subseteq \nP^r$ is projectively normal. Then $R$ is the homogeneous coordinate ring of $X$ so that $R$ satisfies the property $N_{d,p}$ if and only if $X \subseteq \nP^r$ satisfies the property $N_{d,p}$ in the sense of \cite{EGHP}.
In this case, it satisfies the property $N_{d,1}$ if and only if the defining ideal of $X$ in $\nP^r$ is generated in degrees $\leq d$. 
In general, the property $N_{d,p}$ means that up to $p$ stage, the $i$-th syzygy of the minimal graded free resolution $E_\bullet(X, L)$ is generated in degrees $\leq i-1+d$. 

Consider now the evaluation map
$$
\ev \colon H^0(X, L)\otimes \sO_{X} \longrightarrow L,
$$ 
which is surjective since $L$ is base point free. Denote by $M_{L}$ the kernel sheaf of the map $\ev$, then one obtains a short exact sequence of vector bundles
$$
0\longrightarrow M_{L} \longrightarrow H^0(X, L)\otimes \sO_{X} \stackrel{\ev}{\longrightarrow} L\longrightarrow 0.
$$
We use the following result to compute the Koszul cohomology group.

\begin{proposition}[{cf. \cite[Proposition 3.2]{EL:AsySyz}}]\label{koszh1}
Assume that $H^i(X, L^m)=0$ for $i >0$ and $m >0$. Then one has
$$
K_{p,q}(X, L)=H^1(X, \wedge^{p+1}M_L \otimes L^{q-1})~\text{ for $q \geq 2$}.
$$
\end{proposition}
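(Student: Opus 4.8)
The plan is to compute both sides of the identity through the exterior powers of the kernel bundle $M_L$, using only its defining exact sequence together with the stated cohomology vanishing. First I recall the standard description of Koszul cohomology: since $L$ is very ample and $X$ is embedded by the complete linear system $|L|$, one has $V:=H^0(X,L)=S_1$, so tensoring the Koszul resolution $\wedge^{\bullet}V\otimes S$ of $S/S_+$ by $R=R(X,L)$ and taking the degree-$(p+q)$ part exhibits $K_{p,q}(X,L)$ as the cohomology at the middle term of
$$
\wedge^{p+1}V\otimes H^0(X,L^{q-1})\ \xrightarrow{d_{p+1}}\ \wedge^{p}V\otimes H^0(X,L^{q})\ \xrightarrow{d_{p}}\ \wedge^{p-1}V\otimes H^0(X,L^{q+1}),
$$
where $d_{\bullet}$ is the Koszul differential. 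This needs no hypothesis beyond the definition $K_{p,q}(X,L)=\Tor_p^S(R,S/S_+)_{p+q}$.

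Next, from the exact sequence $0\to M_L\to V\otimes\sO_X\to L\to 0$ and the fact that $L$ has rank one, the two-step filtration of $\wedge^a(V\otimes\sO_X)$ determined by the subbundle $M_L$ gives, for every $a\geq 1$, an exact sequence of vector bundles
$$
0\longrightarrow \wedge^a M_L\longrightarrow \wedge^a V\otimes\sO_X\longrightarrow \wedge^{a-1}M_L\otimes L\longrightarrow 0.
$$
I then twist by $L^b$ with $b\geq 1$ and pass to the long exact sequence in cohomology; since $H^1(X,L^b)=0$ by assumption, this reads
$$
0\to H^0(\wedge^a M_L\otimes L^b)\to \wedge^a V\otimes H^0(L^b)\xrightarrow{\delta} H^0(\wedge^{a-1}M_L\otimes L^{b+1})\to H^1(\wedge^a M_L\otimes L^b)\to 0,
$$
so that $H^1(X,\wedge^a M_L\otimes L^b)=\coker\delta$ and $H^0(X,\wedge^a M_L\otimes L^b)=\ker\delta$. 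A local computation — pick a local splitting of $V\otimes\sO_X\to L$ and follow it through the exterior powers — identifies $\delta$, composed with the injection $H^0(\wedge^{a-1}M_L\otimes L^{b+1})\hookrightarrow\wedge^{a-1}V\otimes H^0(L^{b+1})$, with the Koszul differential $d_a$. Consequently $H^0(X,\wedge^a M_L\otimes L^b)=\ker d_a$ for all $a\geq 1$, $b\geq 1$.

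Finally I specialise. With $(a,b)=(p+1,q-1)$, which is permissible because $q\geq 2$ forces $q-1\geq 1$, the displayed exact sequence gives
$$
H^1(X,\wedge^{p+1}M_L\otimes L^{q-1})=\coker\!\big(\wedge^{p+1}V\otimes H^0(L^{q-1})\xrightarrow{\delta} H^0(\wedge^{p}M_L\otimes L^{q})\big),
$$
while $(a,b)=(p,q)$ identifies $H^0(\wedge^{p}M_L\otimes L^{q})$ with $\ker d_p\subseteq\wedge^{p}V\otimes H^0(L^q)$, and under this identification $\delta$ is the restriction of $d_{p+1}$. Hence the right-hand side equals $\ker d_p/\im d_{p+1}=K_{p,q}(X,L)$, which is the assertion.

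The one point that is not purely formal is the compatibility of the connecting homomorphisms $\delta$ with the Koszul differentials $d_a$; this is where a short explicit local calculation is genuinely needed, and it is the crux of the argument, everything else being a routine consequence of long exact cohomology sequences. It is also worth noting that the hypothesis is used only to annihilate $H^1(X,L^{q-1})$ and $H^1(X,L^{q})$ — both twists positive since $q\geq 2$ — so no higher cohomology vanishing actually enters this particular statement.
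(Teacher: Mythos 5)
Your argument is correct and is precisely the standard kernel-bundle computation that the paper invokes by citing \cite[Proposition 3.2]{EL:AsySyz}: identify $K_{p,q}$ as the middle cohomology of the Koszul strand $\wedge^{p+1}V\otimes H^0(L^{q-1})\to\wedge^{p}V\otimes H^0(L^{q})\to\wedge^{p-1}V\otimes H^0(L^{q+1})$, and chase the twisted exact sequences $0\to\wedge^aM_L\to\wedge^aV\otimes\sO_X\to\wedge^{a-1}M_L\otimes L\to 0$. The only quibble is that the step only consumes $H^1(X,L^{q-1})=0$ (the identification $H^0(\wedge^pM_L\otimes L^q)=\ker d_p$ needs no vanishing), so your closing remark slightly overstates what is used; this does not affect the proof.
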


We conclude this section by reviewing Castelnuovo--Mumford regularity for a projective subscheme $X\subseteq \nP^r$. We say that $\sO_X$ (resp. $X\subseteq \nP^r$) is \emph{$m$-regular} if $H^i(X, \sO_X(m-i))=0$ (resp. $H^i(\nP^r, I_{X|\nP^r}(m-i))=0$) for $i > 0$. We say that $X\subseteq \nP^r$ is \emph{$m$-normal} if the natural restriction map $H^0(\nP^r, \sO_{\nP^r}(m)) \to H^0(X, \sO_X(m))$ is surjective.
Note that $X \subseteq \nP^r$ is $(m+1)$-regular if and only if $\sO_X$ is $m$-regular and $X \subseteq \nP^r$ is $m$-normal. 
By Mumford's regularity theorem, if $\sO_X$ (resp. $X\subseteq \nP^r$) is $m$-regular, then so is $(m+1)$-regular. 
We denote by $\reg(\sO_X)$ (resp. $\reg(X)$) the smallest integer $m$ such that $\sO_X$ (resp. $X\subseteq \nP^r$) is $m$-regular. Notice that $\reg(\sO_X)=\reg(R(X, \sO_X(1)))$.
We refer to \cite{Ein:SyzygyKoszul, EL:AsySyz} and \cite{G:Kosz} for more details on syzygies and Koszul cohomology of algebraic varieties.

\section{Symmetric products, secant bundles, and secant varieties}\label{sec:prelim2}

\noindent In this section, we review relevant facts on symmetric products and basic constructions of secant bundles and secant varieties. We also show some useful results on secant bundles, which play important roles in proving the main results of the paper. The reader may also look Bertram's original paper \cite[Sections 1 and 2]{Bertram:ModuliRk2} for more details. 

Throughout the section, we fix a nonsingular projective curve $C$ of genus $g\geq 0$ and a line bundle $L$ on $C$. For an integer $k\geq 1$, we write the $k$-th symmetric product of the curve $C$ as $C_k$ and the $k$-th  Cartesian (or ordinary) product  of the curve $C$ as $C^k$. We set $C^0=C_0=\emptyset$.
Denote by 
$$
q_k \colon C^k\longrightarrow C_k
$$
the quotient morphism from $C^k$ to $C_k$. It is a finite flat surjective morphism of degree $k!$.
We have the canonical morphism
$$
\sigma_{k+1} \colon C_{k} \times C\longrightarrow C_{k+1}
$$
defined by sending $(x,\xi)$ to $x+\xi$. It is a finite flat surjective morphism of degree $k+1$.

\subsection{Lemmas on symmetric products}\label{subsec:symprod}
We begin with defining the secant sheaf on $C_{k+1}$ associated to a line bundle on $C$.

\begin{definition}
For an integer $k\geq 1$, let $p \colon C_k \times C \rightarrow C$ be the projection to $C$.
For a line bundle $L$ on $C$, we define the \emph{secant sheaf} on $C_{k+1}$ associated to $L$ to be 
$$
E_{k+1,L}:=\sigma_{k+1,*}(p^*L)=\sigma_{k+1,*} (\sO_{C_k} \boxtimes L).
$$ 
\end{definition}

Notice that $E_{k+1, L}$ is a locally free sheaf on $C_{k+1}$ of rank $k+1$ and the fiber of $E_{k+1, L}$ over $\xi \in C_{k+1}$ can be identified with $H^0(\xi, L|_{\xi})$.

Next, we introduce several line bundles on the symmetric product $C_{k+1}$, which play a central role in this paper (see also \cite{Ein:Gonality} and \cite{Rathmann} for the importance in the gonality conjecture).

\begin{definition} Let $k \geq 1$ be an integer.
	\begin{enumerate}
		\item Write $L^{\boxtimes k}:=\underbrace{L\boxtimes \cdots \boxtimes L}_{k~\text{times}}=p_1^*L \otimes \cdots\otimes p_k^*L$ on $C^{k}$, 
		where $p_i \colon C^k \to C$ is the projection to the $i$-th component. The symmetric group $\mf{S}_k$ acts on $L^{\boxtimes k}$ in a natural way: $\mu\in \mf{S}_k$ sends a local section $s_1\otimes \cdots \otimes s_k$  to $s_{\mu(1)}\otimes \cdots \otimes s_{\mu(k)}$.
		Then $L^{\boxtimes k}$ is invariant under the action, so descends to a line bundle on $C_k$, denoted by $T_k(L)$.
	\item Define $\delta_{k+1}$ to be a divisor on $C_{k+1}$ such that 
	$\sO_{C_{k+1}}(\delta_{k+1}):=\det \big(\sigma_{k+1,*}(\sO_{C\times C_k})\big)^*.$
	\item Define $N_{k+1,L}:=\det E_{k+1,L}$ on $C_{k+1}$.
	\item Define $ A_{k+1,L} := T_{k+1}(L)(-2\delta_{k+1})$ on $C_{k+1}$.
\end{enumerate}
When $k=0$, we use the convention that $T_1(L)=E_{1,L}=L$ and $\delta_1=0$.
\end{definition}

\begin{remark}
Due to the lack of reference, we list several basic properties of the line bundles defined above.
Those are well known to experts, and are not hard to prove. Let $k \geq 1$ be an integer.
	\begin{enumerate}
		\item $N_{k+1,L}=T_{k+1}(L)(-\delta_{k+1})$.
		\item $H^0(C_{k+1}, T_{k+1}(L))=S^{k+1} H^0(C, L)$ and $H^0(C_{k+1}, N_{k+1}(L))=\wedge^{k+1}H^0(C, L).$
		\item $q_{k+1}^*\sO_{C_{k+1}}(\delta_{k+1}) = \sO_{C^{k+1}}(\Delta_{k+1})$, where $\Delta_{u,v}:=\{ (x_1, \ldots, x_k) \in C^{k+1} \mid x_u=x_v \}$ is the pairwise diagonal on $C^{k+1}$ and $\Delta_{k+1}:=\sum_{1 \leq u < v \leq k+1} \Delta_{u,v}$. When $k=1$, we let $\Delta_1=0$.
		\item $\sigma_{k+1}^*\sO_{C_{k+1}}(\delta_{k+1}) =(\sO_{C_k}(\delta_{k}) \boxtimes \sO_C)(D_k)$, where $D_k$ is the divisor on $C_k \times C$ defined to be the image of the morphism $C_{k-1} \times C \to C_k \times C$ sending $(\xi, p)$ to $(\xi+p, p)$.
		\item $q_k^*T_k(L) =  p^*_1L\otimes \cdots \otimes p^*_kL = L^{\boxtimes k}$. 
		Since $q_{k,*}\sO_{C^k}$  contains $\sO_{C_k}$ as a direct summand, $T_k(L)$ is a direct summand of $q_{k,*}L^{\boxtimes k}$. 
		\item For any two line bundles $L_1$ and $L_2$ on $C$, one has $T_k(L_1)\otimes T_k(L_2)= T_k(L_1\otimes L_2)$.
		
		\item Given a point $p\in C$, the divisor $X_{p}$ on $C_{k+1}$ is defined to be the image of the morphism $C_{k}\rightarrow C_{k+1}$ sending $\xi$ to $\xi+p$.  It is ample, and $\sO_{C_{k+1}}(X_p) = T_{k+1}(\sO_C(p))$. For any line bundle $L$ on $C$, we have $T_{k+1}(L)|_{X_{p}}= T_k(L)$. (See the proof of Lemma \ref{A-restriction}.)
		\item The canonical bundle of $C_{k+1}$ is given by $\omega_{C_{k+1}}= T_{k+1}(\omega_C)(-\delta_{k+1}) = N_{k+1,\omega_C}$.		
	\end{enumerate}
\end{remark}

We now prove some useful lemmas.

\begin{lemma}\label{A-restriction}
Let $k \geq 1, m \geq 0$ be integers.
Fix a degree $m+1$ divisor $\xi_{m+1}$ on $C$, and consider $C_{k-m}$ as a subscheme of $C_{k+1}$ embedded by sending a divisor $\xi$ to $\xi+\xi_{m+1}$. Then one has
$$A_{k+1,L}|_{C_{k-m}}= A_{k-m,L(-2\xi_{m+1})}.$$
\end{lemma}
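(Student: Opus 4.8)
The plan is to reduce the identity $A_{k+1,L}|_{C_{k-m}} = A_{k-m, L(-2\xi_{m+1})}$ to two separate restriction statements, one for the ``symmetric tensor'' line bundle $T_{k+1}(L)$ and one for the diagonal divisor $\delta_{k+1}$, and then combine them. Recall $A_{k+1,L} = T_{k+1}(L)(-2\delta_{k+1})$, so it suffices to prove
\begin{equation*}
T_{k+1}(L)|_{C_{k-m}} = T_{k-m}(L) \quad\text{and}\quad \sO_{C_{k+1}}(\delta_{k+1})|_{C_{k-m}} = T_{m+1}(L)|_{\text{(trivial factor)}}\otimes\sO_{C_{k-m}}(\delta_{k-m})\otimes(\text{correction}),
\end{equation*}
where the correction term accounts for the intersection of $\delta_{k+1}$ with the ``tail'' $\xi_{m+1}$; being a precise bookkeeping of how the pairwise diagonals decompose under the addition map $C_{k-m}\hookrightarrow C_{k+1}$, $\xi\mapsto\xi+\xi_{m+1}$. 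I expect the cleanest route is to handle first the case $m=0$, i.e.\ $\xi_{m+1}=\{p\}$ a single point, using item (7) of the Remark ($\sO_{C_{k+1}}(X_p) = T_{k+1}(\sO_C(p))$ and $T_{k+1}(L)|_{X_p} = T_k(L)$), and then iterate.

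First I would establish the building block $m=0$: with $X_p \subseteq C_{k+1}$ the image of $C_k \to C_{k+1}$, $\xi\mapsto\xi+p$, I claim $A_{k+1,L}|_{X_p} = A_{k,L(-2p)}$. For the $T$-part this is item (7). For the $\delta$-part I need $\sO_{C_{k+1}}(\delta_{k+1})|_{X_p} = \sO_{C_k}(\delta_k)\otimes T_k(\sO_C(p))$; this should follow from item (4) of the Remark (the pullback formula $\sigma_{k+1}^*\sO_{C_{k+1}}(\delta_{k+1}) = (\sO_{C_k}(\delta_k)\boxtimes\sO_C)(D_k)$) by restricting to the slice $C_k \times \{p\}$ and identifying $D_k|_{C_k\times\{p\}}$ with the divisor $X_p'$ on $C_k$ cut out by ``$\xi$ contains $p$'', whose class is $T_k(\sO_C(p))$ again by item (7). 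Combining, $\delta_{k+1}|_{X_p}$ picks up an extra $T_k(\sO_C(p))$, so $-2\delta_{k+1}|_{X_p}$ contributes $T_k(\sO_C(-2p))$, and together with $T_{k+1}(L)|_{X_p}=T_k(L)$ and the multiplicativity in item (6) we get $T_k(L)\otimes T_k(\sO_C(-2p))\otimes\sO_{C_k}(-2\delta_k) = T_k(L(-2p))(-2\delta_k) = A_{k,L(-2p)}$, as wanted.

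Then I would iterate: write $\xi_{m+1} = p_1 + \cdots + p_{m+1}$ (allowing repetitions, choosing effective representatives), so that the embedding $C_{k-m}\hookrightarrow C_{k+1}$ factors as a composition of $m+1$ embeddings of the type $C_{j}\hookrightarrow C_{j+1}$, $\xi\mapsto\xi+p_i$, applied successively with $j$ running from $k$ down to $k-m$. Applying the $m=0$ case $m+1$ times and telescoping the line bundle data gives
\begin{equation*}
A_{k+1,L}|_{C_{k-m}} = A_{k-m,\, L(-2p_1-\cdots-2p_{m+1})} = A_{k-m,\, L(-2\xi_{m+1})},
\end{equation*}
and one checks the result is independent of the chosen representative $p_1+\cdots+p_{m+1}$ of $\xi_{m+1}$ since everything on the right depends only on the linear equivalence class of $\xi_{m+1}$, indeed only on $\xi_{m+1}$ as a divisor (the $p_i$ enter only through their sum). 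The main obstacle I anticipate is the careful identification of the diagonal correction term in the $m=0$ step---i.e.\ checking that $D_k|_{C_k\times\{p\}}$ really is the divisor $X_p'\subseteq C_k$ with class $T_k(\sO_C(p))$, as opposed to something off by a multiple or a residual diagonal component. This is a local computation on symmetric products that I would verify by pulling back to the Cartesian product $C^k$ via $q_k$ (using items (3) and (5) of the Remark, which translate $\delta$ into the total pairwise diagonal $\Delta$ and $T_k(L)$ into $L^{\boxtimes k}$) where the combinatorics of which diagonal components survive the restriction is transparent, and then descending.
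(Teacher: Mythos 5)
Your proposal is correct and follows the same skeleton as the paper's proof: reduce to the single\--point case $A_{k+1,L}|_{X_p}=A_{k,L(-2p)}$ by factoring the embedding $C_{k-m}\hookrightarrow C_{k+1}$ through a chain of one\--point additions, split that case into the two restriction identities $T_{k+1}(L)|_{X_p}=T_k(L)$ and $\delta_{k+1}|_{X_p}=\delta_k+T_k(p)$, and telescope. The one place where you genuinely diverge is the $\delta$-part: you compute $\delta_{k+1}|_{X_p}$ by restricting the pullback formula $\sigma_{k+1}^*\sO_{C_{k+1}}(\delta_{k+1})=(\sO_{C_k}(\delta_k)\boxtimes\sO_C)(D_k)$ to the slice $C_k\times\{p\}$ and identifying $D_k|_{C_k\times\{p\}}$ with the divisor of $\xi$ containing $p$ (verified, as you rightly flag, by pulling the diagonals back to $C^k$ where the multiplicity\--one check is transparent), whereas the paper instead applies adjunction $K_{X_p}=(K_{C_{k+1}}+X_p)|_{X_p}$ together with $\omega_{C_{k+1}}=T_{k+1}(\omega_C)(-\delta_{k+1})$ and $X_p|_{X_p}=T_k(p)$ to extract the same identity. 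Your route is more elementary and self\--contained (pure divisor bookkeeping on Cartesian products, using injectivity of $q_k^*$ on Picard groups); the paper's is shorter because it leverages the known canonical bundle formula for $C_{k+1}$. One small caveat: the identity $T_{k+1}(L)|_{X_p}=T_k(L)$, which you cite from item (7) of the Remark, is in the paper actually proved inside this very lemma (by the same pull\--back\--to\--$C^k$ argument you describe at the end), so in a final write\--up you should supply that two\--line verification rather than cite the Remark, to avoid circularity.
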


\begin{proof} Fix a point $p\in \xi_{m+1}$ so that we can write $\xi_{m+1}=\xi_{m} +p$ for some degree $m$ divisor $\xi_{m}$ on $C$. Consider the embeddings $C_{k-m}\subseteq C_{k}\subseteq C_{k+1}$, where $C_{k}\subseteq C_{k+1}$ is embedded by sending a divisor $\xi$ to $\xi+p$ and $C_{k-m}\subseteq C_k$ is embedded by sending a divisor $\xi$ to $\xi+\xi_{m}$. Thus, inductively, we only need to show that
	\begin{equation}\label{eq:05}
	A_{k+1,L}|_{C_k} = A_{k,L(-2p)}.
	\end{equation} 
	Regard $X_p=C_k$ as a divisor in $C_{k+1}$. Recall by definition that $A_{k+1,L}=T_{k+1}(L)(-2\delta_{k+1})$. Thus it suffices to prove the following: (1) $T_{k+1}(L)|_{X_p}=T_k(L)$
		and (2) $\delta_{k+1}|_{X_p}=\delta_k+T_k(p)$.
To see (1), we use the commutative diagram 
	$$
	\xymatrix{
	C^k\ar[d]_{q_k} \ar[r]^-{} & C^k\times C\ar[d]^-{\sigma_{k+1}}\\
	X_p \ar@{^{(}->}[r] &C_{k+1},}
	$$
where the upper horizontal map is given by sending $(x_1,\ldots, x_k)$ to $(x_1,\ldots, x_k,p)$. We can check that $q_k^*(T_{k+1,L}|_{X_p})=L^{\boxtimes k}$, which proves (1) as $q_k^*$ is an injection on Picard groups. To see (2), we use the adjunction formula
$K_{X_p}=(K_{C_{k+1}}+X_p)|_{X_p}$.
Since $K_{C_{k+1}}=T_{k+1}(K_C)-\delta_{k+1}$ and $K_{X_p}=T_{k}(K_C)-\delta_k$, we deduce that 
$\delta_{k+1}|_{X_p}=\delta_{k}+X_p|_{X_p}$.
Note that $X_p|_{X_p}=T_{k+1}(p)|_{X_p}=T_k(p)$. Thus (2) is proved.
\end{proof}

\begin{lemma}\label{diagonal}
For any integer $k \geq 1$, the line bundle $\sO_{C_{k+1}}(-\delta_{k+1})$ is a direct summand of the locally free sheaf $q_{k+1,*} \sO_{C^{k+1}}$.
\end{lemma}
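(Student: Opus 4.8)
The plan is to exploit the $\mf{S}_{k+1}$-action on $q_{k+1,*}\sO_{C^{k+1}}$. Since we are in characteristic zero, the group algebra $\Bbbk[\mf{S}_{k+1}]$ is semisimple, so the equivariant locally free sheaf $q_{k+1,*}\sO_{C^{k+1}}$ decomposes canonically into its isotypic components, each of which is an $\sO_{C_{k+1}}$-direct summand. Applying the antisymmetrizing idempotent $e:=\frac{1}{(k+1)!}\sum_{g\in\mf{S}_{k+1}}\sgn(g)\cdot g$, the sign-isotypic sheaf
$$
\sL:=e\big(q_{k+1,*}\sO_{C^{k+1}}\big)=\big(q_{k+1,*}\sO_{C^{k+1}}\big)^{\sgn}
$$
is a direct summand of $q_{k+1,*}\sO_{C^{k+1}}$, hence is locally free. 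Its rank is $1$: the map $q_{k+1}$ is finite flat of degree $(k+1)!$ and is \'{e}tale Galois with group $\mf{S}_{k+1}$ over the dense open locus of reduced divisors, so at the generic point $q_{k+1,*}\sO_{C^{k+1}}$ is the regular representation, whose sign-isotypic piece is one-dimensional. Thus $\sL$ is a line bundle, and everything reduces to identifying it with $\sO_{C_{k+1}}(-\delta_{k+1})$.

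For the identification I would pull back to $C^{k+1}$. Composing $q_{k+1}^*$ of the inclusion $\sL\hookrightarrow q_{k+1,*}\sO_{C^{k+1}}$ with the counit $q_{k+1}^*q_{k+1,*}\sO_{C^{k+1}}\to\sO_{C^{k+1}}$ yields a morphism $q_{k+1}^*\sL\to\sO_{C^{k+1}}$. Zariski-locally on $C_{k+1}$ the sheaf $\sL$ is freely generated over $\sO_{C_{k+1}}$ by a single $\sgn$-semi-invariant regular function on the preimage, and this morphism carries that generator to the function itself; in particular it is a nonzero map of line bundles on the integral variety $C^{k+1}$, so it is injective with image an invertible ideal sheaf $\sO_{C^{k+1}}(-D)$ for a unique effective Cartier divisor $D$ cut out locally by a generator of the module of $\mf{S}_{k+1}$-anti-invariants. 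By the classical description of anti-invariants (reducing, via local coordinates on the smooth variety $C^{k+1}$, to the fact that anti-invariant polynomials form a free rank-one module over the symmetric polynomials generated by the Vandermonde determinant) such a generator vanishes to order exactly one along each pairwise diagonal $\Delta_{u,v}$ and nowhere else, so $D=\sum_{1\le u<v\le k+1}\Delta_{u,v}=\Delta_{k+1}$. By item (3) of the Remark above, which records $q_{k+1}^*\sO_{C_{k+1}}(\delta_{k+1})=\sO_{C^{k+1}}(\Delta_{k+1})$, we obtain $q_{k+1}^*\sL\cong\sO_{C^{k+1}}(-\Delta_{k+1})\cong q_{k+1}^*\sO_{C_{k+1}}(-\delta_{k+1})$, and since $q_{k+1}^*$ is injective on Picard groups (as used in the proof of Lemma \ref{A-restriction}) this gives $\sL\cong\sO_{C_{k+1}}(-\delta_{k+1})$, finishing the proof.

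The main obstacle is exactly this last identification, and within it the claim that $D$ is the \emph{reduced} sum of pairwise diagonals and not some higher multiple — equivalently, that a local generator of the $\mf{S}_{k+1}$-anti-invariants vanishes to order precisely one along each $\Delta_{u,v}$. The inequality $D\ge\Delta_{k+1}$ and the equality $\Supp D=\Supp\Delta_{k+1}$ are immediate (a transposition in the stabilizer of the generic point of $\Delta_{u,v}$ has sign $-1$, forcing every anti-invariant to vanish there, while outside the diagonals there are nonvanishing anti-invariants), so the whole difficulty is the multiplicity-one statement. This requires the \'{e}tale-local structure of $q_{k+1}\colon C^{k+1}\to C_{k+1}$: near a divisor $\sum m_ip_i\in C_{k+1}$ the map is, \'{e}tale-locally, a product of standard maps $\mathbb{A}^{m_i}\to\mathbb{A}^{m_i}/\mf{S}_{m_i}$, which reduces the verification to the classical Vandermonde computation. (For $k=1$ the argument is automatic, since there $q_2=\sigma_2$, $q_{2,*}\sO_{C\times C}=\sO_{C_2}\oplus\sL$, and $\sL=\det(q_{2,*}\sO_{C\times C})=\sO_{C_2}(-\delta_2)$ straight from the definition of $\delta_2$; for $k\ge 2$, however, $\det(q_{k+1,*}\sO_{C^{k+1}})$ acquires contributions from the higher-dimensional isotypic components and is no longer $\sL$, so one genuinely must isolate the sign-isotypic summand as above.)
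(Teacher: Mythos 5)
Your argument is correct, but it is not the route the paper takes for its primary proof. The paper argues by induction on $k$: since $q_{k+1}$ factors through $C_k\times C$, it reduces the claim to showing that $\sO_{C_{k+1}}(-\delta_{k+1})$ is a direct summand of $\sigma_{k+1,*}\big(\sO_{C_k}(-\delta_k)\boxtimes\sO_C\big)$, and establishes this by combining relative duality for the finite flat degree-$(k+1)$ map $\sigma_{k+1}$ (with $\omega_{C_k\times C/C_{k+1}}=\sO(D_k)$) with the projection formula and the pullback formula $\sigma_{k+1}^*\sO(-\delta_{k+1})=(\sO_{C_k}(-\delta_k)\boxtimes\sO_C)(-D_k)$ — no representation theory and no local computation along the diagonals. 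Your approach is instead a direct, non-inductive isolation of the sign-isotypic summand $\sL=(q_{k+1,*}\sO_{C^{k+1}})^{\sgn}$, and it is essentially a refinement of the paper's own \emph{alternative} proof recorded in the Remark immediately after the lemma: there the authors factor $q_{k+1}$ through the quotient $Y=C^{k+1}/\mf{A}_{k+1}$, note that $g_*\sO_Y=\sO\oplus\sO(-\delta')$ for the degree-two map $g\colon Y\to C_{k+1}$ (this $\sO(-\delta')$ is exactly your $\sL$), and then identify $\delta'$ with $\delta_{k+1}$ by a global argument showing the $2$-torsion class $\delta-\delta'$ is trivial. Where you differ is in this last identification: you compute the divisor of a local generator of the anti-invariants directly, getting $\Supp D\subseteq\Delta_{k+1}$ from the generic torsor structure, $D\ge\Delta_{k+1}$ from the stabilizing transpositions, and multiplicity exactly one from the Vandermonde description at the generic point of each $\Delta_{u,v}$ (where the stabilizer is just $\mf{S}_2$, so only the trivial case $x_u-x_v$ of the Vandermonde is actually needed); you then conclude via injectivity of $q_{k+1}^*$ on Picard groups, a fact the paper itself invokes in the proofs of Lemma \ref{A-restriction} and Proposition \ref{p:01}, so this reliance is consistent with the paper's own standards. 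What each approach buys: the paper's inductive duality argument is shorter and stays entirely at the level of line-bundle identities already set up in the preceding Remark, while yours gives more — it exhibits the summand canonically as the sign-isotypic piece and computes its pullback as an explicit ideal sheaf, which makes the geometric meaning of $\delta_{k+1}$ transparent and sidesteps the torsion ambiguity that the paper's own group-theoretic variant has to address separately.
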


\begin{proof} We prove the lemma by the induction on $k$. For $k=1$, it is well known that $q_{2,*} \sO_{C^2}$ splits as $\sO_{C_2} \oplus \sO_{C_2}(-\delta_2)$.
	Since the quotient map $q_{k+1} \colon C^{k+1} \to C_{k+1}$ factors through $C_k \times C$, one only needs to show that  $\sO_{C_{k+1}}(-\delta_{k+1})$ is a direct summand of $\sigma_{k+1,*} (\sO_{C_k}(-\delta_k) \boxtimes \sO_C)$. 
	Observe that $\sO_{C_{k+1}}(-\delta_{k+1})$ is a direct summand of $(\sigma_{k+1,*}\sO_{C_k \times C})^*(-\delta_{k+1})$. 
	By the relative duality with the relative canonical line bundle $\omega_{C_k \times C/C_{k+1}} = \sO_{C_k \times C}(D_k)$, one obtains $(\sigma_{k+1, *}\sO_{C_k \times C})^* = \sigma_{k+1,*}\sO_{C_k \times C}(D_k)$, so
	$$
	(\sigma_{k+1,*}\sO_{C_k \times C})^*(-\delta_{k+1}) = \sigma_{k+1,*}\sO_{C_k \times C}(D_k) \otimes \sO_{C_{k+1}}(-\delta_{k+1}).
	$$
	Recall that $\sigma_{k+1}^* \sO_{C_{k+1}}(-\delta_{k+1}) = (\sO_{C_k}(-\delta_k) \boxtimes \sO_C)(-D_k)$. By the projection formula, we have
	$$
	\sigma_{k+1,*}\sO_{C_k \times C}(D_k) \otimes \sO_{C_{k+1}}(-\delta_{k+1}) = \sigma_{k+1,*} (\sO_{C_k}(-\delta_k) \boxtimes \sO_C),
	$$
	and thus, the lemma is proved.
\end{proof}

\begin{remark}We give an alternative proof of Lemma \ref{diagonal} by group actions, which may be of independent interest. Write the divisor $\delta=\delta_{k+1}$ and the structure sheaf $\sO=\sO_{C_{k+1}}$. Let $\mf{A}_{k+1}$ be the alternating subgroup of the symmetric group $\mf{S}_{k+1}$, and $f \colon C^{k+1} \to Y$ be the quotient morphism under the natural induced action of $\mf{A}_{k+1}$ on $C^{k+1}$.
There is a natural degree two morphism $g \colon Y \rightarrow C_{k+1}$ through which the quotient map $q=q_{k+1} \colon C^{k+1} \to C_{k+1}$ factors, i.e., $q=g \circ f$. 
Note that $Y$ has quotient singularities, which are rational singularities. Thus $Y$ is Cohen--Macaulay, so the map $g$ is flat and $g_*\sO_Y$ splits as $\sO\oplus \sO(-\delta')$ for some divisor $\delta'$ on $C_{k+1}$. We claim that $\delta'$ is actually linearly equivalent to $\delta$. To see this, notice that  $f$ is unramified at codimension one points. Then  $q^*\sO(-2\delta)\cong q^*\sO(-2\delta')$, which means that $\delta-\delta'$ is a $2$-torsion divisor. So if the genus of $C$ is zero, then $C_{k+1}$ has no nontrivial torsion line bundle and therefore $\sO(\delta -\delta')=\sO$. If the genus of $C$ is positive, then since $H^0(\sO(\delta))=0$ and $g_*(g^*\sO(\delta))=\sO(\delta)\oplus \sO(\delta-\delta')$, we see that $\sO(\delta -\delta')=\sO$ if and only if $H^0(g^*\sO(\delta))\neq 0$. But this follows from the fact that the section defining $q^*\delta=\Delta$ is invariant under the group $\mf{A}_{k+1}$, and therefore, it gives a nonzero global section of $g^*\sO(\delta)$. Thus the claim is proved. Finally, note that $\sO_Y$ is a direct summand of $f_*\sO_{C^{k+1}}$. The lemma then follows.
\end{remark}

The following seems to be well known to experts, but we include the proof.

\begin{lemma}\label{K-vanishing}
	For any integers $k \geq 1$ and $i\geq 0$, one has
	$$H^i(C_{k+1},T_{k+1}(L))\cong S^{k+1-i}H^0(C,L)\otimes \wedge^iH^1(C,L).$$
	In particular, the following hold:
	$$
	\begin{array}{l}
	    H^0(C_{k+1}, T_{k+1}(\omega_C)) \cong S^{k+1} H^0(C, \omega_C),\\
	    H^1(C_{k+1}, T_{k+1}(\omega_C)) \cong S^k H^0(C, \omega_C),\\
	    H^i(C_{k+1}, T_{k+1}(\omega_C))=0~\text{ for $i \geq 2$.}
	\end{array}
	$$
\end{lemma}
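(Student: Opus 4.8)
The plan is to reduce the computation to the Künneth formula on the Cartesian product $C^{k+1}$. First I would recall from the Remark that $q_{k+1}^* T_{k+1}(L) = L^{\boxtimes(k+1)}$ and that, since $q_{k+1,*}\sO_{C^{k+1}}$ contains $\sO_{C_{k+1}}$ as a direct summand (the quotient map has degree $(k+1)!$, invertible in characteristic zero), the sheaf $T_{k+1}(L)$ is a direct summand of $q_{k+1,*} L^{\boxtimes(k+1)}$. More precisely, $q_{k+1,*} L^{\boxtimes (k+1)} = \bigoplus_{\chi} \big(q_{k+1,*} L^{\boxtimes(k+1)}\big)^{\chi}$ decomposes according to the irreducible representations of $\mf{S}_{k+1}$, and $T_{k+1}(L)$ is exactly the invariant part, i.e. the isotypic component for the trivial character. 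Hence $H^i(C_{k+1}, T_{k+1}(L))$ is the $\mf{S}_{k+1}$-invariant part of $H^i(C^{k+1}, L^{\boxtimes(k+1)})$ with respect to the natural action (permuting the factors and the tensor slots as in the definition of $T_k(L)$).

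Next I would apply the Künneth formula:
$$
H^i\big(C^{k+1}, L^{\boxtimes(k+1)}\big) \cong \bigoplus_{a_1 + \cdots + a_{k+1} = i} H^{a_1}(C,L) \otimes \cdots \otimes H^{a_{k+1}}(C,L).
$$
Since $C$ is a curve, each $a_j \in \{0,1\}$, so the only surviving summands have exactly $i$ of the $a_j$ equal to $1$ and the rest equal to $0$; there are $\binom{k+1}{i}$ such terms, each isomorphic to a tensor product of $k+1-i$ copies of $H^0(C,L)$ and $i$ copies of $H^1(C,L)$, placed in the appropriate slots. Now I would take $\mf{S}_{k+1}$-invariants: the symmetric group permutes these summands transitively among the $\binom{k+1}{i}$ ways of choosing which slots carry $H^1$, and the stabilizer of a fixed choice is $\mf{S}_{k+1-i} \times \mf{S}_i$, which acts on the corresponding summand by symmetrizing the $H^0$-factors and the $H^1$-factors separately. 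The sign issue is the crucial bookkeeping point: because the action of a transposition on $L^{\boxtimes(k+1)}$ that swaps two slots both carrying a degree-one class picks up a $-1$ from commuting the two $H^1(C)$ classes (Künneth sign), the invariants of $\mf{S}_i$ acting on $\big(H^1(C,L)\big)^{\otimes i}$ are $\wedge^i H^1(C,L)$ rather than $S^i H^1(C,L)$, while the invariants of $\mf{S}_{k+1-i}$ on $\big(H^0(C,L)\big)^{\otimes(k+1-i)}$ are $S^{k+1-i}H^0(C,L)$. Assembling, the invariant subspace is $S^{k+1-i}H^0(C,L) \otimes \wedge^i H^1(C,L)$, as claimed.

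For the ``in particular'' statement I would simply set $L = \omega_C$ and use $h^0(\omega_C) = g$, $h^1(\omega_C) = h^0(\sO_C) = 1$: then $\wedge^i H^1(C,\omega_C) = 0$ for $i \geq 2$, it is $H^1(C,\omega_C) \cong \Bbbk$ for $i = 1$ (giving $S^k H^0(C,\omega_C)$), and it is $\Bbbk$ for $i=0$ (giving $S^{k+1}H^0(C,\omega_C)$). The main obstacle I anticipate is making the sign/Künneth argument fully rigorous — one must track carefully how the $\mf{S}_{k+1}$-action defined on $T_k(L)$ via $\mu \cdot (s_1 \otimes \cdots \otimes s_k) = s_{\mu(1)} \otimes \cdots \otimes s_{\mu(k)}$ interacts with the Künneth isomorphism, so that the antisymmetrization on the $H^1$-part genuinely emerges; an alternative, cleaner route avoiding explicit signs is to argue by induction on $k$ using the divisor $X_p \subset C_{k+1}$ with $\sO_{C_{k+1}}(X_p) = T_{k+1}(\sO_C(p))$ and $T_{k+1}(L)|_{X_p} = T_k(L)$, together with the short exact sequence $0 \to T_{k+1}(L)(-X_p) \to T_{k+1}(L) \to T_k(L) \to 0$ and the identification $T_{k+1}(L)(-X_p) = T_{k+1}(L(-p))$, reducing to the long exact cohomology sequence and Pascal's identity for the binomial-type decomposition — but the Künneth approach is the most transparent and is the one I would write up.
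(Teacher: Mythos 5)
Your proposal is correct and follows essentially the same route as the paper: identify $H^i(C_{k+1},T_{k+1}(L))$ with the $\mf{S}_{k+1}$-invariants of $H^i(C^{k+1},L^{\boxtimes (k+1)})$, apply the K\"unneth formula, reduce to the invariants of the stabilizer $\mf{S}_{k+1-i}\times\mf{S}_i$ on a single summand, and observe that the K\"unneth sign makes the action symmetric on the $H^0$-factors but alternating on the $H^1$-factors, yielding $S^{k+1-i}H^0(C,L)\otimes\wedge^i H^1(C,L)$. The only cosmetic difference is that you derive the invariant-pushforward identification from the isotypic decomposition of $q_{k+1,*}$, whereas the paper cites Lazarsfeld's Proposition 1.1 for it.
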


\begin{proof} 
By \cite[Proposition 1.1]{Lazarsfeld:CohSymm}, we have
$$
H^i(C_{k+1}, T_{k+1}(L))=H^i(C^{k+1}, L^{\boxtimes k+1})^{\mf{S}_{k+1}}~\text{ for any $i\geq 0$},
$$
where the right-hand-side is the invariant subspace under the action of $\mf{S}_{k+1}$.
By K\"{u}nneth formula, the vector space $V:=H^i(C^{k+1}, L^{\boxtimes k+1})$ is a direct sum of the subspace $W:=T^{k+1-i}H^0(C,L)\otimes T^{i}H^1(C,L)$ with some other isomorphic summands, where the notation $T^a$ means the $a$-times tensor products. Write $\mf{G}=\mf{S}_{k+1-i}\times \mf{S}_i$ as the subgroup of $\mf{S}_{k+1}$ fixing the subspace $W$. Then one has the following commutative diagram 
	$$
\xymatrix{
	W \ar[r]^-{\beta} \ar@{^{(}->}[d] & W^{\mf{G}}\ar@{^{(}->}[d]^-{\alpha}\\
	V \ar[r]^-{\alpha} &V^{\mf{S}_{k+1}},}
$$
where $\displaystyle \alpha(x)=\frac{1}{(k+1)!}\sum_{g\in \mf{S}_{k+1}}g(x)$ and $\displaystyle \beta(x) = \frac{1}{(k+1-i)!i!} \sum_{g \in \mf{G}} g(x)$. Since every invariant cohomological class must be of the form 
$$
s+g_1(s)+g_2(s)+\cdots
$$ 
where $s\in W$ and $g_i$ are suitable elements in $\mf{S}_{k+1}$, it follows that the right-hand-side vertical map $\alpha \colon W^{\mf{G}} \to V^{\mf{S}_{k+1}}$ in the above diagram is surjective. 
Hence $W^{\mf{G}}=V^{\mf{S}_{k+1}}$.
But note that the action of the subgroup $\mf{G}$ is symmetric on $T^{k+1-i}H^0(C,L)$ part but alternating on $T^{i}H^1(C,L)$ part of the space $W$. Therefore, the invariant subspace $H^i(C^{k+1}, L^{\boxtimes k+1})^{\mf{S}_{k+1}}$ is isomorphic to $S^{k+1-i}H^0(C,L)\otimes \wedge^iH^1(C,L)$. 
%
\end{proof}

The following theorem will be applied to checking the projective normality of higher secant varieties of curves. In \cite{Danila}, Danila considers the Hilbert schemes of points on surfaces, but the proof smoothly works for the symmetric products of curves.

\begin{theorem}[Danila \cite{Danila}]\label{danila}
For integers $k \geq 1$ and $1 \leq \ell \leq k+1$, one has
$$
H^0\big(C_{k+1}, E_{k+1, L}^{\otimes \ell}\big) \cong H^0(C, L)^{\otimes \ell},
$$
where the isomorphism is $\mf{S}_{k+1}$-equivariant. In particular, 
$$
H^0\big(C_{k+1}, S^{\ell} E_{k+1, L}\big) \cong S^{\ell} H^0(C, L).
$$
\end{theorem}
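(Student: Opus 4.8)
The plan is to transplant Danila's method from the Hilbert-scheme setting: realize $H^0(C_{k+1},E_{k+1,L}^{\otimes\ell})$ as a space of global sections on an iterated fibre product of the universal family, stratify that fibre product by the way the $\ell$ ``marked points'' collide, and then reassemble the cohomology stratum by stratum. Concretely, write $\sigma:=\sigma_{k+1}\colon C_k\times C\to C_{k+1}$, which is finite flat of degree $k+1$, and let $W_\ell$ be the $\ell$-fold fibre product of $C_k\times C$ over $C_{k+1}$ via $\sigma$, with projections $\mathrm{pr}_1,\dots,\mathrm{pr}_\ell\colon W_\ell\to C_k\times C$, structure map $\rho\colon W_\ell\to C_{k+1}$, and $\pi\colon W_\ell\to C^\ell$ the morphism whose $j$-th component is $p\circ\mathrm{pr}_j$. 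Because $\sigma$ is finite flat, a one-line induction on $\ell$ using flat base change and the projection formula gives a canonical isomorphism $E_{k+1,L}^{\otimes\ell}\cong\rho_*\big(\mathrm{pr}_1^*p^*L\otimes\cdots\otimes\mathrm{pr}_\ell^*p^*L\big)=\rho_*\,\pi^*(L^{\boxtimes\ell})$, so that
\[
H^0\big(C_{k+1},E_{k+1,L}^{\otimes\ell}\big)=H^0\big(W_\ell,\pi^*(L^{\boxtimes\ell})\big)=H^0\big(C^\ell,L^{\boxtimes\ell}\otimes\pi_*\sO_{W_\ell}\big).
\]
Since $H^0(C^\ell,L^{\boxtimes\ell})=H^0(C,L)^{\otimes\ell}$ by the K\"unneth formula, the theorem reduces to showing that the natural map $\sO_{C^\ell}\to\pi_*\sO_{W_\ell}$ becomes an isomorphism after applying $H^0(-\otimes L^{\boxtimes\ell})$ --- the case $\ell=1$ being immediate, as $W_1=C_k\times C$ and $H^0(C_k,\sO)=\Bbbk$.

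\emph{Geometry of $W_\ell$ and the easy direct summand.} Set-theoretically $W_\ell=\{((\xi_j,x_j))_j:\xi_i+x_i=\xi_j+x_j\text{ for all }i,j\}$, and its irreducible components are indexed by the set partitions of $\{1,\dots,\ell\}$: the component $W_P$ is the closure of the locus where $x_i=x_j$ \emph{precisely} when $i,j$ lie in the same block of $P$. If $P$ has $d$ blocks, of sizes $m_1,\dots,m_d$, then on $W_P$ the common divisor is $D=y_1+\cdots+y_d+D'$ with $y_1,\dots,y_d$ the distinct marked values and $D'\in C_{k+1-d}$; this is exactly where the hypothesis $\ell\le k+1$ enters, forcing $d\le\ell\le k+1$ so that $C_{k+1-d}$ is nonempty (a reduced point when $d=k+1$), whereas for $\ell>k+1$ the generic stratum is already empty and the statement genuinely fails. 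One identifies $W_P$ with $C^d\times C_{k+1-d}$, the $C^d$ being the partial diagonal $\Delta_P\subseteq C^\ell$ determined by $P$, under which $\pi|_{W_P}$ is the projection to $\Delta_P$ and $\pi^*(L^{\boxtimes\ell})|_{W_P}=(L^{\otimes m_1}\boxtimes\cdots\boxtimes L^{\otimes m_d})\boxtimes\sO_{C_{k+1-d}}$; since $H^0(C_{k+1-d},\sO)=\Bbbk$ this gives $H^0(W_P,\pi^*(L^{\boxtimes\ell}))=\bigotimes_{i=1}^d H^0(C,L^{\otimes m_i})$, and the pairwise intersections $W_P\cap W_Q$ are finite unions of smaller strata of the same shape, on which $\pi^*(L^{\boxtimes\ell})$ again restricts to box products of powers of $L$. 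In particular the finest component $W_{\hat{0}}\cong C^\ell\times C_{k+1-\ell}$ yields $H^0(W_{\hat{0}},\pi^*(L^{\boxtimes\ell}))=H^0(C,L)^{\otimes\ell}$, and one checks that the composite $H^0(C,L)^{\otimes\ell}=H^0(C_{k+1},E_{k+1,L})^{\otimes\ell}\to H^0(C_{k+1},E_{k+1,L}^{\otimes\ell})\to H^0(W_{\hat{0}},\pi^*(L^{\boxtimes\ell}))$ is the identity. Hence $H^0(C,L)^{\otimes\ell}$ is a canonical direct summand of $H^0(C_{k+1},E_{k+1,L}^{\otimes\ell})$, and what remains is to show there is nothing more, i.e. that restriction to $W_{\hat{0}}$ is injective on $H^0(W_\ell,\pi^*(L^{\boxtimes\ell}))$.

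\emph{Assembling, equivariance, and the main difficulty.} Injectivity of restriction to $W_{\hat{0}}$ says that a section of $\pi^*(L^{\boxtimes\ell})$ supported on $\bigcup_{P\ne\hat{0}}W_P$ must vanish; since every such component maps under $\pi$ into the big diagonal of $C^\ell$, the bundle there is pulled back from a proper diagonal, and running the same analysis on this smaller configuration --- equivalently, feeding the groups computed above into the \v{C}ech (Mayer--Vietoris) complex of the covering $W_\ell=\bigcup_P W_P$ and carrying out the inclusion--exclusion over the partition lattice, which is precisely Danila's computation --- makes all the extra contributions cancel, leaving exactly the summand $H^0(C,L)^{\otimes\ell}$. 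Every step of this is natural for the $\mf{S}_\ell$-action that permutes the $\ell$ tensor factors, equivalently the marked points $x_1,\dots,x_\ell$, so the isomorphism is $\mf{S}_\ell$-equivariant; taking invariants and using that we are in characteristic zero gives $H^0(C_{k+1},S^\ell E_{k+1,L})\cong S^\ell H^0(C,L)$. The hard part is scheme-theoretic rather than conceptual: one must verify that $W_\ell$ is reduced with irreducible components exactly the $W_P\cong C^d\times C_{k+1-d}$, that their intersections carry no embedded components, and hence that the covering complex genuinely computes cohomology in degrees $0$ and $1$. This is the step where transplanting Danila's surface argument needs care, though the curve case is in fact lighter, since $C_{k+1}$ is smooth and everything in sight is built from symmetric products of the smooth curve $C$.
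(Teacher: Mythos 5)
The paper does not actually prove Theorem~\ref{danila}: it is imported from Danila's paper with the remark that her Hilbert-scheme argument ``smoothly works'' for symmetric products of curves. Your outline is a faithful transplant of exactly that argument, so there is no alternative route to compare: the reduction of $H^0(C_{k+1},E_{k+1,L}^{\otimes\ell})$ to $H^0(W_\ell,\pi^*(L^{\boxtimes\ell}))$ via flat base change and the projection formula is correct, the indexing of the components $W_P$ of $W_\ell$ by partitions of $\{1,\dots,\ell\}$ is the right combinatorics, and the ``direct summand'' half (the composite through $W_{\hat 0}$ being the identity on $H^0(C,L)^{\otimes\ell}$) is sound. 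The question is only whether your sketch closes the proof, and it does not, in two places.

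First, the step you yourself label ``the hard part'' --- that $W_\ell$ is reduced with irreducible components exactly the $W_P\cong C^d\times C_{k+1-d}$ --- cannot be deferred: without it a global section is not determined by its restrictions to the $W_P$, and the computation $H^0(W_P,\pi^*(L^{\boxtimes\ell}))=\bigotimes_i H^0(C,L^{\otimes m_i})$ is not justified. This is precisely the scheme-theoretic content of Danila's paper, and in your write-up it is assumed rather than proved (for curves it can be checked, e.g.\ locally $W_2$ is a reduced complete intersection, but an argument is owed for general $\ell\le k+1$). Second, the injectivity of restriction to $W_{\hat 0}$ is asserted via ``inclusion--exclusion over the partition lattice \dots\ makes all the extra contributions cancel,'' which is not an argument, and the \v{C}ech complex of a closed cover does not compute cohomology without further input. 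Fortunately no such machinery is needed for an $H^0$ statement: granting reducedness, a section $s$ with $s|_{W_{\hat 0}}=0$ restricts on each $W_P\cong C^d\times C_{k+1-d}$ to a section of $(L^{\otimes m_1}\boxtimes\cdots\boxtimes L^{\otimes m_d})\boxtimes\sO_{C_{k+1-d}}$, which by K\"unneth and $H^0(C_{k+1-d},\sO)=\Bbbk$ is pulled back from $C^d$; since $W_P\cap W_{\hat 0}$ surjects onto $C^d$ (over $(y_1,\dots,y_d)$ take $D''=\sum_j(m_j-1)y_j+D'$ with $D'\in C_{k+1-\ell}$, which is where $\ell\le k+1$ enters), such a section vanishing on the intersection vanishes identically. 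I would replace the cancellation paragraph by this direct argument and either prove or explicitly cite the reducedness. One further small point in your favor: the group acting naturally is $\mf{S}_\ell$ permuting the tensor factors, as you use; the ``$\mf{S}_{k+1}$'' in the statement appears to be a slip, and it is the $\mf{S}_\ell$-equivariance that yields the symmetric-power consequence in characteristic zero.
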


\subsection{Secant varieties via secant bundles}\label{subsec:secvar}
We first recall the following definition.

\begin{definition} We say that a line bundle $L$ on $C$ \emph{separates $k$ points} (or equivalently, \emph{$L$ is $(k-1)$-very ample}) for an integer $k\geq 1$ if the restriction map 
$$
H^0(C, L)\longrightarrow H^0(\xi, L|_{\xi})
$$
is surjective for all $\xi \in C_{k}$. 
\end{definition}

 For instance, $L$ separates 1 point if and only if $L$ is globally generated, and $L$ separates 2 points if and only if $L$ is very ample. By Riemann-Roch theorem, it is elementary to see that if $\deg L\geq 2g+k$, then $L$ separates $k+1$ points. It can be also shown that if $B$ is an effective line bundle and $x_1, \ldots, x_{g+2k+1}$ are general points on $C$, then $B\big( \sum_{i=1}^{g+2k+1} x_i \big)$ separates $k+1$ points.

Directly from the definition of secant sheaves, one has $H^0(C_{k+1}, E_{k+1,L})=H^0(C, L)$. 
Recall that the fiber of $E_{k+1, L}$ over $\xi \in C_{k+1}$ is $H^0(\xi, L|_{\xi})$.
We then see that if $L$ separates $k+1$ points, then $E_{k+1,L}$ is globally generated.
Thus one obtains a short exact sequence of vector bundles
$$
0\longrightarrow M_{k+1,L} \longrightarrow H^0(C, L)\otimes \sO_{C_{k+1}} \stackrel{\ev}{\longrightarrow} E_{k+1,L}\longrightarrow 0,
$$
where $M_{k+1, L}$ is the kernel bundle of the evaluation map $\ev \colon H^0(C, L)\otimes \sO_{C_{k+1}} \rightarrow E_{k+1,L}$ on the global sections of $E_{k+1,L}$.

\begin{definition}
For an integer $k \geq 0$, define the \emph{secant bundle of $k$-planes} over $C_{k+1}$ to be
$$
B^{k}(L):=\nP(E_{k+1,L})
$$ 
equipped with the natural projection $\pi_k \colon B^{k}(L) \rightarrow C_{k+1}$. 
\end{definition}

Suppose that $L$ separates $k+1$ points. Then the tautological bundle $\sO_{\nP(E_{k+1,L})}(1)$ of $B^k(L)$ is also globally generated, and therefore, it induces a morphism 
$$
\beta_k \colon B^k(L)\longrightarrow \nP(H^0(C, L)).
$$

\begin{definition} For $k\geq 0$, assume that a line bundle $L$ on the curve $C$ separates $k+1$ points. The \emph{$k$-th secant variety} $\Sigma_k=\Sigma_k(C, L)$ of $C$ in $\nP(H^0(C, L))$ is the image of the morphism $\beta_k \colon B^k(L)\rightarrow \nP(H^0(C, L))$. We have a morphism
$$
\beta_k \colon B^k(L) \longrightarrow \Sigma_k.
$$
We use the convention that $B^{-1}(L)=\Sigma_{-1}=\emptyset$. 
\end{definition}

Geometrically, if the curve $C$ is embedded by the complete linear system $|L|$ in the projective space $\nP(H^0(C, L))$, then the $k$-th secant variety $\Sigma_k$ is nothing but the variety swept out by the $(k+1)$-secant $k$-planes of $C$. If $L$ separates $k+1$ points, then a $(k+1)$-secant $k$-plane of $C$ is spanned by a divisor $\xi$ on $C$ of degree $k+1$. 

\begin{definition} 
Assume that a line bundle $L$ on the curve $C$ separates $2k+2$ points for an integer $k \geq 0$. Let $m$ be an integer with $0 \leq m\leq k$, and $x\in \Sigma_m \setminus \Sigma_{m-1}$ be a point.  Since $L$ also separates $2m+2$ points,  the morphism $\beta_m \colon B^m(L)\rightarrow \Sigma_m$ is an isomorphism over $U^m(L)$. Hence $x$ can be viewed as a point in $B^m(L)$. Then projecting $x$ by $\pi_m \colon B^m(L)\rightarrow C_{m+1}$, one gets a divisor $\xi_{m+1, x}$ on $C$ of degree $m+1$. It is uniquely determined by $x$. We call $\xi_{m+1, x}$ the \emph{degree $m+1$ divisor on $C$ determined by $x$}.
\end{definition}

 The above definition can be interpreted geometrically. The $m$-plane in $\nP(H^0(C, L))$ spanned by $\xi_{m+1, x}$ is the unique $(m+1)$-secant $m$-plane of $C$ containing $x$. 

Let $x \in \Sigma_k$ be a general point so that $\xi_{k+1,x}$ contains distinct $k+1$ general points of $C$. 
The classical Terracini's lemma asserts that the projective tangent space of $\Sigma_k$ at $x$ in $\nP^r$ is spanned by the projective tangent lines of $C$ at the points of $\xi_{k+1,x}$. Hence the conormal space of $\Sigma_k$ in $\nP^r$ at $x$ is isomorphic to $H^0(C,L(-2\xi_{k+1, x}))$. We will prove a more general version of this statement in Proposition \ref{p:02} below.

For $0 \leq m\leq k$, there is a natural morphism 
$$
\alpha_{k,m} \colon B^m(L)\times C_{k-m}\longrightarrow B^k(L)
$$
defined in \cite[p.432, line --5]{Bertram:ModuliRk2}, which we recall here. For any $\xi_{m+1} \in C_{m+1}$ and $\xi_{k-m} \in C_{k-m}$, let $\xi:=\xi_{m+1} + \xi_{k-m} \in C_{k+1}$. 
Note that the $(m+1)$-secant $m$-plane $\nP(H^0(L|_{\xi_{m+1}}))$ spanned by $\xi_{m+1}$ is naturally embedded in the $(k+1)$-secant $k$-plane $\nP(H^0(L|_{\xi}))$ spanned by $\xi$. 
Fiberwisely, $\alpha_{k,m}$ maps $\nP(H^0(L|_{\xi_{m+1}}))\times \xi_{k-m}$ into $\nP(H^0(L|_{\xi}))$. 
Next, we define the \emph{relative secant variety $Z_m^k$ of $m$-planes} in $B^k(L)$ to be the image of the morphism $\alpha_{k,m}\colon B^m(L)\times C_{k-m}\rightarrow B^k(L)$. 
If the number $k$ is clear from the context, then we simply write $Z_m$ instead of $Z^k_m$. Define 
$$
U^k(L):=B^k(L) \setminus Z^k_{k-1},
$$ 
which is the complement of the largest relative secant variety (see \cite[p.434]{Bertram:ModuliRk2})

The morphism $\alpha_{k,m}$ is compatible with the morphisms $\beta_k$ and $\beta_m$, i.e., one has a commutative diagram
$$
\xymatrix{
	B^m(L)\times C_{k-m} \ar[d]_{\pi_{B^m(L)}} \ar[r]^-{\alpha_{m,k}} & B^k(L)\ar[d]^{\beta_{k}}\\
	B^m(L) \ar[r]_-{\beta_{m}} &\nP(H^0(L)),}
$$
where $\pi_{B^m(L)}$ is the projection.


It has been showed in \cite[Lemma 1.4(a) and Corollary followed]{Bertram:ModuliRk2} that if $L$ separates $2k+2$ points, the morphism $\beta_k \colon B^k(L) \to \Sigma_k$ is birational. In particular, the restricted morphism 
$$
\beta_k|_{U^k(L)} \colon U^k(L)\longrightarrow \nP(H^0(C, L))
$$
is an immersion. Especially, $\Sigma_m \setminus \Sigma_{m-1}$ is isomorphic to $U^m(L)$ for $0\leq m\leq k$. It is clear that $\beta_k(Z_m)=\Sigma_m$, so one has a commutative diagram 
$$
\xymatrix{
	Z_0 \ar@{^{(}->}[r] \ar[d]& Z_1\ar@{^{(}->}[r]\ar[d] & \cdots \ar@{^{(}->}[r] &Z_{k-1}\ar@{^{(}->}[r]\ar[d] & B^k(L) \ar[d]^{\beta_k}  & \\
	C \ar@{^{(}->}[r] & \Sigma_1 \ar@{^{(}->}[r]& \cdots \ar@{^{(}->}[r]&\Sigma_{k-1} \ar@{^{(}->}[r]& \Sigma_k \ar@{^{(}->}[r] & \nP(H^0(L)).
}
$$
It is easy to check that set-theoretically $\beta_k^{-1}(\Sigma_m)=Z_m$. The set of secant varieties $\{\Sigma_i\}_{i=0}^{k-1}$ gives a stratification of $\Sigma_k$, which in turn induces a stratification by relative secant varieties $\{Z_i\}_{i=0}^{k-1}$ for $B^k(L)$. Therefore, for a point $x\in \Sigma_k$, there exists a unique integer $m$ with $0 \leq m\leq k$ such that $x\in \Sigma_m \setminus \Sigma_{m-1}$. 
 
The following is the main result of this subsection. It plays an important role in proving the normality of higher secant varieties of curves. The crucial point is the computation of the conormal sheaf $N^*_{F_{x}/B^k(L)}$. The obstruction lies on the fact that $Z_m$ is quite singular. To overcome this difficulty, we work on suitable nonsingular open subset of $Z_m$.
 
\begin{proposition}\label{p:02}Fix an integer $k\geq 1$, and suppose that a line bundle $L$ on the curve $C$ separates $2k+2$ points. Let $m$ be an integer with  $0\leq m\leq k$. Then the following hold true:
	\begin{enumerate}
		\item  The commutative diagram 
		$$\xymatrix{
			U^m(L)\times C_{k-m} \ar[d]_{\pi_{U^m(L)}} \ar[r]^-{\alpha_{m,k}} & B^k(L)\ar[d]^{\beta_{k}}\\
			U^m(L) \ar[r]_-{\beta_{m}} &\nP(H^0(C, L))=\nP^r}$$
		is a fiber product diagram. 
		\item Let $x\in \Sigma_m \setminus \Sigma_{m-1}$ be a point, $\xi_{m+1,x}$ be the unique degree $m+1$ divisor determined by $x$, and $F_{x}:=\beta_{k}^{-1}(x)$ be the fiber over $x$. Then one has the following:
		\begin{enumerate}
			\item $F_{x}\cong C_{k-m}$.
			\item $N^*_{\Sigma_m/\nP^r}\otimes \Bbbk(x)\cong H^0(C,L(-2\xi_{m+1, x}))$.
			\item $N^*_{Z_{m}/B^k(L)}\Big|_{F_{x}}=E_{k-m,L(-2\xi_{m+1,x})}.$
			\item $N^*_{F_{x}/B^k(L)}\cong \sO^{\oplus 2m+1}_{F_{x}}\oplus E_{k-m,L(-2\xi_{m+1,x})}$.
			\item The natural morphism $$T^*_{x}\nP^r\longrightarrow H^0(F_x, N^*_{F_x/B^k(L)} )$$ is surjective, and is an isomorphism if $m\neq k$.
		\end{enumerate} 
	\end{enumerate}
\end{proposition}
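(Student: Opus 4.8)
I would prove the five items of (2) in the order (a), (b), (c), (d), (e): item (a) falls out of (1), item (b) is a Terracini-type tangent space computation, item (c) is the crux, and (d), (e) follow formally from (c) and (b). For (1), since $L$ separates $2m+2$ points, Bertram's results quoted above say $\beta_m$ restricts to a locally closed immersion $U^m(L)\hookrightarrow\nP^r$ with image $\Sigma_m\setminus\Sigma_{m-1}$, so the fibre product $U^m(L)\times_{\nP^r}B^k(L)$ is exactly the scheme-theoretic preimage $\beta_k^{-1}(\Sigma_m\setminus\Sigma_{m-1})$. In view of the commutative square with $\alpha_{k,m}$ recalled above, it remains to check that $\alpha_{k,m}$ restricts to an isomorphism $U^m(L)\times C_{k-m}\xrightarrow{\sim}\beta_k^{-1}(\Sigma_m\setminus\Sigma_{m-1})$; this is verified fibrewise over $C_{k+1}$ from the explicit description of $(k+1)$-secant $k$-planes together with the fact that a point of $\Sigma_m\setminus\Sigma_{m-1}$ lies on a unique minimal secant plane (which forces injectivity and the isomorphism on tangent spaces). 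Granting (1), item (a) is immediate: taking fibres over $x$, regarded as a point of $U^m(L)$, gives $F_x=\beta_k^{-1}(x)\cong\{x\}\times C_{k-m}\cong C_{k-m}$.

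\textbf{Part (2b).} Because $\Sigma_m\setminus\Sigma_{m-1}\cong U^m(L)$ is smooth, $\Sigma_m$ is smooth at $x$ and its embedded tangent space $\mathbb{T}_x\Sigma_m$ has dimension $2m+1$; and since $L$ separates $2m+2$ points, the length $2m+2$ subscheme $2\xi_{m+1,x}\subseteq C\subseteq\nP^r$ also spans a $(2m+1)$-plane $\langle 2\xi_{m+1,x}\rangle$. Near $x$, $\Sigma_m$ is swept out by the family of $m$-planes $\langle\xi'\rangle$, $\xi'\in C_{m+1}$, and by Terracini's lemma the directions internal to $\langle\xi_{m+1,x}\rangle$ together with those obtained by deforming $\xi'$ all lie in $\langle 2\xi_{m+1,x}\rangle$; hence $\mathbb{T}_x\Sigma_m\subseteq\langle 2\xi_{m+1,x}\rangle$, with equality by the dimension count. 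The conormal space $N^*_{\Sigma_m/\nP^r}\otimes\Bbbk(x)$ is the space of hyperplanes containing this span, which is $H^0(C,L(-2\xi_{m+1,x}))$. (If $\xi_{m+1,x}$ is non-reduced, one replaces tangent lines by the appropriate jets; the argument is unchanged.)

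\textbf{Part (2c), the crux.} Here the difficulty is precisely that $Z_m$ is singular along $Z_{m-1}$, so one must work on the smooth open locus. By (1), $Z_m\setminus Z_{m-1}\cong U^m(L)\times C_{k-m}$ is smooth and coincides with the scheme-theoretic preimage $\beta_k^{-1}(\Sigma_m\setminus\Sigma_{m-1})$; in particular $I_{Z_m}=I_{\Sigma_m}\cdot\sO_{B^k(L)}$ near $F_x$, so there is a natural surjection $\beta_k^*N^*_{\Sigma_m/\nP^r}\twoheadrightarrow N^*_{Z_m/B^k(L)}$ there. Restricting to $F_x$ and using (b) gives a surjection
$$
H^0(C,L(-2\xi_{m+1,x}))\otimes\sO_{F_x}\ \twoheadrightarrow\ N^*_{Z_m/B^k(L)}\big|_{F_x}.
$$
Next one checks, from the separation hypothesis on $L$, that $L(-2\xi_{m+1,x})$ separates $k-m$ points, so the evaluation map $H^0(C,L(-2\xi_{m+1,x}))\otimes\sO_{C_{k-m}}\twoheadrightarrow E_{k-m,L(-2\xi_{m+1,x})}$ is surjective with kernel the sub-bundle $M_{k-m,L(-2\xi_{m+1,x})}$, of the same rank as the kernel of the displayed surjection. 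It then suffices to compare the two kernels at the generic point of $C_{k-m}\cong F_x$: for $\eta$ with $\supp\eta$ reduced and disjoint from $\supp\xi_{m+1,x}$, a direct local computation of the conormal of $Z_m$ in $B^k(L)$ at the point of $F_x$ over $\eta$ identifies the map out of $H^0(C,L(-2\xi_{m+1,x}))$ with restriction to $\eta$, whose kernel is $H^0(C,L(-2\xi_{m+1,x}-\eta))$. As both kernels are saturated subsheaves of $H^0(C,L(-2\xi_{m+1,x}))\otimes\sO_{C_{k-m}}$ and agree generically, they coincide, which proves (c).

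\textbf{Parts (2d) and (2e).} The flag $F_x\subseteq Z_m\setminus Z_{m-1}\subseteq B^k(L)$ and the product structure of $Z_m\setminus Z_{m-1}$ give the exact sequence
$$
0\longrightarrow N^*_{Z_m/B^k(L)}\big|_{F_x}\longrightarrow N^*_{F_x/B^k(L)}\longrightarrow N^*_{F_x/Z_m}\longrightarrow 0,
$$
in which $N^*_{F_x/Z_m}\cong\sO_{F_x}^{\oplus 2m+1}$ is trivial and the left-hand term is $E_{k-m,L(-2\xi_{m+1,x})}$ by (c). Consider the natural map $T^*_x\nP^r\otimes\sO_{F_x}\to N^*_{F_x/B^k(L)}$ arising from $\beta_k^{-1}\mf{m}_x\cdot\sO_{B^k(L)}\subseteq I_{F_x}$; its composition with the last map above is the constant family of the surjection $T^*_x\nP^r\twoheadrightarrow T^*_x U^m(L)$ coming from the conormal sequence of $U^m(L)\hookrightarrow\nP^r$, so any linear splitting of the latter splits the displayed sequence. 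This yields $N^*_{F_x/B^k(L)}\cong\sO_{F_x}^{\oplus 2m+1}\oplus E_{k-m,L(-2\xi_{m+1,x})}$, which is (d). Taking global sections and using $H^0(C_{k-m},E_{k-m,L(-2\xi_{m+1,x})})\cong H^0(C,L(-2\xi_{m+1,x}))$ (Theorem \ref{danila}), the map $T^*_x\nP^r\to H^0(F_x,N^*_{F_x/B^k(L)})$ surjects onto the $\sO_{F_x}^{\oplus 2m+1}$-summand because $T^*_x\nP^r\to T^*_x U^m(L)$ is onto, and onto the other summand because its restriction to $\ker(T^*_x\nP^r\to T^*_x U^m(L))=N^*_{\Sigma_m/\nP^r}\otimes\Bbbk(x)=H^0(C,L(-2\xi_{m+1,x}))$ is, by the description in (c), the evaluation map, which is the identity on $H^0$. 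Hence the map is surjective; for $m<k$ both sides have dimension $h^0(C,L)-1$ (again by the separation hypothesis), so it is an isomorphism, while for $m=k$ its kernel is $H^0(C,L(-2\xi_{k+1,x}))$, which is nonzero as soon as $r>2k+1$.

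\medskip

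I expect the main obstacle to be part (c): it is the place where the singularity of $Z_m$ really bites, so one cannot simply compute the conormal bundle globally and must instead combine the fibre-product description of the smooth locus $Z_m\setminus Z_{m-1}$ from (1), the Terracini computation (b), and a careful matching of kernels of evaluation-type maps at the generic point. Everything else is either formal (items (a), (d), (e)) or a standard secant-variety computation.
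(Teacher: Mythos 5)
Your overall architecture matches the paper's: (1) is proved by showing the square is set-theoretically and then scheme-theoretically cartesian, (2a) falls out of (1), (2b) is Terracini, (2c) identifies $N^*_{Z_m/B^k(L)}|_{F_x}$ with a secant bundle, and (2d), (2e) are the same splitting and diagram-chase arguments (the paper likewise splits the conormal sequence using that $N^*_{F_x/Z_m}$ is trivial and that $T^*_x\nP^r\to H^0(N^*_{F_x/Z_m})=T^*_x\Sigma_m$ is onto). Two steps genuinely diverge. For (2b), you apply Terracini (with jets) at \emph{every} point of $\Sigma_m\setminus\Sigma_{m-1}$; the paper only uses Terracini at a general point and then propagates to all points by a sheaf argument: the composite $\pi_m^*\sM\to H^0(C,L)\otimes\sO\to P^1(\sO_{\Sigma_m}(1))$, with $\sM=\pi_{C_{m+1},*}(\pi_C^*L(-2D_{m+1}))$, is generically zero hence zero, giving $\pi_m^*\sM\cong N^*_{\Sigma_m/\nP^r}(1)|_{U^m(L)}$. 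That argument is what cleanly handles non-reduced $\xi_{m+1,x}$, which your parenthetical only gestures at. For (1), your "isomorphism on tangent spaces" is, in the paper, the surjectivity of $\beta_k^*N^*_{U^m(L)/U}\to N^*_{U^m(L)\times C_{k-m}/V}$, which is reduced fibrewise to Bertram's Lemma 1.4(c); your fibrewise check is the same idea stated less precisely.

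The real issue is (2c), which you correctly identify as the crux. Your plan — surject $H^0(C,L(-2\xi_{m+1,x}))\otimes\sO_{F_x}$ onto both $N^*_{Z_m/B^k(L)}|_{F_x}$ and $E_{k-m,L(-2\xi_{m+1,x})}$, note both quotients are locally free of rank $k-m$ with saturated kernels, and match the kernels at a general $\eta\in C_{k-m}$ — is structurally sound, but its entire content sits in the asserted "direct local computation" that the fibre of the first surjection over $\eta$ is restriction $H^0(C,L(-2\xi_{m+1,x}))\to H^0(\eta,L(-2\xi_{m+1,x})|_\eta)$. That computation is not routine: it is essentially a Terracini-type statement for the conormal of $Z_m$ inside $B^k(L)$ (not inside $\nP^r$), and it is exactly what the paper does \emph{not} redo; instead it imports Bertram's global identification $N^*_{Z_m/B^k(L)}|_{U^m(L)\times C_{k-m}}\cong \pi^*\sH\otimes\sO_{B^m(L)\times C_{k-m}}(-1)$ with $\sH=\pi^C_*(\pi_C^*L\otimes\sO_{\sD_{k-m}}(-2\sD_{m+1}))$, whose restriction to $F_x$ is $E_{k-m,L(-2\xi_{m+1,x})}$ by base change. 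So your route would work, and is arguably more self-contained in spirit, but as written it replaces the one nontrivial input by an unproved assertion; you should either carry out that local computation (e.g.\ by writing $Z_m$ near such a point as the image of $U^m(L)\times C_{k-m}$ and differentiating) or, as the paper does, quote Bertram's Lemma 1.3 and the computation on p.~439 of that paper.
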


\begin{proof}
(1) Let $U:=\nP(H^0(C, L)) \setminus \Sigma_{m-1}$ which is an open subset of $\nP(H^0(C, L))$, and $V:=\beta_{k}^{-1}(U)$. Then we obtain a commutative diagram 
		$$\xymatrix{
			U^m(L)\times C_{k-m} \ar[d]_{\pi_{U^m(L)}} \ar[r]^-{\alpha_{m,k}} & V\ar[d]^{\beta_{k}}\\
			U^m(L) \ar[r]_-{\beta_{m}} &U}$$	
		in which $\alpha_{m,k}$ and $\beta_{m}$ are closed immersions  by \cite[Lemma 1.2]{Bertram:ModuliRk2}. 
Write $Z:=\beta_{k}^{-1}(U^m(L))$. Then we see that $U^m(L)\times C_{k-m} \subseteq Z$. First, we claim that set-theoretically, $U^m(L)\times C_{k-m} = Z$. To see this, let $x\in \Sigma_m\subseteq \Sigma_k$ be a point. Then every $(m+1)$-secant $m$-plane containing $x$ is spanned by a unique degree $m+1$ divisor $\xi_{m+1}$ on $C$. By letting $\xi_{k-m}$ run through all points in $C_{k-m}$, one creates all possible $(k+1)$-secant $k$-plane containing $x$ spanned by $\xi_{m+1}+\xi_{k-m}$. But such $(m+1)$-secant $m$-planes are parameterized by $\beta^{-1}_m(x)$. Hence $\beta^{-1}_k(x)$ is the image of $\beta^{-1}_m(x)\times C_{k-m}$ under $\alpha_{m,k}$ as sets. This proves the claim.
Next, we shall show that scheme-theoretically, $U^m(L)\times C_{k-m} = Z$. To this end, it is enough to show the natural morphism 
		$$
		\beta_{k}^*(N^*_{U^m(L)/U})\longrightarrow N^*_{U^m(L)\times C_{k-m}/V}
		$$
		of conormal sheaves is surjective. Take $x\in U^m(L)$. By base change, it is enough to show that 
		\begin{equation}\label{eq:011}
		\pi_{B^m(L)}^*(N^*_{U^m(L)/U}\otimes \Bbbk(x))\longrightarrow N^*_{U^m(L)\times C_{k-m}/V}|_{\{ x \} \times C_{k-m}}
		\end{equation}
		is surjective. Following notation in \cite[Lemmas 1.3 and 1.4]{Bertram:ModuliRk2}, we have 
		$$
		N^*_{U^m(L)\times C_{k-m}/V}|_{\{x\} \times C_{k-m}}=N^*_{\alpha_{k,m}}(\{ x \} \times C_{k-m}) ~~\text{ and }~~
		N^*_{U^m(L)/U}\otimes \Bbbk(x)=N^*_{\beta_{m}}(x).
		$$
		The morphism in (\ref{eq:011}) is the same as 
		\begin{equation}
		\mu_{m,k} \colon \pi^*_{B^m(L)}N^*_{\beta_m}(x)\longrightarrow N^*_{\alpha_{m,k}}(\{x\} \times C_{k-m})
		\end{equation}
		Hence by \cite[Lemma 1.4(c)]{Bertram:ModuliRk2}, $\mu_{m,k}$ is surjective, which completes the proof. 

\medskip

\noindent (2) (a) This follows directly from (1).

\smallskip

\noindent (b) We identify $U^m(L)=\Sigma_m \setminus \Sigma_{m-1}$. 
Recall that if  $x$ is a general point of $U^m(L)$ and $\xi_{m+1,x}$ contains distinct $m+1$ general points of $C$, then the classical Terracini's lemma implies that $N^*_{\Sigma_m/\nP^r}\otimes \Bbbk(x)\cong H^0(C,L(-2\xi_{m+1, x}))$.

Next write $\pi_C$ and $\pi_{C_{m+1}}$ to be the projections from $C_{m+1}\times C$ to the indicated factors. Let $D_{m+1}\subseteq C_{m+1}\times C$ be the universal divisor over $C_{m+1}$. Consider the sheaf $\sM=\pi_{C_{m+1},*}(\pi^*_C(L)(-2D_{m+1}))$ on $C_{m+1}$. We have 
$$\xymatrix{
	    &                                                      & \pi^*_m\sM|_{U^m(L)} \ar[dr]^{\eta} \ar@{^{(}->}[d] \\
0\ar[r] &	 N^*_{\Sigma_m/\nP^r}(1)|_{U^m(L)} \ar[r] & H^0(C,L)\otimes \sO_{U^m(L)}\ar[r] & P^1(\sO_{ \Sigma_{m}}(1))|_{U^m(L)}\ar[r] & 0  , 
}$$
where  $P^1(\sO_{\Sigma_{m}}(1))$ is the first principal part bundle. As the map $\eta$ is generically zero, it is zero. This implies that $\pi^*_m\sM\cong N^*_{\Sigma_m/\nP^r}(1)|_{U^m(L)}$, and the result follows. 

\smallskip

\noindent (c) This is included in the proof of \cite[Lemma 1.3]{Bertram:ModuliRk2} implicitly. For reader's convenience, we outline the proof here. For a positive integer $i$, write 
$$
D_{i+1}=C\times C_i\subseteq C\times C_{i+1}
$$ 
to be the universal family of divisors of degree $i+1$, embedded via $(x,\xi)\mapsto (x,x+\xi)$. In the space $C\times C_{m+1}\times C_{k-m}$, we define two divisors $\sD_{m+1}$ and $\sD_{k-m}$ as follows
			$$
			\sD_{m+1}:=D_{m+1}\times C_{k-m}, \text{ and }\sD_{k-m}:=C_{m+1}\times D_{k-m}.
			$$
They are nonsingular and meet transversally.
	Let $\pi_C$, $\pi_{C_{m+1}}$, $\pi_{C_{k-m}}$ be the projections of $C\times C_{m+1}\times C_{k-m}$ to the indicated factors, and $\pi^C$, $\pi^{C_{m-1}}$, $\pi^{C_{k-m}}$ be the projections to the complement of the indicated factors. Then $B^m(L)\times C_{k-m}$ can be realized as a projectivized vector bundle over $C_{m+1}\times C_{k-m}$ with a projection $\pi$, i.e.,
			$$
			\pi \colon B^m(L)\times C_{k-m}=\nP\Big(\pi^C_*(\pi^*_CL\otimes \sO_{\sD_{m+1}}) \Big)\longrightarrow C_{m+1}\times C_{k-m}.
			$$
Let $\sO_{B^m(L) \times C_{k-m}}(1)$ be the tautological line bundle on $\nP\Big(\pi^C_*(\pi^*_CL\otimes \sO_{\sD_{m+1}}) \Big)$.
			Consider the vector bundle 
			$$
			\sH=\pi^C_*(\pi^*_CL\otimes \sO_{\sD_{k-m}}(-2\sD_{m+1})).
			$$
The key point proved in \cite[p.439]{Bertram:ModuliRk2} is that 
			$$
			N^*_{Z_{m}/B^k(L)}|_{U^m(L)\times C_{k-m}}\cong \pi^*\sH\otimes \sO_{B^m(L) \times C_{k-m}}(-1)|_{U^m(L)\times C_{k-m}}.
			$$
			Thus we obtain
			$$
			N^*_{Z_{m}/B^k(L)}\Big|_{F_{x}}=\pi^*\sH\otimes \sO_{B^m(L) \times C_{k-m}}(-1)|_{F_x}
			$$
			as $F_x\subseteq U^m(L)\times C_{k-m}$. Since $\sO_{B^m(L) \times C_{k-m}}(-1)|_{F_x}=\sO_{F_x}$ and $\pi^*\sH|_{F_x}=E_{k-m,L(-2\xi_{m+1,x})}$ by base change, the result follows immediately.

\smallskip

\noindent (d) By (1), we see the morphism 
$$
\beta_k \colon U^m(L)\times C_{k-m}=Z_m \setminus Z_{m-1}\longrightarrow U^m(L)=\Sigma_m \setminus \Sigma_{m-1}
$$ 
is a smooth morphism with fibers $C_{k-m}$. Thus we have $$N^*_{F_x/Z_m}=T^*_x\Sigma_m\otimes \sO_{F_x}=\sO_{F_x}^{\oplus 2m+1}$$ since $\Sigma_m$ is nonsingular at $x$ and has dimension $2m+1$. In particular, $H^0(N^*_{F_x/Z_m})=T^*_x\Sigma_m$.  Consider the short exact sequence
			\begin{equation}\label{eq:02}
			0\longrightarrow N^*_{Z_m/B^k(L)}|_{F_x}\longrightarrow N^*_{F_x/B^k(L)}\longrightarrow N^*_{F_x/Z_m}\longrightarrow 0.
			\end{equation}			
			We claim that the above short exact sequence splits. To this end, consider the diagram 
			$$\xymatrix{
				T^*_x\nP(H^0(C, L)) \ar[d] \ar@{->>}[r] & T^*_x\Sigma_m\ar[d]^{=}\\
				H^0(F_x, N^*_{F_x/B^k(L)}) \ar[r] & H^0(F_x, N^*_{F_x/Z_m}).
			}$$
		    We see that the morphism $H^0(F_x, N^*_{F_x/B^k(L)}) \rightarrow H^0(F_x, N^*_{F_x/Z_m})$ is surjective. Thus the short exact sequence (\ref{eq:02}) splits because $N^*_{F_x/Z_m}$ is a direct sum of $\sO_{F_x}$. Hence, we obtain
		    $$N^*_{F_x/B^k(L)} = N^*_{Z_m/B^k(L)}|_{F_x}\oplus N^*_{F_x/Z_m} = E_{k-m,L(-2\xi_{m+1,x})} \oplus \sO^{\oplus 2m+1}_{F_{x}},
		    $$
		    as desired.
	
\smallskip
		    
\noindent (e) Now we use (b), (d) and the sequence  (\ref{eq:02}) to form the commutative diagram 
$$
\xymatrix{
0 \ar[r] & H^0(C, L(-2\xi_{m+1, x}))  \ar[r] \ar[d]^-{=} & T^*_x\nP^r \ar[r] \ar[d] & T^*_x\Sigma_m  \ar[r] \ar[d]^-{=} & 0\\
0 \ar[r]& H^0(C_{k-m}, E_{k-m,L(-2\xi_{m+1, x})}) \ar[r]& H^0(F_x, N^*_{F_x/B^k(L)}) \ar[r]& T^*_x\Sigma_m \ar[r] & 0.
}
$$
The result then follows immediately.
\end{proof}

\begin{remark}
	In the proposition above, it is worth noting that $Z_m \setminus Z_{m-1}= U^m(L)\times C_{k-m}$ and $U^m(L)= \Sigma_m \setminus \Sigma_{m-1}$. Therefore, we actually obtain a fiber product diagram
	$$\xymatrix{
		Z_{m} \setminus Z_{m-1} \ar[d] \ar@{^{(}->}[r] & B^k(L)\ar[d]^{\beta_{k}}\\
		\Sigma_m \setminus \Sigma_{m-1} \ar@{^{(}->}[r] &\nP(H^0(C, L))
	}$$
which means that $Z_m \setminus Z_{m-1}$ is the scheme-theoretical preimage of $\Sigma_m\setminus \Sigma_{m-1}$.
\end{remark}

\subsection{Blowup construction of secant bundles}\label{subsec:blowup}

We keep assuming that $k\geq 1$ and $\deg L\geq 2g+2k+1$. We use the blowup construction of secant bundles established in \cite[Propostitions 2.2, 2.3 and Corollary 2.4]{Bertram:ModuliRk2}.  
For each $0\leq m\leq k$, we will consecutively blowup $B^m(L)$ along smooth centers $m$-times to obtain smooth varieties 
$$
\bbl_1(B^m(L)), \ \bbl_2(B^m(L)),  \ \ldots, \  \bbl_m(B^m(L)).
$$
If $m=0$, then there is nothing to blowup. We simply set $\bbl_0(B^0(L)):=B^0(L)=C$.
Thus we now start with constructing $\bbl_1(B^m(L))$ for $m\geq 1$. Notice that the natural morphism 
$\alpha_{m,0} \colon B^0(L)\times C_m\rightarrow B^m(L)$ 
is a closed embedding for $m\geq 1$. We then define
$$
\bbl_1(B^m(L)):=\text{ blowup of }B^m(L) \text{ along }B^0(C)\times C_m.
$$ 
If $m=1$, then we are done. Otherwise, if $m\geq 2$, then suppose that $\bbl_i(B^m(L))$ has been defined for any $1\leq i\leq m-1$. By \cite[Proposition 2.2]{Bertram:ModuliRk2} and its proof (for instance, the claim in the last two lines on page 444 of \cite{Bertram:ModuliRk2}), we see that the natural morphism 
$\bbl_i(B^i(L))\times C_{m-i}\rightarrow \bbl_i(B^m(L))$
is a closed embedding. We then define 
$$
\bbl_{i+1}(B^m(L)):=\text{ blowup of }\bbl_i(B^m(L)) \text{ along }\bbl_i(B^i(C))\times C_{m-i}.
$$
This construction works for any integer $m$ with $0\leq m\leq k$. We write 
$$
b_m \colon \bbl_m(B^m(L))\longrightarrow B^m(L)
$$ 
the composition map of blowups. 
Denote by $E_i$ for $0\leq i\leq m-1$ the exceptional divisor on $\bbl_m(B^m(l))$ which is from the $(i+1)$-th blowup. Note that $\beta_m(b_m(E_i))=\Sigma_i$. 
It has been showed in \cite{Bertram:ModuliRk2} that in each stage of blowups, the exceptional divisors always meet transversally with the center of the next blowup. Therefore, the divisor $E_0 + \cdots + E_{m-1}$ on $\bbl_m(B^m(L))$ has a simple normal crossing support. 
As proved in \cite{Bertram:ModuliRk2}, we have
$$
E_i \cap E_{i+1} \cap \cdots \cap E_{m-1} = \bbl_i(B^i(L)) \times C^{m-i}~\text{ for $0 \leq i \leq m-1$}.
$$
For example, $E_{m-1} = \bbl_{m-1}(B^{m-1}(L))\times C$ and $E_0\cap \cdots \cap E_{m-1}=\bbl_0(B^0(L))\times C^m=C^{m+1}$.
In particular, for $m=k$ we get the following diagram describing blowups of $B^k(L)$:
$$
\Small \xymatrix@R-7pt@C-11Pt{
	&	& &  & & \bbl_k(B^k(L))\ar[d]^{\cong}  \\
	&	& &  & \bbl_{k-1}(B^{k-1}(L)) \times C \ar@{^{(}->}[r]\ar[d]& \bbl_{k-1}(B^k(L))\ar[d] \\
	&	& &  & \vdots\ar[d]& \vdots\ar[d]  \\
	&	& \bbl_2(B^2(L))\times C_{k-2} \ar[d] \ar@{^{(}->}[r] &\cdots\ar@{^{(}->}[r]  & \bbl_2(Z_{k-1})\ar[d]\ar@{^{(}->}[r]& \bbl_2(B^k(L))\ar[d] \\
	& \bbl_1(B^1(L))\times C_{k-1} \ar[d] \ar@{^{(}->}[r] & \bbl_1(Z_2) \ar[d]\ar@{^{(}->}[r]  &\cdots\ar@{^{(}->}[r]  & \bbl_1(Z_{k-1})\ar[d]\ar@{^{(}->}[r]& \bbl_1(B^k(L)) \ar[d]^{}   \\
	B^0(L)\times C_k \ar@{^{(}->}[r] \ar[d] & Z_1\ar[d]\ar@{^{(}->}[r] & Z_2 \ar@{^{(}->}[r]\ar[d] &\cdots\ar@{^{(}->}[r]  &Z_{k-1}\ar[d]\ar@{^{(}->}[r]& B^k(L) \ar[d]^-{\beta_k}    \\
	C  &  \Sigma_1    &   \Sigma_2 & \cdots &\Sigma_{k-1}&\Sigma_k .
}
$$
where $\bbl_i(Z_{l})$ is the strict transform of the variety $Z_{l}$ in $\bbl_i(B^k(L))$. The variety on the left end of each row in the diagram is the center of the blowup for the next step. If we focus on the final step of blowups of $B^k(L)$, we obtain the following digram
$$\tiny \xymatrix@R-5pt@C-8Pt{
	E_0\cap \cdots\cap E_{k-1}\ar@{=}[d]	&E_1\cap \cdots\cap E_{k-1}\ar@{=}[d]	& E_2\cap \cdots \cap E_{k-1}\ar@{=}[d] &\cdots  & E_{k-1}\ar@{=}[d]& \\
	\bbl_0(B^0(L))\times C^k\ar@{^{(}->}[r]\ar[d]	&\bbl_1(B^1(L))\times C^{k-1}\ar@{^{(}->}[r]\ar[d]	& \bbl_2(B^2(L))\times C^{k-2} \ar@{^{(}->}[r]\ar[d] &\cdots\ar@{^{(}->}[r]  & \bbl_{k-1}(B^{k-1}(L))\times C \ar@{^{(}->}[r]\ar[d]&\bbl_k(B^k(L))\ar[d]^{b_k}\ar[d]^{} \\
	B^0(L)\times C_k \ar@{^{(}->}[r] \ar[d] & Z_1\ar[d]\ar@{^{(}->}[r] & Z_2 \ar@{^{(}->}[r]\ar[d] &\cdots\ar@{^{(}->}[r] &Z_{k-1}\ar@{^{(}->}[r]\ar[d]& B^k(L)\ar[d]^-{\beta_k}\\
	C  &  \Sigma_1    &   \Sigma_2 & \cdots & \Sigma_{k-1}&\Sigma_k .
}$$

The following is the main result of this subsection. It plays a crucial role in the proofs of the main theorems of the paper.

\begin{proposition}\label{p:01}
Fix an integer $k \geq 1$, and let $L$ be a line bundle on the curve $C$ with $\deg L \geq 2g+2k+1$.
Recall that $\pi_k  \colon B^k(L)\rightarrow C_{k+1}$ is the canonical projection. Then the following hold true:
	\begin{enumerate}
		\item $Z_{k-1}$ is flat over $C_{k+1}$.
		\item Let $H$ be the tautological divisor on $B^k(L)=\nP(E_{k+1,L})$ so that $\sO_{B^k(L)}(H):=\beta_k^*\sO_{\Sigma_k}(1)$. Then one has
		$$
		\def\arraystretch{1.5}
		\begin{array}{l}
		\sO_{B^k(L)}((k+1)H-Z_{k-1}) = \pi_k^*A_{k+1,L}, \\[5pt]
		R^i\pi_{k,*}\sO_{B^k(L)}(\ell H-Z_{k-1})=\left\{ \begin{array}{ll}
		0 & \textrm{for }i\geq0,\ 0< \ell \leq k\vspace{0.2cm}\\
		0 & \textrm{for }i>0,\ \ell \geq k+1.
		\end{array} \right.
		\end{array}
		$$
		\item $b_k \colon \bbl_k(B^k(L))\rightarrow B^k(L)$ is a log resolution of the pair $(B^k(L), Z_{k-1})$ such that 
		$$\def\arraystretch{1.5}
		\begin{array}{l}
		K_{\bbl_k(B^k(L))}=b^*_k(K_{B^k(L)}+Z_{k-1})-E_0-E_1-\cdots -E_{k-1},\\
		b^*_{k} Z_{k-1}=kE_0+(k-1)E_1+\cdots +E_{k-1}.
		\end{array}
		$$
		\end{enumerate}
\end{proposition}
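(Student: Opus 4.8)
The three statements are intertwined, so I would prove them in a slightly different order than listed: first extract a fiberwise description of $Z_{k-1}$ inside $B^k(L)=\nP(E_{k+1,L})$, then deduce (1), then (2), and finally (3), allowing the blowup tower of (3) to feed back and pin down $A_{k+1,L}$ in (2) if a direct determinant computation proves awkward. For the fiberwise picture: for $\xi\in C_{k+1}$ the fiber $\nP^k_\xi:=\pi_k^{-1}(\xi)$ is $\nP(H^0(L|_\xi))$, and, by Bertram's description of the relative secant variety, $Z_{k-1}\cap\nP^k_\xi$ is the union of the hyperplanes $\nP(H^0(L|_{\xi'}))$ over the decompositions $\xi=\xi'+x$ with $x\in\Supp\xi$; at a general (reduced) $\xi$ these are $k+1$ distinct hyperplanes, so $\sO_{B^k(L)}(Z_{k-1})|_{\nP^k_\xi}\cong\sO_{\nP^k}(k+1)$ for every $\xi$ (the restriction degree is locally constant on the connected $C_{k+1}$). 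In particular $Z_{k-1}$ meets every fiber of $\pi_k$ in a proper subscheme, so every fiber of $\pi_k|_{Z_{k-1}}$ has dimension $k-1$; since $Z_{k-1}$ is an effective Cartier divisor in the smooth variety $B^k(L)$ it is Cohen--Macaulay, and $C_{k+1}$ is regular, so miracle flatness yields (1).

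\textbf{Part (2).} Since $\sO_{B^k(L)}((k+1)H-Z_{k-1})$ is trivial on every fiber of $\pi_k$, it equals $\pi_k^*\mathcal{A}$ with $\mathcal{A}=\pi_{k,*}\sO_{B^k(L)}((k+1)H-Z_{k-1})$ a line bundle on $C_{k+1}$. To identify $\mathcal{A}$, push
$$0\longrightarrow \sO_{B^k(L)}((k+1)H-Z_{k-1})\longrightarrow \sO_{B^k(L)}((k+1)H)\longrightarrow \sO_{Z_{k-1}}((k+1)H)\longrightarrow 0$$
forward by $\pi_k$; as $R^1\pi_{k,*}\pi_k^*\mathcal{A}=0$ (fibers $\nP^k$), one gets $0\to\mathcal{A}\to S^{k+1}E_{k+1,L}\to\pi_{k,*}\sO_{Z_{k-1}}((k+1)H)\to 0$, so $\mathcal{A}=\det(S^{k+1}E_{k+1,L})\otimes\det(\pi_{k,*}\sO_{Z_{k-1}}((k+1)H))^{-1}$; using (1) for cohomology and base change together with the explicit structure of $Z_{k-1}$ (equivalently, pulling the identity $\sO((k+1)H-Z_{k-1})=\pi_k^*\mathcal{A}$ back to the Cartesian model $C^{k+1}$ inside the blowup tower of part (3) and invoking $q_{k+1}^*\sO(\delta_{k+1})=\sO(\Delta_{k+1})$ and $q_{k+1}^*T_{k+1}(L)=L^{\boxtimes(k+1)}$) this evaluates to $T_{k+1}(L)(-2\delta_{k+1})=A_{k+1,L}$. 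The direct-image statements then follow formally: for $\ell\geq k+1$ write $\sO(\ell H-Z_{k-1})=\sO((\ell-k-1)H)\otimes\pi_k^*A_{k+1,L}$ and apply the projection formula with $R^i\pi_{k,*}\sO(mH)=0$ for $i>0$, $m\geq 0$; for $0<\ell\leq k$ push the analogue of the above sequence forward and observe that, since $H^\bullet(\nP^k,\sO(\ell-k-1))=0$ in all degrees when $-k\leq\ell-k-1\leq -1$, the map $S^\ell E_{k+1,L}=\pi_{k,*}\sO(\ell H)\to\pi_{k,*}\sO_{Z_{k-1}}(\ell H)$ is an isomorphism and all higher direct images of $\sO(\ell H)$ and $\sO_{Z_{k-1}}(\ell H)$ vanish, forcing $R^i\pi_{k,*}\sO(\ell H-Z_{k-1})=0$ for all $i$.

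\textbf{Part (3).} That $\bbl_k(B^k(L))$ is smooth, that each blowup center is smooth and is met transversally by the previously created exceptional divisors, that $E_0+\cdots+E_{k-1}$ is a reduced simple normal crossing divisor, and that $E_i\cap\cdots\cap E_{k-1}=\bbl_i(B^i(L))\times C^{k-i}$, are all part of Bertram's construction, so $b_k$ is a proper birational morphism from a smooth variety. The substance is the identity $b_k^*Z_{k-1}=kE_0+(k-1)E_1+\cdots+E_{k-1}$, which I would prove by induction on $k$, tracking the multiplicity of the strict transform of $Z_{k-1}$ along the center of each successive blowup---this multiplicity being $k-i$ at the stage producing $E_i$, with the final strict transform absorbed into $E_{k-1}=\bbl_{k-1}(B^{k-1}(L))\times C$ because that center is already a divisor; a local computation near $Z_0$ (or a restriction to the Cartesian model) identifies these multiplicities. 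Granting this, $\Supp(b_k^*Z_{k-1})=E_0\cup\cdots\cup E_{k-1}=\Exc(b_k)$ is simple normal crossing and $b_k$ is an isomorphism over $B^k(L)\setminus Z_{k-1}$ (all centers lie over $Z_{k-1}$), so $b_k$ is a log resolution of $(B^k(L),Z_{k-1})$. Finally, the iterated smooth-blowup canonical formula gives $K_{\bbl_k(B^k(L))}=b_k^*K_{B^k(L)}+\sum_{i=0}^{k-1}(\codim(\text{$i$-th center})-1)E_i=b_k^*K_{B^k(L)}+\sum_{i=0}^{k-1}(k-1-i)E_i$ (using smoothness and transversality of the centers, so that earlier exceptionals pull back to their strict transforms without cross terms), and combining with the formula for $b_k^*Z_{k-1}$ yields $K_{\bbl_k(B^k(L))}=b_k^*(K_{B^k(L)}+Z_{k-1})-E_0-\cdots-E_{k-1}$.

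\textbf{Main difficulty.} The two genuinely delicate points are the identification of the twist in (2) as $T_{k+1}(L)(-2\delta_{k+1})$---getting the coefficient $2$ on $\delta_{k+1}$ right needs a careful Chern-class or Cartesian-model computation rather than a soft argument---and the multiplicity formula $b_k^*Z_{k-1}=\sum_i(k-i)E_i$ in (3), which requires understanding how $Z_{k-1}$ sits relative to Bertram's blowup centers. Once these are in hand, the flatness in (1), the direct-image vanishings in (2), and the canonical bundle formula in (3) are comparatively routine consequences of miracle flatness, base change with the projection formula, and the iterated blowup formula.
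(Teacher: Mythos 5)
Your proposal is correct and follows essentially the same route as the paper: miracle flatness for (1) (the paper cites the same criterion, arguing the fiber dimension via the set-theoretic description of $Z_{k-1,\xi}$ as a union of spans of length-$k$ subschemes of $\xi$, while your divisor-restriction argument is an equally valid variant), fiberwise triviality of $\sO_{B^k(L)}((k+1)H-Z_{k-1})$ followed by identification of the twist on the Cartesian model $C^{k+1}=E_0\cap\cdots\cap E_{k-1}$ inside the blowup tower for (2), and the multiplicity count $b_k^*Z_{k-1}=\sum_i(k-i)E_i$ (which the paper obtains by restricting to a general fiber $F_\xi\cong\nP^k$, where $Z_{k-1}$ becomes $k+1$ hyperplanes and the coefficient $k-m$ is the binomial count ${k+1-m\choose k-m}$ of components through a given face) together with the standard iterated-blowup discrepancy formula for (3). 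The two computations you flag as the crux are exactly the ones the paper carries out, by the methods you indicate; the paper's identification of the coefficient $2$ on $\delta_{k+1}$ proceeds via adjunction for the complete intersection $C^{k+1}$ and the canonical bundle formula for $\nP(E_{k+1,L})$, which is the "Cartesian-model computation" you defer to.
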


\begin{proof} We keep using the blowup construction of secant varieties.

\smallskip

\noindent (1) Recall that $Z_{k-1}$ is the image of the map $\alpha_{k-1,k} \colon B^{k-1}(L)\times C\rightarrow B^k(L)$ and $\alpha_{k-1,k}$ is birational to $Z_{k-1}$ since $L$ separates $2k+2$ points (see \cite[Lemma 1.2]{Bertram:ModuliRk2}). Hence $Z_{k-1}$ is an irreducible divisor in $B^k(L)$, and therefore, is Cohen--Macaulay. Now for any point $\xi\in C_{k+1}$, the fiber of the map $Z_{k-1}\rightarrow C_{k+1}$ over $\xi$, at least set-theoretically, is the union of the linear spaces spanned by the length $k$ subschemes of $\xi$. Hence the fiber over $\xi$ has dimension $k-1$. By \cite[23.1]{Mat86}, we see that $Z_{k-1}$ is flat over $C_{k+1}$.
	
\smallskip
	
\noindent (2) Take a general point $\xi\in C_{k+1}$. Without loss of generality, we may assume that $\xi=x_1+\cdots+x_{k+1}$ is a sum of distinct $k+1$ points on $C$. Write $F_\xi:=\pi^{-1}_k(\xi)$ the fiber over $\xi$. Note that $F_\xi=\nP^k$, which can be regarded as a linear subspace of $\nP(H^0(C, L))$ spanned by $x_1, \ldots, x_{k+1}$. In other words, $F_{\xi}$ is the $k$-plane secant to $C$ along $x_1, \ldots, x_{k+1}$.
 Write $\widetilde{F}_\xi$ the strict transform of $F_\xi$ under the birational morphism $b_k$. Write $\Lambda_i=F_\xi\cap Z_i$ for $0\leq i\leq k-1$. We note that 
	$$\def\arraystretch{2}
	\begin{array}{rcl}
		\displaystyle \Lambda_0 &=&\displaystyle F_{\xi}\cap Z_0 =F_\xi\cap B^0(L)\times C_k=\{x_1, x_2,\cdots,x_{k+1}\},\\
		\displaystyle \Lambda_1 &=&\displaystyle F_\xi\cap Z_1=\bigcup_{i\neq j}\overline{x_ix_j},\\
		&&\vdots\\
		\displaystyle\Lambda_{k-1}&=&\displaystyle F_\xi\cap Z_{k-1}= \bigcup_{i_1\neq i_2\neq \cdots \neq i_k}\overline{x_{i_1}x_{i_2}\cdots x_{i_k}}.
	\end{array}$$
	To obtain $\widetilde{F}_{\xi}$, we blowup $F_\xi$ along $\Lambda_0$ and then blowup along the strict transform of $\Lambda_1$, and so on. Now, the number of irreducible components of $\Lambda_{k-1}$ containing $\overline{x_{i_1} \cdots x_{i_m}}$ is ${k+1-m \choose k-m}$ for all $1 \leq m \leq k$. This allows us to calculate the total transform of $\Lambda_{k-1}$ in $\widetilde{F}_\xi$, which in turn implies that
	\begin{equation}\label{eq:Z_{k-1}}
	b^*_k Z_{k-1}={k\choose k-1}E_0+{k-1\choose k-2}E_1+\cdots+{1\choose 0}E_{k-1}=kE_0+(k-1)E_1+\cdots +E_{k-1}
	\end{equation}
because $\widetilde{F}_{\xi}$ meets all the divisors $E_0, \ldots, E_{k-1}$ transversally and $\widetilde{F}_{\xi} \cap E_{m-1}$ is the union of strict transforms of the exceptional divisors over $\Lambda_{m-1}$ for all $1 \leq m \leq k$.

	For a coherent sheaf $\sF$ (resp. a subscheme $Z$) on $B^k(L)$ and for a point $\xi'\in C_{k+1}$, we denote by $\sF_{\xi'}$ (resp. $Z_{\xi'}$) the fiber  over $\xi'$. In this notation, $Z_{k-1,\xi}=\Lambda_{k-1}$	is a union of $k+1$ distinct linear spaces $\nP^{k-1}$ in $B^k(L)_\xi=\nP^k$. Therefore $Z_{k-1,\xi}$ is a degree $k+1$ divisor in $B^k(L)_\xi$. 
	By the result (1),  $Z_{k-1}$ is flat over $C_{k+1}$, so the degree of $Z_{k-1,\xi'}$ in $B^k(L)_{\xi'}$ is $k+1$ for all $\xi'\in C_{k+1}$. This implies that 
	$$
	\sO_{B^k(L)}(\ell H-Z_{k-1})_{\xi'}\cong \sO_{\nP^k}(\ell-(k+1))~ \text{ for all } \ell \in \nZ.
	$$
	Hence the function $h^0(\sO_{B^k(L)}((k+1)H-Z_{k-1})_{\xi'})=1$ for all $\xi'\in C_{k+1}$. Thus 
	$$
	A:=\pi_{k,*}\sO_{B^k(L)}((k+1)H-Z_{k-1})
	$$
	 is a line bundle on $C_{k+1}$. Since $\pi_k \colon \nP(E_{k+1,L}) \to C_{k+1}$ is the natural projection,  we have
	 $$
	 \pi_k^*A\cong \sO_{B^k(L)}((k+1)H-Z_{k-1}).
	 $$
	 Similarly, if $0< \ell \leq k$, then $h^i(\sO_{B^k(L)}(\ell H-Z_{k-1})_{\xi'})=0$ for all $i\geq 0$, and if $\ell \geq k+1$, then $h^i(\sO_{B^k(L)}(\ell H-Z_{k-1})_{\xi'})=0$ for all $i> 0$. Thus we obtain the second result in (2).
	
	 Next, we show that $A=A_{k+1,L}$. We focus on the following commutative diagram 
	 $$\xymatrix{
	 	C^{k+1}  \ar@{^{(}->}[r] \ar[rrrdd]_-{q:=q_{k+1}}& \bbl_1(B^1(L))\times C^2  \ar@{^{(}->}[r]& \bbl_2(B^2(L))\times C \ar@{^{(}->}[r] & \bbl_k(B^k(L))\ar[d]^{b_k}  \\
	 	& &  & B^k(L) \ar[d]^{\pi_k} \\
	 	& & & C_{k+1}.
	 }$$ 
	We have
	$$
	\def\arraystretch{1.5}
	\begin{array}{l}
	b^*_k(\pi_k^*A)|_{C^{k+1}}=q^*A,\\
	b^*_k((k+1)H-Z_{k-1})|_{C^{k+1}}=(k+1)H-(kE_0+(k-1)E_1+\cdots +E_{k-1})|_{C^{k+1}},
	\end{array}
	$$
	where by abuse of notation we write $H=b^*_kH|_{C^{k+1}}$.
	Hence, on $C^{k+1}$, we have
	$$(k+1)H-(kE_0+(k-1)E_1+\cdots +E_{k-1})|_{C^{k+1}}\sim_{\lin} q^*A.$$
	Recall that $C^{k+1}$ is a complete intersection in $\bbl_k(B^k(L))$ cut out by the divisors $E_0, E_1, \ldots, E_{k-1}$. Thus we have
	$$
	\det N^*_{C^{k+1}/\bbl_k(B^k(L))}=\sO_{C^{k+1}}(-E_0-E_1-\cdots -E_{k-1}).
	$$
	Using the formula $\det N^*_{C^{k+1}/\bbl_k(B^k(L))}=\omega_{\bbl_k(B^k(L))}|_{C^{k+1}} \otimes \omega_{C^{k+1}}^{-1}$, we get
	\begin{equation}\label{eq:03}
	-(E_0+E_1+\cdots +E_{k-1})|_{C^{k+1}}=K_{\bbl_k(B^k(L))}|_{C^{k+1}}-K_{C^{k+1}}.
	\end{equation} 
	Recall that $\bbl_k(B^k(L))$ is obtained by consecutively blowing up  the smooth centers $\bbl_i(B^i(L))\times C_{k-i}$ which has codimension $k-i$. Thus we find
	\begin{equation}\label{eq:04}
	-((k-1)\cdot E_0+\cdots +1\cdot E_{k-2}+0\cdot E_{k-1})=- K_{\bbl_k(B^k(L))}+b^*_kK_{B^k(L)}.
	\end{equation}
	Combining (\ref{eq:03}) and (\ref{eq:04}), we obtain
	$$
	-(kE_0+(k-1)E_1+\cdots +E_{k-1})|_{C^{k+1}}=-K_{C^{k+1}}+b_k^*K_{B^k(L)}|_{C^{k+1}}.
	$$
	Recall that $B^k(L)=\nP(E_{k+1,L})$ is a projectivized vector bundle over $C_{k+1}$. Thus	 we have
	\begin{eqnarray}
	K_{B^k(L)} & = &-(k+1)H+\pi_k^*\det E_{k+1,L}+\pi_k^*K_{C_{k+1}}\nonumber \\
	& = & -(k+1)H+\pi_k^*T_{k+1}(L)(-\delta_{k+1})+\pi_k^*T_{k+1}(K_C)(-\delta_{k+1})\nonumber.
	\end{eqnarray}
	Finally, we compute
	\begin{eqnarray}
	& &(k+1)H-(kE_0+(k-1)E_1+\cdots +E_{k-1})|_{C^{k+1}} \nonumber \\
	& =&(k+1)H-K_{C^{k+1}}+\pi_k^*K_{B^k(L)}|_{C^{k+1}}\nonumber\\
	& =& (k+1)H-K_{C^{k+1}} +[-(k+1)H+q^*T_{k+1}(L)(-\delta_{k+1})+q^*T_{k+1}(K_C)(-\delta_{k+1})]\nonumber\\
	& =& q^*(T_{k+1}(L)(-2\delta_{k+1})).\nonumber
	\end{eqnarray}
	Thus $q^*A \cong q^*(T_{k+1}(L)(-2\delta_{k+1}))$. Since $q^* \colon \Pic C_{k+1}\rightarrow \Pic C^{k+1}$ is injective, one gets $A \cong T_{k+1}(L)(-2\delta_{k+1})=A_{k+1,L}$. This proves the first result of (2).
	
	\smallskip
	
	\noindent (3) Recall that $E_0 + \cdots + E_k$ has a simple normal crossing support. Thus the birational morphism $b_k \colon \bbl_k(B^k(L))\rightarrow B^k(L)$ is a log resolution of the pair $(B^k(L), Z_{k-1})$. The remaining assertions follow from (\ref{eq:Z_{k-1}}) and (\ref{eq:04}).
\end{proof}

\section{A vanishing theorem on Cartesian products of curves}\label{sec:vanishing}

\noindent The aim of this section is to establish a vanishing theorem on the product of a curve. It is inspired by Rathmann's vanishing results in  \cite[Section 3]{Rathmann}. A similar result on $C^2$ has been proved by Yang \cite{Yang:Letter}.  

Let us keep the notations introduced in previous sections. Let $k\geq 0$ be an integer. Recall that given a line bundle $L$ on the curve $C$ separating $k+1$ points, there is a short exact sequence 
$$
0\longrightarrow M_{k+1,L} \longrightarrow H^0(C, L)\otimes \sO_{C_{k+1}}  \longrightarrow E_{k+1,L}\longrightarrow 0
$$
on $C_{k+1}$ (see Subsection \ref{subsec:secvar}). Recall also the quotient morphism $q_{k+1} \colon C^{k+1} \to C_{k+1}$, the pairwise diagonal $\Delta_{u,v}:=\{ (x_1, \ldots, x_k) \in C^{k+1} \mid x_u=x_v \}$  on $C^{k+1}$, and $\Delta_{k+1}:=\sum_{1 \leq u < v \leq k+1} \Delta_{u,v}$.
We define the locally free sheaf
$$
Q_{k+1, L}:=q_{k+1}^*M_{k+1,L}.
$$
on the Cartesian product $C^{k+1}$ of the curve $C$. Note that
	$$
	Q_{k+1, L}=p_*\left( (\sO_{C^{k+1}} \boxtimes L) \left(-\sum_{u=1}^{k+1}\Delta_{u, k+2} \right) \right),
	$$
where $p \colon C^{k+2} \to C^{k+1}$ is the projection to the first $k+1$ components.

\begin{theorem}\label{vanishing}
	Let $C$ be a nonsingular projective curve of genus $g$, and $L$ be a line bundle on $C$. 
	For an integer $k \geq 0$, let $B=B'\big(\sum_{i=1}^{g+2k+1}x_i \big)$ be a line bundle on $C$, where $B'$ is an effective line bundle and $x_1, \ldots, x_{g+2k+1}$  are general points on $C$. For integers $i>0$ and $j \geq 0$, suppose that
	$$
	\deg L \geq 2g+2k+1-i+j.
	$$
	Then one has
	\begin{equation}\label{eq-van}
	H^i\big(C^{k+1}, \wedge^j Q_{k+1, B} \otimes L^{\boxtimes k+1}\big(-\Delta_{k+1} \big)\big) = 0.
	\end{equation}
\end{theorem}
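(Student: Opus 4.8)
The plan is to induct on $k$, peeling off one factor of $C$ at a time via the projection $p\colon C^{k+2}\to C^{k+1}$ and the description $Q_{k+1,B}=p_*\big((\sO_{C^{k+1}}\boxtimes B)(-\sum_{u=1}^{k+1}\Delta_{u,k+2})\big)$. The base case $k=0$ is a statement about $C$ itself: $Q_{1,B}=M_B$, $\Delta_1=0$, so one needs $H^i(C,\wedge^jM_B\otimes L)=0$ for $i>0$ and $\deg L\ge 2g+1-i+j$; for $i\ge 2$ this is trivial on a curve, and for $i=1$ it is the standard Green--Lazarsfeld type vanishing for kernel bundles (cf. the argument behind Green's $(2g+1+p)$--theorem), using that $B$ is a sufficiently positive line bundle of the prescribed form so that the $\wedge^jM_B$ have the right cohomology. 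To set up the inductive step, I would first establish a Künneth/Leray decomposition: pushing forward along $p$, the sheaf $\wedge^jQ_{k+1,B}\otimes L^{\boxtimes k+1}(-\Delta_{k+1})$ should be analyzed by writing $Q_{k+1,B}=p_*(\cdots)$ and using the exact sequences coming from the filtration of $\wedge^j$ of a pushforward; concretely, restricting the universal-type sequence to the last diagonal factor gives a short exact sequence relating $Q_{k+1,B}$, its pullback of $Q_{k,B}$, and a divisorial correction term supported on the diagonals $\Delta_{u,k+1}$.

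Here is the mechanism I expect to use for the induction. Let $p\colon C^{k+1}\to C^k$ be projection off the last factor and $\Delta'=\sum_{1\le u<v\le k}\Delta_{u,v}$ the pairwise diagonal upstairs on $C^k$, so $\Delta_{k+1}=p^*\Delta'+\sum_{u=1}^{k}\Delta_{u,k+1}$. On $C^{k+1}$ there is an evaluation sequence
$$
0\longrightarrow Q_{k+1,B}\longrightarrow p^*Q_{k,B}\oplus\big(H^0(C,B)\otimes\sO\big)\Big/(\cdots)\longrightarrow (\text{sheaf on }\textstyle\sum\Delta_{u,k+1})\longrightarrow 0,
$$
or more usefully a two-term sequence
$$
0\longrightarrow p^*M_{k,B}(-{\textstyle\sum_{u=1}^k}\Delta_{u,k+1})\longrightarrow Q_{k+1,B}\longrightarrow M_{B}\boxtimes\text{(last factor)}\longrightarrow 0
$$
obtained from the fact that sections of $E_{k+1,B}$ restricted to the last point factor through $E_{k,B}$ plus the value at that point. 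Taking $\wedge^j$ of this two-step sequence produces a filtration of $\wedge^jQ_{k+1,B}$ whose graded pieces are $\wedge^ap^*M_{k,B}(-a\sum\Delta_{u,k+1})\otimes\wedge^b(\text{rank-1 last factor})$ with $a+b=j$, so $b\in\{0,1\}$. Tensoring with $L^{\boxtimes k+1}(-\Delta_{k+1})$ and pushing forward along $p$: the $-\sum_{u=1}^k\Delta_{u,k+1}$ twists combine with the $-\sum\Delta_{u,k+1}$ from $\Delta_{k+1}$, and after computing $p_*$ on the last factor (a curve computation: $H^*(C,\wedge^bM_B\otimes L(-(a+1)x - \cdots))$, which is where the diagonals get "absorbed") one is reduced to cohomology on $C^k$ of $\wedge^ap^*M_{k,B}\otimes L^{\boxtimes k}(-\Delta')$ twisted by line bundles of controlled degree. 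This is exactly the inductive hypothesis in range $k-1$, with the numerical condition $\deg L\ge 2g+2k+1-i+j$ shifting correctly to $\deg L'\ge 2g+2(k-1)+1-i'+j'$ because dropping a factor trades one unit of each of "$2k$", "$i$-slack", and the diagonal contribution against each other.

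The main obstacle, I expect, is controlling the diagonal twists precisely through the $\wedge^j$ filtration: when one applies $\wedge^j$ to the sequence $0\to p^*M_{k,B}(-D)\to Q_{k+1,B}\to (\text{line bundle})\to 0$ with $D=\sum_{u=1}^k\Delta_{u,k+1}$, the sub-object $\wedge^jp^*M_{k,B}(-jD)$ carries a $(-jD)$ twist, and one must check that after adding the $-\Delta_{k+1}$ and the fiberwise $L$ and pushing forward, the resulting line bundle on $C^k$ still has degree in the range where the inductive hypothesis applies — and simultaneously that the Leray spectral sequence for $p$ degenerates enough (i.e. $R^{>0}p_*$ of each graded piece either vanishes or contributes only to allowed $(i,j)$). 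Managing the bookkeeping of which $H^i(C^k,-)$ the various $R^qp_*$ feed into, and verifying that the "diagonal-absorption" curve computations $H^*(C,\wedge^bM_B\otimes L((-a-1)x))=0$ hold in the claimed degree range (which is where the hypothesis on $B$ being $B'(\sum_{i=1}^{g+2k+1}x_i)$ with general $x_i$ is genuinely needed, to keep $M_B$'s wedge powers cohomologically tame even after twisting down by points), is the technical heart. I would handle it by a careful induction on the pair $(k,j)$ lexicographically, treating $i\ge 2$ and $i=1$ separately, and invoking Danila's theorem (Theorem~\ref{danila}) and Lemma~\ref{K-vanishing} wherever symmetric-product cohomology of the relevant bundles is needed to identify the $p_*$ terms.
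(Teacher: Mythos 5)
Your overall strategy (induct on $k$, relate $Q_{k+1,B}$ to $Q_{k,B}$, absorb the diagonal twists, close the induction with a Leray spectral sequence) is the same as the paper's, and your first ``universal-type'' sequence is essentially the correct one, namely
$$
0\longrightarrow Q_{k+1,B}\longrightarrow Q_{k,B}\boxtimes\sO_C\longrightarrow(\sO_{C^k}\boxtimes B)\Big(-\textstyle\sum_{u=1}^{k}\Delta_{u,k+1}\Big)\longrightarrow 0 .
$$
But the ``more useful two-term sequence'' you substitute for it, $0\to p^*M_{k,B}(-\sum_u\Delta_{u,k+1})\to Q_{k+1,B}\to M_B\boxtimes(\cdots)\to 0$, cannot exist: $\rank Q_{k+1,B}=h^0(B)-k-1$ while the sub already has rank $h^0(B)-k$, and whether the quotient is meant to have rank $h^0(B)-1$ or rank $1$ (your ``$b\in\{0,1\}$'' suggests the latter), the ranks do not add up. This is not a cosmetic slip, because your entire filtration of $\wedge^jQ_{k+1,B}$ into pieces with $a+b=j$, $b\in\{0,1\}$, rests on that sequence. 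In the correct sequence $Q_{k+1,B}$ is the \emph{sub}bundle of a bundle with a line-bundle quotient, and $\wedge^j$ of a corank-one subbundle does not carry a two-step filtration; it is resolved by the (long) Koszul complex
$$
\cdots\to(\wedge^{j+2}Q_{k,B}\boxtimes B^{-2})\Big(2\textstyle\sum_u\Delta_{u,k+1}\Big)\to(\wedge^{j+1}Q_{k,B}\boxtimes B^{-1})\Big(\textstyle\sum_u\Delta_{u,k+1}\Big)\to\wedge^jQ_{k+1,B}\to 0 .
$$
The real content of the proof is exactly what this resolution forces and what your $b\in\{0,1\}$ claim hides: the $\ell$-th term contributes in cohomological degree $i+\ell$ with wedge index $j+\ell+1$ (unbounded), twisted by $B^{-\ell-1}$ on the last factor and by $(\ell+1)\sum_u\Delta_{u,k+1}-\Delta_{k+1}=\ell\sum_u\Delta_{u,k+1}-\Delta_k$. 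Each such group is then killed by K\"unneth when $i+\ell\le1$, and when $i+\ell\ge2$ by Leray for the projection to the \emph{last} factor $C$ (fibers $C^k$, not $C$ as in your setup), where the inductive hypothesis applies to $\wedge^{j+\ell+1}Q_{k,B}\otimes L(\ell x)^{\boxtimes k}(-\Delta_k)$ since $\deg L(\ell x)\ge 2g+2(k-1)+1-i'+(j+\ell+1)$ for $i'=i+\ell-1,\,i+\ell$. Without this bookkeeping the degree hypothesis on $L$ is never actually used.

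A second, smaller gap is the base case and the role of $B$. For an arbitrary effective $B'$ the bundle $M_B$ has large rank and ``standard Green--Lazarsfeld type vanishing'' for all $\wedge^jM_B\otimes L$ is not something you can just quote in the stated degree range. The paper first reduces to $B'=\sO_C$ by induction on $\deg B'$, using the exact sequences $0\to Q_{k+1,B_\ell}\to Q_{k+1,B_{\ell+1}}\to\sO_C(-x'_{\ell+1})^{\boxtimes k+1}\to 0$ (this is where the hypothesis $i\ge j-p$ type slack is consumed, since the wedge index drops by one while $\deg L$ effectively drops by one); only after that reduction is the $k=0$ case trivial, because $h^0\big(C,\sO_C(\sum_{i=1}^{g+1}x_i)\big)=2$ for general points, so $Q_{1,B}=B^{-1}$ is a line bundle and the base case is two explicit $H^1$ vanishings on $C$. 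You gesture at both points but neither is carried out, and the structural error above means the inductive step as written does not go through.
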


\begin{proof}
	Suppose that $B' \neq \sO_C$ so that $b:=\deg B' >0$. We can write $B'=\sO_C\big(\sum_{i=1}^{b}x_i'\big)$, where $x_1', \ldots, x_b'$ are (possibly non-distinct) points on $C$. We set $B_0:=\sO_C\big( \sum_{i=1}^{g+2k+1} x_i \big)$ and $B_{\ell}:=B_0\big(\sum_{i=1}^{\ell}x_i' \big)$ for $1 \leq \ell \leq b$. Then $B_{\ell}$ separates $k+1$ points for each $0 \leq \ell \leq b$, and $B_b=B$.
	For $0 \leq \ell \leq b-1$, we have an exact sequence
	$$
	0 \longrightarrow Q_{k+1, B_{\ell}} \longrightarrow Q_{k+1, B_{\ell+1}} \longrightarrow \sO_{C}(-x_{\ell+1}')^{\boxtimes k+1} \longrightarrow 0,
	$$
	which induces an exact sequence
	$$
	0 \longrightarrow \wedge^j Q_{k+1, B_{\ell}} 
	\longrightarrow \wedge^j Q_{k+1, B_{\ell+1}}
	\longrightarrow \wedge^{j-1} Q_{k+1, B_{\ell}} \otimes \sO_C(-x_{\ell+1}')^{\boxtimes k+1}
	\longrightarrow 0.
	$$
	Then we see that the cohomology vanishing
	$$
	H^i\big(C^{k+1},\wedge^j Q_{k+1, B_{\ell+1}} \otimes L^{\boxtimes k+1}\big(-\Delta_{k+1}\big)\big)=0
	$$
	follows from the cohomology vanishing
	$$
	\def\arraystretch{1.5}
	\begin{array}{l}
	H^i\big(C^{k+1},\wedge^j Q_{k+1, B_{\ell}} \otimes L^{\boxtimes k+1}\big(-\Delta_{k+1}\big)\big)=0,\\
	H^i\big(C^{k+1},\wedge^{j-1} Q_{k+1, B_{\ell}} \otimes L(-x_{\ell+1}')^{\boxtimes k+1}\big(-\Delta_{k+1}\big)\big)=0.
	\end{array}
	$$
	Note that $\deg L \geq 2g+2k+1-i+j$ and $\deg L(-x_{\ell+1}') \geq 2g+2k+1-i+(j-1)$. 
	For each $k$, by the induction on $\ell$, we can conclude that the cohomology vanishing (\ref{eq-van}) for $B=B_0$ (or equivalently, $B'=\sO_C$) implies the cohomology vanishing (\ref{eq-van}) for arbitrary $B$.
	
	We now proceed by the induction on $k$. First, we consider the case that $k=0$ and $B'=\sO_C$. 
	Since $B=\sO_C \big( \sum_{i=1}^{g+1} x_i \big)$ is base point free, we have an exact sequence
	$$
	0 \longrightarrow Q_{1, B} \longrightarrow H^0(C, B) \otimes \sO_C \longrightarrow B \longrightarrow 0.
	$$
	By Riemann-Roch theorem, we find $h^0(C, B)=2$, so $Q_{1,B}=B^{-1}$ is a line bundle. In this case, the required cohomology vanishing (\ref{eq-van}) for $B=B_0$ is nothing but 
	$$
	\def\arraystretch{1.5}
	\begin{array}{l}
	H^1(C, L)=0 ~\text{ when $i=1, ~j=0, ~\deg L \geq 2g$,}\\
	H^1(C, L \otimes B^{-1})=0~\text{ when $i=1,~ j=1, ~\deg L \geq 2g+1$.}
	\end{array}
	$$
	The first vanishing is trivial, and the second vanishing follows from that $\deg L \otimes B^{-1} \geq g$.
	Thus the cohomology vanishing (\ref{eq-van}) holds for $B=B_0$, and so does for arbitrary $B$ when $k=0$. 
	
	Suppose now that $k >0$. By the induction on $k$, for smaller $k$, we assume that the cohomology vanishing (\ref{eq-van}) holds for arbitrary $B$. We consider the case that $B=B_0=\sO_C\big(\sum_{i=1}^{g+2k+1}x_i \big)$.
	
	Assume that $j=\rank(Q_{k+1, B})=k+1$. 
	Note that $\det Q_{k+1, B} = (B^{-1})^{\boxtimes k+1} (\Delta_{k+1})$. Then the desired cohomology vanishing (\ref{eq-van}) is nothing but
	$$
	H^i(C^{k+1}, (L\otimes B^{-1})^{\boxtimes k+1})=0~\text{ for $i>0$}.
	$$
	Since $\deg L \geq 2g+2k+1-i+(k+1)$, we have 
	$$
	\deg L\otimes B^{-1} \geq 2g+3k+2-i - (g+2k+1) = g+k+1-i \geq g.
	$$ 
	Thus  $H^1(C, L\otimes B^{-1})=0$. 
	By K\"{u}nneth formula, the above vanishing holds.
	
	Assume that $j < \rank(Q_{k+1, B})$. From the definition of $Q_{k,L}$ one can deduce a short exact sequence 
	$$
	0 \longrightarrow Q_{k+1, B} \longrightarrow Q_{k, B} \boxtimes \sO_C \longrightarrow (\sO_{C^k}\boxtimes B) \left(-\sum_{u=1}^{k}\Delta_{u, k+1} \right)  \longrightarrow 0.
	$$
	The Koszul complex then gives rise to a resolution of  $\wedge^j Q_{k+1, B}$:
	$$
	\cdots 
	\to 
	(\wedge^{j+2}Q_{k, B} \boxtimes B^{-2}) \left(2 \sum_{u=1}^k \Delta_{u, k+1} \right) 
	\to
	(\wedge^{j+1}Q_{k, B} \boxtimes B^{-1}) \left( \sum_{u=1}^k \Delta_{u, k+1} \right) 
	\to 
	\wedge^j Q_{k+1, B} \to 0
	$$
	(see also \cite[Proposition 3.1]{Rathmann}).
	Thus to show the required cohomology vanishing (\ref{eq-van}),
	it suffices to check that 
	\begin{equation}\label{eq-van-inproof}
	H^{i+\ell} \left(C^{k+1}, \big( (\wedge^{j+\ell+1}Q_{k, B} \otimes L^{\boxtimes k})  \boxtimes (L\otimes B^{-\ell-1} )\big) \left( (\ell+1) \sum_{u=1}^{k} \Delta_{u,k+1} - \Delta_{k+1} \right) \right)=0
	\end{equation}
	for $\ell \geq 0$. In the sequel, we establish (\ref{eq-van-inproof}) under the assumption $\deg L \geq 2g+2k+1-i+j$ and $B=B_0=\sO_C\big(\sum_{i=1}^{g+2k+1}x_i \big)$.
	
	Consider the case that $i + \ell \leq 1$, i.e., $i=1, \ell=0$. 
	In this case, we have
	$$
	\deg L \otimes B^{-1} \geq 2g+2k+1-1+j - (g+2k+1)=g-1+j \geq g-1
	$$
	so that $H^1(C, L \otimes B^{-1})=0$. 
	Note that 
	$$
	\sum_{u=1}^k \Delta_{u, k+1} - \Delta_{k+1} = -\sum_{1 \leq u < v \leq k} \Delta_{u,v} = - \Delta_k.
	$$
	Since we have 
	$$
	\deg L \geq 2g+2k+j \geq 2g+2k-1+j = 2g+2(k-1)+1-1 + (j+1),
	$$
	it follows from the induction on $k$ that
	$$
	H^1 \big(C^k, \wedge^{j+1}Q_{k,B}\otimes L^{\boxtimes k}(-\Delta_k) \big) = 0.
	$$
	By K\"{u}nneth formula, we obtain the desired vanishing (\ref{eq-van-inproof})
	$$
	H^{1} \big(C^{k+1}, \big(  \wedge^{j+1}Q_{k,B}\otimes L^{\boxtimes k}(-\Delta_k) \big) \boxtimes (L \otimes B^{-1})  \big) =0.
	$$
	
	Consider the case that $i+\ell \geq 2$. Let $\text{pr}_{k+1} \colon C^{k+1} \to C$ be the projection to the $(k+1)$-th component.
	The fiber of 
	$$
	R^{i'}\text{pr}_{k+1, *} \left( \big( (\wedge^{j+\ell+1}Q_{k, B} \otimes L^{\boxtimes k} ) \boxtimes (L\otimes B^{-\ell -1} )\big) \left( (\ell+1) \sum_{u=1}^{k} \Delta_{u,k+1} - \Delta_{k+1}\right) \right) 
	$$
	over $x \in C$ is
	\begin{equation}\label{eq-van-inproof2}
	H^{i'}\big(C^k, \wedge^{j+\ell+1}Q_{k,B} \otimes L(\ell x)^{\boxtimes k} (-\Delta_k) \big).
	\end{equation}
	By considering the Leray spectral sequence for $\text{pr}_{k+1, *}$, to show the desired vanishing (\ref{eq-van-inproof})	
	$$
	H^{i+\ell} \left(C^{k+1}, \big( (\wedge^{j+\ell+1}Q_{k, B} \otimes L^{\boxtimes k} ) \boxtimes (L \otimes B^{-\ell-1})\big) \left( (\ell+1) \sum_{u=1}^{k} \Delta_{u,k+1} - \Delta_{k+1} \right) \right)=0,
	$$	
	 it is enough to prove that the cohomology (\ref{eq-van-inproof2}) vanishes for $i' = i+\ell-1, i+\ell$. For this $i'$, we have $i' \geq i-1$, so we find
	$$
	\deg L(\ell x) \geq 2g+2k+1-i+j + \ell \geq 2g+2(k-1)+1-i' + (j+\ell+1).
	$$ 
	By the induction on $k$, we see that the cohomology (\ref{eq-van-inproof2}) vanishes for $i' = i+\ell-1, i+\ell$.
	Thus we obtain the desired vanishing (\ref{eq-van-inproof}).
	Therefore, the cohomology vanishing (\ref{eq-van}) for $B=B_0$ follows, and so does for arbitrary $B$. We complete the proof.
\end{proof}

\section{Properties of secant varieties of curves}\label{sec:prop}

\noindent This section is devoted to the study of various properties of secant varieties of curves. In particular, we prove the main results of the paper; Theorem \ref{main1:singularities} follows from Theorem \ref{normality} and Proposition \ref{sing}, and Theorem \ref{main2:syzygies} follows from Theorem \ref{normality}, Theorem \ref{p:12}, and Corollary \ref{acmreg}. 

We keep using notations introduced before.  Recall that $C$ is a nonsingular projective curve of genus $g$ embedded by a very ample line bundle $L$ in the space $\nP(H^0(C, L))=\nP^r$. Consider the $k$-th secant variety $\Sigma_{k}=\Sigma_{k}(C,L)$ in $\nP^r$. As $\sO_{ \Sigma_{k}}(1)$ is globally generated by the linear forms of $\nP^r$, the evaluation map on the global sections of $\sO_{ \Sigma_{k}}(1)$ induces an short exact sequence 
\begin{equation}\label{eq:08}
0\longrightarrow M_{\Sigma_{k}}\longrightarrow H^0(C,L)\otimes \sO_{ \Sigma_{k}}\longrightarrow\sO_{ \Sigma_{k}}(1)\longrightarrow 0,
\end{equation}
where $M_{\Sigma_{k}}$ is the kernel bundle. Moreover, we also need to consider the $(k-1)$-th secant variety $\Sigma_{k-1}=\Sigma_{k-1}(C,L)$, and use the following exact sequence 
\begin{equation}\label{eq:07}
0 \longrightarrow I_{\Sigma_{k-1}|\Sigma_k}  \longrightarrow \sO_{\Sigma_k} \longrightarrow \sO_{\Sigma_{k-1}}   \longrightarrow 0,
\end{equation}
where $I_{\Sigma_{k-1}|\Sigma_k}$ is the defining ideal sheaf of $\Sigma_{k-1}$ in $\Sigma_k$. Recall the birational morphism $\beta_k \colon B^k(L)\rightarrow \Sigma_k$ and the relative secant variety $Z_{k-1}$ on $B^k(L)$. Suppose that $\Sigma_k$ is normal. By Zariski's main theorem, $\beta_{k,*}\sO_{B^k(L)} = \sO_{\Sigma_k}$, and hence, 
$$
\beta_{k,*}\sO_{B^k(L)}(-Z_{k-1})=I_{\Sigma_{k-1}|\Sigma_k}.
$$

The following lemma is a consequence of the vanishing theorem established in Section \ref{sec:vanishing}. 

\begin{lemma}\label{keylemma} Let $k\geq 0$ and $p\geq 0$ be integers, and $L$ be a line bundle on $C$. Assume that 
	$$\deg L\geq 2g+2k+1+p.$$
Consider the $k$-th secant variety $\Sigma_{k}=\Sigma_{k}(C,L)$ in the space $\nP(H^0(C, L))=\nP^r$. 
	If $\Sigma_k$ is normal and $R^i\beta_{k,*}\sO_{B^k(L)}(-Z_{k-1})=0 \text{ for all }i>0$, then one has 
	$$H^i(\Sigma_k, \wedge^j M_{\Sigma_k} \otimes I_{\Sigma_{k-1}|\Sigma_k}(k+1))=0~\text{ for $i \geq j-p,~ i \geq 1, ~j \geq 0$.}$$
\end{lemma}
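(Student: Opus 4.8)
The plan is to transport the cohomology in the statement down the tower of morphisms $\beta_k\colon B^k(L)\to\Sigma_k$, $\pi_k\colon B^k(L)\to C_{k+1}$ and $q_{k+1}\colon C^{k+1}\to C_{k+1}$, until it becomes an instance of Theorem~\ref{vanishing}. First I would descend to $B^k(L)$. Since $\Sigma_k$ is normal we have $\beta_{k,*}\sO_{B^k(L)}=\sO_{\Sigma_k}$, hence $\beta_{k,*}\sO_{B^k(L)}(-Z_{k-1})=I_{\Sigma_{k-1}|\Sigma_k}$; combining this with the hypothesis $R^i\beta_{k,*}\sO_{B^k(L)}(-Z_{k-1})=0$ for $i>0$, the projection formula for the locally free sheaf $\wedge^jM_{\Sigma_k}$, and Proposition~\ref{p:01}(2) (which gives $\sO_{B^k(L)}((k+1)H-Z_{k-1})=\pi_k^*A_{k+1,L}$, where $H$ is the tautological divisor with $\sO_{B^k(L)}(H)=\beta_k^*\sO_{\Sigma_k}(1)$), the Leray spectral sequence degenerates and yields
$$H^i\bigl(\Sigma_k,\,\wedge^jM_{\Sigma_k}\otimes I_{\Sigma_{k-1}|\Sigma_k}(k+1)\bigr)\;\cong\;H^i\bigl(B^k(L),\,\beta_k^*(\wedge^jM_{\Sigma_k})\otimes\pi_k^*A_{k+1,L}\bigr).$$

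In the second step I would replace $\beta_k^*M_{\Sigma_k}$ by the more tractable $\pi_k^*M_{k+1,L}$. Pulling the defining sequences of $M_{\Sigma_k}$ and of $M_{k+1,L}$ back to $B^k(L)$ and comparing them through the tautological quotient $\pi_k^*E_{k+1,L}\twoheadrightarrow\sO_{B^k(L)}(H)$ — that is, through the relative Euler sequence of $B^k(L)=\nP(E_{k+1,L})$ — a short diagram chase should produce
$$0\longrightarrow\pi_k^*M_{k+1,L}\longrightarrow\beta_k^*M_{\Sigma_k}\longrightarrow\Omega_{B^k(L)/C_{k+1}}(H)\longrightarrow 0.$$
Taking $j$-th exterior powers filters $\beta_k^*(\wedge^jM_{\Sigma_k})$ with graded pieces $\pi_k^*(\wedge^aM_{k+1,L})\otimes\wedge^{j-a}\Omega_{B^k(L)/C_{k+1}}\otimes\sO_{B^k(L)}((j-a)H)$ for $0\le a\le j$. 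For $a<j$ one has $j-a\ge 1$, and Bott vanishing along the $\nP^k$-fibres of $\pi_k$ gives $R\pi_{k,*}\bigl(\wedge^{c}\Omega_{B^k(L)/C_{k+1}}\otimes\sO_{B^k(L)}(cH)\bigr)=0$ for every $c\ge 1$; hence, after tensoring with $\pi_k^*A_{k+1,L}$, each graded piece with $a<j$ has vanishing cohomology in all degrees. Only the bottom piece $a=j$ remains, and since $\pi_k$ is a projective bundle it pushes forward to $\wedge^jM_{k+1,L}\otimes A_{k+1,L}$, so the displayed cohomology equals $H^i\bigl(C_{k+1},\,\wedge^jM_{k+1,L}\otimes A_{k+1,L}\bigr)$.

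Finally I would descend to $C^{k+1}$. Writing $A_{k+1,L}=T_{k+1}(L)(-2\delta_{k+1})=N_{k+1,L}\otimes\sO_{C_{k+1}}(-\delta_{k+1})$ and using Lemma~\ref{diagonal}, the line bundle $A_{k+1,L}$ is a direct summand of $N_{k+1,L}\otimes q_{k+1,*}\sO_{C^{k+1}}=q_{k+1,*}(q_{k+1}^*N_{k+1,L})=q_{k+1,*}\bigl(L^{\boxtimes k+1}(-\Delta_{k+1})\bigr)$, the last equality coming from $q_{k+1}^*T_{k+1}(L)=L^{\boxtimes k+1}$ and $q_{k+1}^*\sO_{C_{k+1}}(\delta_{k+1})=\sO_{C^{k+1}}(\Delta_{k+1})$. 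Since $Q_{k+1,L}=q_{k+1}^*M_{k+1,L}$ and $q_{k+1}$ is finite, the projection formula shows that $H^i(C_{k+1},\wedge^jM_{k+1,L}\otimes A_{k+1,L})$ is a direct summand of
$$H^i\bigl(C^{k+1},\,\wedge^jQ_{k+1,L}\otimes L^{\boxtimes k+1}(-\Delta_{k+1})\bigr).$$
Because $\deg L\ge 2g+2k+1$, a general choice of $g+2k+1$ points on $C$ exhibits $L$ in the form $B'\bigl(\sum x_i\bigr)$ required by Theorem~\ref{vanishing}, and for $i\ge j-p$ the hypothesis $\deg L\ge 2g+2k+1+p$ gives $\deg L\ge 2g+2k+1-i+j$; Theorem~\ref{vanishing} then makes this last group vanish for all $i\ge j-p$, $i\ge 1$, $j\ge 0$, which completes the proof.

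I expect the crux to be the comparison in the second step: pinning down the cokernel of $\pi_k^*M_{k+1,L}\hookrightarrow\beta_k^*M_{\Sigma_k}$ as precisely $\Omega_{B^k(L)/C_{k+1}}(H)$, and then observing that in the exterior-power filtration the wedge degree $j-a$ of the $\Omega$-factor coincides with the twisting power of $H$ in that graded piece — this alignment is exactly what makes relative Bott vanishing bite and kills all the off-diagonal terms. The reconciliation of the $(-2\delta_{k+1})$ hidden in $A_{k+1,L}$ with the $(-\Delta_{k+1})$ appearing in Theorem~\ref{vanishing}, via the splitting in Lemma~\ref{diagonal}, is clean but easy to overlook, and the whole chain of course rests on the nontrivial identity $\sO_{B^k(L)}((k+1)H-Z_{k-1})=\pi_k^*A_{k+1,L}$ from Proposition~\ref{p:01}.
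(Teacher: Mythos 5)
Your proposal is correct and follows essentially the same route as the paper's proof: the paper also reduces via $R^i\beta_{k,*}\sO_{B^k(L)}(-Z_{k-1})$ and $\sO_{B^k(L)}((k+1)H-Z_{k-1})=\pi_k^*A_{k+1,L}$ to cohomology on $B^k(L)$, produces (by a snake-lemma diagram) the very extension $0\to\pi_k^*M_{k+1,L}\to\beta_k^*M_{\Sigma_k}\to K\to 0$ with $K=\Omega_{B^k(L)/C_{k+1}}(H)$, kills the off-diagonal graded pieces of the exterior-power filtration by Bott vanishing along the $\nP^k$-fibres, and then descends via Lemma~\ref{diagonal} to the vanishing on $C^{k+1}$ given by Theorem~\ref{vanishing}. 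The degree check $\deg L\geq 2g+2k+1+p\geq 2g+2k+1-i+j$ for $i\geq j-p$ is exactly the one used in the paper.
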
 

\begin{proof}
Recall that $B^k(L)=\nP(E_{k+1, L})$ with the natural projection $\pi_k \colon B^k(L) \to C_{k+1}$. Let $H$ be the tautological divisor on $B^k(L)$ so that $\sO_{B^k(L)}(H)=\sO_{B^k(L)}(1)=\beta_k^*\sO_{\Sigma_k}(1)$. 
	One can identify $H^0(B^k(L), \sO_{B^k(L)}(H))=H^0(C_{k+1}, E_{k+1, L})=H^0(C, L)$.
	Write $M_H:= \beta_k^* M_{\Sigma_k}$. By the snake lemma, one can form the following commutative diagram 
	\begin{equation}\label{diag:B^k(L)}
	\xymatrix{
		& & & 0 \ar[d] & \\
		& 0 \ar[d] & & K \ar[d] & \\
		0 \ar[r] & \pi_k^*M_{k+1, L} \ar[d] \ar[r] & H^0(C, L) \otimes \sO_{B^k(L)} \ar@{=}[d] \ar[r] & \pi_k^*E_{k+1, L} \ar[d] \ar[r] & 0 \\
		0 \ar[r] & M_H \ar[d] \ar[r] & H^0(C, L) \otimes \sO_{B^k(L)} \ar[r] & \sO_{B^k(L)}(H) \ar[r] \ar[d] & 0 \\
		& K \ar[d] & & 0 & \\
		& 0, & & &
	}
	\end{equation}
	in which the right-hand-side vertical exact sequence is the relative Euler sequence. By Bott's formula on projective spaces, we obtain
	\begin{equation}\label{keyclaim}
	R^i \pi_{k, *}\wedge^j K=0~ \text{ for all }i \geq 0 \text{ and } j > 0.	
	\end{equation}
		Since $\Sigma_k$ is normal and $R^i\beta_{k,*}\sO_{B^k(L)}(-Z_{k-1})=0 \text{ for all }i>0$, we have
	\begin{equation}\label{eq-keylemma1}
	H^i(\Sigma_k, \wedge^j M_{\Sigma_k} \otimes I_{\Sigma_{k-1}|\Sigma_k}(k+1)) = H^i(B^k(L), \wedge^j M_H \otimes \sO_{B^k(L)}((k+1)H-Z_{k-1}))
	\end{equation}
	for $i \geq 0$ and $j \geq 0$.
	Now, the left-hand-side vertical exact sequence of (\ref{diag:B^k(L)})
	induces a filtration 
	$$
	\wedge^j M_H = F^0 \supseteq F_1 \supseteq \cdots \supseteq F^j \supseteq F^{j+1} = 0
	$$
	such that $F^{\ell}/F^{\ell+1} = \pi_k^* \wedge^{\ell}  M_{k+1, L} \otimes \wedge^{j-\ell}K$ for $0 \leq \ell \leq j$. 
	By (\ref{keyclaim}) and the projection formula, we find
	$$
	H^i(B^k(L), \pi_k^* \wedge^{\ell}  M_{k+1, L} \otimes \wedge^{j-\ell}K) = H^i(C_{k+1}, \wedge^{\ell}  M_{k+1, L} \otimes \pi_{k,*}\wedge^{j-\ell}K)=0
	$$
	for $i \geq 0, ~j>0$ and $0 \leq \ell \leq j-1$.
	We have $\sO_{B^k(L)}((k+1)H-Z_{k-1})=\pi^*_kA_{k+1,L}$ by Proposition \ref{p:01} (2). Thus we see that
	\begin{equation}\label{eq-keylemma5}
	H^i(C_{k+1}, \wedge^{j}M_{k+1, L} \otimes A_{k+1, L})=0 ~\text{ for $i \geq j-p,~ i \geq 1, ~j \geq 0$,}
	\end{equation}
	implies the cohomology vanishing
	$$
	H^i(B^k(L), \wedge^j M_H \otimes \sO_{B^k(L)}((k+1)H-Z_{k-1}))=0 ~\text{ for $i \geq j-p,~ i \geq 1, ~j \geq 0$.}
	$$
	Hence by (\ref{eq-keylemma1}), to prove the lemma, it suffices to show the cohomology vanishing (\ref{eq-keylemma5}).
	
	To this end, we consider the natural quotient map $q_{k+1} \colon C^{k+1} \to C_{k+1}$.
	Note that
	$$
	q_{k+1}^* ( \wedge^j M_{k+1, L} \otimes N_{k+1, L} ) = \wedge^{j} Q_{k+1, L} \otimes L^{\boxtimes k+1}\big(-\Delta_{k+1}).
	$$
	By projection formula, we have
	$$
	\wedge^j M_{k+1, L} \otimes N_{k+1, L} \otimes q_{k+1,*}\sO_{C^{k+1}}=q_{k+1,*}\big( \wedge^{j} Q_{k+1, L} \otimes L^{\boxtimes k+1}\big(-\Delta_{k+1} \big)  \big).
	$$
	Recall that $A_{k+1, L} = N_{k+1, L}(-\delta_{k+1})$. Lemma \ref{diagonal} implies that $\wedge^i M_{k+1, L} \otimes A_{k+1, L}$ is a direct summand of 
	$\wedge^j M_{k+1, L} \otimes N_{k+1, L} \otimes q_{k+1,*}\sO_{C^{k+1}}$.
	Thus the desired cohomology vanishing (\ref{eq-keylemma5}) follows from
	$$
	H^i\big(C^{k+1}, \wedge^{j} Q_{k+1, L} \otimes L^{\boxtimes k+1}(-\Delta_{k+1}) \big)=0 ~\text{ for $i \geq j-p, ~i \geq 1, ~j \geq 0$}.
	$$
	which is nothing but Theorem \ref{vanishing} because $L\big(-\sum_{i=1}^{g+2k+1} x_i \big)$ is effective for general points $x_1, \ldots, x_{g+2k+1}$ on $C$. We finish the proof.
\end{proof}

\subsection{Normality, projective normality, and property $N_{k+2,p}$}

The following is the main result of the paper. It is worth noting that all of the claimed properties in the theorem are proved at the same time to make the induction work.

\begin{theorem}\label{normality} Let $k\geq 0$ and $p\geq 0$ be integers, and $L$ be a line bundle on $C$. Assume that $$\deg L\geq 2g+2k+1+p.$$
	Consider the $k$-th secant variety $\Sigma_{k}=\Sigma_{k}(C,L)$ in the space $\nP(H^0(C, L))=\nP^r$. Then one has the following:
	\begin{enumerate}
		\item $\Sigma_k$ is normal.	
		\item $R^i\beta_{k,*}\sO_{B^k(L)}(-Z_{k-1})=0 \text{ for all }i>0.$
		\item $H^i(\Sigma_k, I_{\Sigma_{k-1}|\Sigma_k}(\ell))=H^i(\Sigma_{k}, \sO_{\Sigma_k}(\ell))=0\text{ for all } i>0, \ell >0.$
		\item $\Sigma_k \subseteq \nP^r$ is projectively normal, and satisfies the property $N_{k+2,p}$.
	\end{enumerate}
\end{theorem}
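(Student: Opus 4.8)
The plan is to prove the four statements \emph{simultaneously} by induction on $k$. When $k=0$ we have $\Sigma_0=C$ and $\Sigma_{-1}=\emptyset$, so (1) and (2) are vacuous, (3) is the elementary vanishing $H^i(C,L^{\ell})=0$ for $i,\ell>0$ (valid since $\deg L\geq 2g+1+p>2g-1$), and (4) is exactly Green's $(2g+1+p)$--theorem. For the inductive step I fix $k\geq 1$ and assume the theorem for all smaller values of the first index and all admissible values of the second parameter. The main ingredients will be Green's theorem (at $k=0$), the structural results of Propositions \ref{p:02} and \ref{p:01}, Danila's Theorem \ref{danila}, the vanishing Theorem \ref{vanishing}, and Lemma \ref{keylemma}, which turns (1) and (2) into the Koszul-type vanishing $H^i(\Sigma_k,\wedge^j M_{\Sigma_k}\otimes I_{\Sigma_{k-1}|\Sigma_k}(k+1))=0$ for $i\geq j-p$, $i\geq 1$, $j\geq 0$.

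For (1) I would follow Ullery's strategy. On $\Sigma_k\setminus\Sigma_{k-1}$ the morphism $\beta_k$ is an isomorphism, so $\Sigma_k$ is smooth there; for $x\in\Sigma_m\setminus\Sigma_{m-1}$ with $m<k$ I apply the theorem on formal functions to compare $\widehat{\sO}_{\Sigma_k,x}$ with the inverse limit of $H^0$ of the infinitesimal neighbourhoods of the fibre $F_x=\beta_k^{-1}(x)\cong C_{k-m}$ (Proposition \ref{p:02}(2)(a)). Using the fibre-product description of Proposition \ref{p:02}(1), the conormal splitting $N^*_{F_x/B^k(L)}=\sO_{F_x}^{\oplus 2m+1}\oplus E_{k-m,L(-2\xi_{m+1,x})}$ of Proposition \ref{p:02}(2)(d), and Danila's Theorem \ref{danila}, the graded pieces of these neighbourhoods are expressed through symmetric powers of the secant sheaf $E_{k-m,L(-2\xi_{m+1,x})}$, with global sections $S^{\bullet}H^0(C,L(-2\xi_{m+1,x}))$; matching this with the homogeneous coordinate ring of $\Sigma_{k-m-1}(C,L(-2\xi_{m+1,x}))$ — which is normal and projectively normal by the inductive hypothesis, applicable since $\deg L(-2\xi_{m+1,x})=\deg L-2(m+1)\geq 2g+2(k-m-1)+1+p$ and $k-m-1<k$ — yields that $\widehat{\sO}_{\Sigma_k,x}$ is normal. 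The requisite higher-cohomology vanishing along $F_x$, needed to control the inverse system, is supplied by Theorem \ref{vanishing} (transported from $C^{k-m}$ via Lemma \ref{diagonal}) together with Danila's theorem.

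Statement (2) is handled by the same mechanism. Once (1) holds, Zariski's main theorem gives $\beta_{k,*}\sO_{B^k(L)}(-Z_{k-1})=I_{\Sigma_{k-1}|\Sigma_k}$, which is the $R^0$ part of the Du Bois type conditions (\ref{DB_cond}); for the higher direct images — which are supported on $\Sigma_{k-1}$ — I argue at $x\in\Sigma_m\setminus\Sigma_{m-1}$ by formal functions again. By Proposition \ref{p:02}, Lemma \ref{A-restriction}, and the identity $\sO_{B^k(L)}((k+1)H-Z_{k-1})=\pi_k^*A_{k+1,L}$ of Proposition \ref{p:01}(2), the sheaf $\sO_{B^k(L)}(-Z_{k-1})$ restricts on the thickenings of $F_x\cong C_{k-m}$ to twists of $A_{k-m,L(-2\xi_{m+1,x})}$ by symmetric powers of $E_{k-m,L(-2\xi_{m+1,x})}$, whose higher cohomology on $C_{k-m}$ vanishes by Theorem \ref{vanishing} (via Lemma \ref{diagonal}) and Danila's theorem, since $\deg L(-2\xi_{m+1,x})\geq 2g+2(k-m)-1+p$; alternatively one can argue globally via the log resolution $b_k$ of Proposition \ref{p:01}(3) together with local vanishing. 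I expect Steps (1) and (2) — the formal-function analysis of $\beta_k$ along the stratification by the highly singular relative secant varieties $Z_m$ — to be where essentially all the difficulty lies, and this is exactly what Propositions \ref{p:02} and \ref{p:01} are designed to make tractable.

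Granting (1) and (2), the rest is homological bookkeeping. For (3): when $0<\ell\leq k$, Proposition \ref{p:01}(2) gives $R^i\pi_{k,*}\sO_{B^k(L)}(\ell H-Z_{k-1})=0$ for all $i\geq 0$, hence $H^i(\Sigma_k,I_{\Sigma_{k-1}|\Sigma_k}(\ell))=0$ for all $i\geq 0$; when $\ell\geq k+1$ the same proposition reduces the vanishing of $H^i(\Sigma_k,I_{\Sigma_{k-1}|\Sigma_k}(\ell))$ for $i>0$ to that of $H^i(C_{k+1},S^{\ell-k-1}E_{k+1,L}\otimes A_{k+1,L})$, which follows by pulling back to $C^{k+1}$ through Lemma \ref{diagonal} and a Koszul complex and invoking Theorem \ref{vanishing}. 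Feeding this into the twisted ideal sequence (\ref{eq:07}) together with the inductive vanishing $H^i(\Sigma_{k-1},\sO_{\Sigma_{k-1}}(\ell))=0$ gives $H^i(\Sigma_k,\sO_{\Sigma_k}(\ell))=0$ for $i,\ell>0$. For (4): by (3), Proposition \ref{koszh1} applies to $\Sigma_k$, so $K_{i,j}(\Sigma_k,\sO_{\Sigma_k}(1))=H^1(\Sigma_k,\wedge^{i+1}M_{\Sigma_k}(j-1))$ for $j\geq 2$; starting from Lemma \ref{keylemma} and iterating the wedge powers of the Euler-type sequence (\ref{eq:08}), killing the trivial-bundle contributions by (3), one upgrades the vanishing to $H^i(\Sigma_k,\wedge^j M_{\Sigma_k}\otimes I_{\Sigma_{k-1}|\Sigma_k}(\ell))=0$ for all $\ell\geq k+1$ in the range $i\geq j-p$, $i\geq 1$. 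The sequence (\ref{eq:07}) tensored with $\wedge^j M_{\Sigma_k}(\ell)$, combined with $M_{\Sigma_k}|_{\Sigma_{k-1}}=M_{\Sigma_{k-1}}$ (from restricting (\ref{eq:08})), then reduces the required vanishing for $N_{k+2,p}$ to $K_{i,j}(\Sigma_{k-1},\sO_{\Sigma_{k-1}}(1))=0$ for $i\leq p$, $j\geq k+2$ — property $N_{k+1,p}$ for $\Sigma_{k-1}$, which holds by the inductive hypothesis with the shifted parameter $p+2$ since $\deg L\geq 2g+2(k-1)+1+(p+2)$. Projective normality, $K_{0,j}(\Sigma_k,\sO_{\Sigma_k}(1))=0$ for $j\geq 1$, follows in the same way, using in addition the degenerate-range part of (3) and the identification $H^0(\Sigma_k,\sO_{\Sigma_k}(m))\cong S^m H^0(C,L)$ coming from $B^k(L)=\nP(E_{k+1,L})$ and Danila's theorem (equivalently, from the inductive hypothesis on $\Sigma_{k-1}$ via $0\to I_{\Sigma_k|\nP^r}\to I_{\Sigma_{k-1}|\nP^r}\to I_{\Sigma_{k-1}|\Sigma_k}\to 0$ and (3)).
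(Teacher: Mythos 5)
Your proposal is correct and follows essentially the same route as the paper: simultaneous induction on $k$, with (1) and (2) proved by Ullery-style formal-function arguments along the fibers $F_x\cong C_{k-m}$ using Proposition \ref{p:02} and Lemma \ref{A-restriction}, (3) reduced via Proposition \ref{p:01} to cohomology of $S^{\ell-k-1}E_{k+1,L}\otimes A_{k+1,L}$ on $C_{k+1}$, and (4) obtained from Lemma \ref{keylemma}, the kernel-bundle sequences, and the inductive property $N_{k+1,p+2}$ for $\Sigma_{k-1}$. The only deviations are cosmetic: you source some of the vanishings (e.g.\ for $S^{\ell}E_{k-m,L(-2\xi)}\otimes A_{k-m,L(-2\xi)}$) directly from Theorem \ref{vanishing} via a Koszul resolution of symmetric powers, where the paper reinterprets them through the inductive hypothesis applied to $\Sigma_{k-m-1}(C,L(-2\xi))$, and Danila's theorem only controls $H^0$ (not the higher cohomology you occasionally cite it for), but Theorem \ref{vanishing} covers those cases in any event.
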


\begin{proof} We proceed by the induction on the number $k$. The statements (1), (2), (3) in the theorem are trivial for the case $k=0$ while the statement (4) is Green's theorem. Thus, in the sequel, we assume that $k\geq 1$ and the theorem holds for smaller $k$. For a number $m$ with $0\leq m\leq k$, we let $\Sigma_m:=\Sigma_m(C,L)$.
	
	\smallskip
	
\noindent	(1) The proof here follows  the proofs of Lemma 2.1 and Theorems D of \cite{Ullery:SecantVar}. The question is local. For a closed point $x\in \Sigma_k$, it is enough to show that $\Sigma_k$ is normal at $x$. As $\Sigma_k\setminus \Sigma_{k-1}$ is nonsingular, we assume that $x \in \Sigma_m \setminus \Sigma_{m-1}$ for some $0 \leq m \leq k-1$. Let $\xi:=\xi_{m+1,x} \in C_{m+1}$ be the degree $m+1$ divisor on $C$ determined by $x$.
	 The morphism $\beta=\beta_k \colon B^k(L) \to \Sigma_k$
	induces the morphisms for sheaves
	$$
	\xymatrix{
		\sO_{\nP^r}  \ar@/^1.5pc/[rr]|\  \ar@{->>}[r] & \sO_{ \Sigma_{k}} \ar@{^{(}->}[r] & \beta_*\sO_{B^k(L)}.
	}
	$$
	Thus it suffices to prove that the natural morphism $\sO_{\nP^r}\rightarrow \beta_*\sO_{B^k(L)}$  is surjective at $x \in \Sigma_m \setminus \Sigma_{m-1}$. Let $F:=\beta^{-1}(x)$ be the fiber over $x$. Then $F \cong C_{k-m}$ (Proposition \ref{p:02} (2.a)). By the formal function theorem, it is sufficient to show that the induced morphism 
	$$
	\Psi_x \colon \lim_{\longleftarrow}(\sO_{\nP^r}/\mf{m}^{\ell})\longrightarrow \lim_{\longleftarrow}H^0(\sO_{B^k(L)}/I^{\ell}_{F})
	$$
	is surjective, where $\mf{m}=\mf{m}_x$ is the ideal sheaf of $x\in \nP^r$ and $I_{F}$ is the ideal sheaf of $F$ in $B^k(L)$. Using the commutative diagram 
	$$\xymatrix{
		0\ar[r]&	\mf{m}^{\ell}/\mf{m}^{\ell+1} \ar[d]^{\alpha_{\ell}} \ar[r] & \sO_{\nP^r}/\mf{m}^{\ell+1} \ar[r]\ar[d]  & \sO_{\nP^r}/\mf{m}^{\ell}\ar[d] \ar[r] & 0\\
		0\ar[r]&	H^0(I_F^{\ell}/I_F^{\ell+1}) \ar[r]& H^0(\sO_{B^k(L)}/I^{\ell+1}_F)\ar[r]  & H^0(\sO_{B^k(L)}/I^{\ell}_F)\ar[r] &\cdots
	}$$
	and the induction on $\ell$, we further reduce to show that the map
	$$
	\alpha_{\ell} \colon \mf{m}^{\ell}/\mf{m}^{\ell+1}\longrightarrow H^0(I_F^{\ell}/I_F^{\ell+1}) 
	$$
	is surjective for all $\ell \geq 0$. Note that 
	$$
	\mf{m}^{\ell}/\mf{m}^{\ell+1}=S^{\ell}(T^*_x\nP^r) \quad \text{ and } \quad I_F^{\ell}/I_F^{\ell+1}\cong S^{\ell}N^*_{F/B^k(L)}.$$
	The map $\alpha_{\ell}$ factors as follows
	$$
	\xymatrix{
		S^{\ell} (T^*_x\nP^r)  \ar[dr]_-{\alpha_{\ell}} \ar[r]^-{S^{\ell} \alpha_1} &S^{\ell} H^0\big(N^*_{F/B^k(L)} \big)\ar[d]^{\theta_l}\\
		& H^0\big(S^{\ell} N^*_{F/B^k(L)} \big).
	}
	$$
	But Proposition \ref{p:02} (2.e) says that the map $\alpha_1 \colon T_x^*\nP^r\rightarrow H^*(N^*_{F/B^k(L)})$ is an isomorphism. Thus in order to show that $\alpha_{\ell}$ is surjective, it suffices to show that the morphism $\theta_{\ell}$ is surjective.  
	To this end, we use Proposition \ref{p:02} (2.d), which says that 
	$$
	N^*_{F/B^k(L)}\cong \sO^{\oplus 2m+1}_{F}\oplus E_{n-m,L(-2\xi)}.
	$$
	Thus the surjectivity of $\theta_{\ell}$ would follow from the surjectivity of the morphism
	$$
	S^iH^0(E_{k-m,L(-2\xi)})\longrightarrow H^0(S^iE_{k-m,L(-2\xi)})~ \text{ for }0\leq i\leq \ell.
	$$
	But this follows from the inductive hypothesis because $\deg L(-2\xi)\geq 2g+2(k-m-1)+1+p$ and therefore the secant variety $\Sigma_{k-m-1}(C, L(-2\xi))$ in the space $\nP(H^0(C, L(-2\xi)))$ is normal and projective normality.

	\smallskip
	
\noindent	(2) The question is local.  For a closed point $x \in \Sigma_k$, we shall show that $R^i\beta_{*}\sO_{B^k(L)}(-Z_{k-1})_x= 0$ for all $i>0$. Since $\beta \colon B^k(L) \to \Sigma_k$ is isomorphic over $x \in \Sigma_k \setminus \Sigma_{k-1}$, we may assume $x\in \Sigma_m \setminus \Sigma_{m-1}$ for some  $0\leq m\leq k-1$. Let $\xi:=\xi_{m+1, x} \in C_{m+1}$ be the degree $m+1$ divisor on $C$ determined by $x$. 
	Let $F:=\beta^{-1}(x)$ be the fiber of $\beta$ over $x$, and $I_{F}$ be the ideal sheaf of $F$ in $B^k(L)$. Recall that $F \cong C_{k-m}$ (Proposition \ref{p:02} (2.a)). By the formal function theorem, it suffices to show that
	$$
	\lim_{\longleftarrow} H^i(F, \sO_{B^k(L)}(-Z_{k-1})\otimes \sO_{B^k(L)}/I_{F}^{\ell})=0~\text{ for $i>0$}.
	$$
	To this end, we need to prove that
	$$
	H^i(F, \sO_{B^k(L)}(-Z_{k-1})\otimes \sO_{B^k(L)}/I_{F}^{\ell})=0~ \text{ for }i>0 \text{ and } \ell \geq 1.
	$$
	which can be deduced from the vanishing
	\begin{equation}\label{eq:06}
	H^i(F, \sO_{B^k(L)}(-Z_{k-1})\otimes I^{\ell}_{F}/I_{F}^{\ell+1})=0~ \text{ for }i>0 \text{ and } \ell \geq 0.
	\end{equation}
	One can calculate that  $\sO_{B^k(L)}(-Z_{k-1})|_{F}=A_{k+1,L}|_{F}=A_{k-m,L(-2\xi)}$ by Lemma \ref{A-restriction} and that $I^{\ell}_{F}/I_{F}^{\ell+1}=S^{\ell} N^*_{F/B^k(L)} $ for $\ell\geq 0$, where $N^*_{F/B^k(L)}\cong \sO^{\oplus 2m+1}_{F}\oplus E_{k-m,L(-2\xi)}$ by Proposition \ref{p:02} (2.d). Thus vanishing (\ref{eq:06}) 
	can be reduced further to show
	\begin{equation}\label{eq-R2}
	H^i(C_{k-m}, A_{k-m,L(-2\xi)}\otimes S^\ell E_{k-m,L(-2\xi)})=0~ \text{ for }i>0 \text{ and } \ell\geq 0.
	\end{equation}
	Now, as $\deg L(-2\xi) \geq 2g+2(k-m-1)+1+p$,  the line bundle $L(-2\xi)$ is very ample. Accordingly, we consider the secant varieties $\Sigma'_{k-m-1}:=\Sigma_{k-m-1}(C, L(-2\xi))$ and $\Sigma'_{k-m-2}:=\Sigma'_{k-m-2}(C, L(-2\xi))$ in the space $H^0(C,L(-2\xi))$. By inductive hypothesis, the proposition holds for $\Sigma'_{k-m-1}$. Recall that $B^{k-m-1}(L(-2\xi))=\nP(E_{k-m, L(-2\xi)})$ with  the projection $\pi_{k-m-1}$ to $C_{k-m}$ and there is a birational morphism $\beta_{k-m-1} \colon B^{k-m-1}(L(-2\xi)) \to \Sigma'_{k-m-1}$. Write $H$ to be the tautological divisor on $B^{k-m-1}(L(-2\xi))$. 
	Notice that
	$$
	\def\arraystretch{1.5}
	\begin{array}{l}
	\pi_{k-m-1,*}\sO_{B^{k-m-1}(L(-2\xi))}((k-m)H-Z_{k-m-2}) = A_{k-m, L(-2\xi)},\\
	\beta_{k-m-1,*}\sO_{B^{k-m-1}(L(-2\xi))}(-Z_{k-m-2}) =  I_{\Sigma'_{k-m-2}|\Sigma'_{k-m-1}}.
	\end{array}
	$$
	By applying the inductive hypothesis for $\Sigma'_{k-m-1}$, we have
	$$
	\def\arraystretch{1.5}
	\begin{array}{l}
	H^i(C_{k-m}, S^{\ell-k+m}E_{k-m, L(-2\xi)} \otimes A_{k-m, L(-2\xi)}) \\
	=H^i(B^{k-m-1}(L(-2\xi)), \sO_{B^{k-m-1}(L(-2\xi))}(\ell H - Z_{k-m-2})) \\
	=H^i(\Sigma'_{k-m-1}, I_{\Sigma'_{k-m-2}|\Sigma'_{k-m-1}}(\ell))
	\end{array}
	$$
	for all $i\geq 0$ and $\ell \in \mathbb{Z}$. Hence, vanishing (\ref{eq-R2}) follows from the vanishing for $I_{\Sigma'_{k-m-2}|\Sigma'_{k-m-1}}$, which holds by the inductive hypothesis. This completes the proof of (2). 
	
	\smallskip

\noindent	(3)
		By the inductive hypothesis, we have $H^i(\Sigma_{k-1}, \sO_{\Sigma_{k-1}}(\ell))=0$ for $i>0$ and $\ell >0$. Grant for the time being the following claim:
		\begin{equation}\label{eq-strshf1}
		H^i(\Sigma_k, I_{\Sigma_{k-1}|\Sigma_k}(\ell))=0~\text{ for all $i>0$ and $1 \leq \ell \leq 2k+2-i$}.
		\end{equation}
		Chasing through the associated long exact sequence to the short exact sequence (\ref{eq:07}), we obtain
		$$
		H^i(\Sigma_k, \sO_{\Sigma_k}(\ell))=0~\text{ for all $i>0$ and $1 \leq \ell \leq 2k+2-i$}.
		$$
In particular, $\sO_{\Sigma_k}$ is $(2k+2)$-regular, so the assertion (3) follows.		
		
		We next turn to the proof of the claim (\ref{eq-strshf1}).
		Let $H$ be the tautological divisor on $B^k(L)=\nP(E_{k+1,L})$. By (1), $\Sigma_k$ is normal. Thus we have
		$$
		\beta_{k,*}\sO_{B^k(L)}(-Z_{k-1})=I_{\Sigma_{k-1}|\Sigma_k}~~\text{ and }~~\pi_{k,*}\sO_{B^k(L)}((k+1)H-Z_{k-1})=A_{k+1, L}.
		$$
		By (2), $R^i\beta_{k,*}\sO_{B^k(L)}(-Z_{k-1})=0$ for $i>0$, so we obtain
		$$
		H^i(\Sigma_k, I_{\Sigma_{k-1}|\Sigma_k}(\ell)) = H^i(B^k(L), \sO_{B^k(L)}(\ell H - Z_{k-1})) = H^i(C_{k+1}, S^{\ell-k-1}E_{k+1, L} \otimes A_{k+1, L}).
		$$
		Thus (\ref{eq-strshf1}) holds automatically when $i \geq k+2$ or $1 \leq \ell \leq k$. It only remains to consider the case that $1 \leq i \leq k+1$ and $k+1 \leq \ell \leq 2k+2-i$.
		
		Now, the short exact sequence (\ref{eq:08})	induces a short exact sequence
		$$
		0 \longrightarrow  \wedge^{j+1} M_{\Sigma_k} \longrightarrow \wedge^{j+1} H^0(C, L) \otimes \sO_{\Sigma_k} \longrightarrow \wedge^{j} M_{\Sigma_k} \otimes \sO_{\Sigma_k}(1) \longrightarrow 0.
		$$
		Tensoring with $I_{\Sigma_{k-1}|\Sigma_k}$, we obtain a short exact sequence
		$$
		0 \longrightarrow  \wedge^{j+1} M_{\Sigma_k} \otimes I_{\Sigma_{k-1}|\Sigma_k} \longrightarrow \wedge^{j+1} H^0(C, L) \otimes I_{\Sigma_{k-1}|\Sigma_k} \longrightarrow \wedge^{j} M_{\Sigma_k} \otimes I_{\Sigma_{k-1}|\Sigma_k}(1) \longrightarrow 0.
		$$
		This gives a long exact sequence of cohomology groups
		$$\cdots \longrightarrow \wedge^{j+1} H^0(C, L) \otimes H^i(\Sigma_k, I_{\Sigma_{k-1}|\Sigma_k}(\ell)) \longrightarrow H^i(\Sigma_k, \wedge^{j} M_{\Sigma_k} \otimes I_{\Sigma_{k-1}|\Sigma_k}(\ell+1))
		\quad \quad\quad \quad \quad $$
		$$\hspace{7.8cm} \longrightarrow H^{i+1}(\Sigma_k, \wedge^{j+1} M_{\Sigma_k} \otimes I_{\Sigma_{k-1}|\Sigma_k}(\ell))\longrightarrow \cdots.$$
		It follows that the statement
		\begin{align}
		  H^i(\Sigma_k, \wedge^{j} M_{\Sigma_k} \otimes I_{\Sigma_{k-1}|\Sigma_k}(\ell))=0 ~\text{ for }i\geq 1, \ j\geq 0 \text{ and } i\geq j-p \tag*{$(*)_{\ell}$}
		\end{align}
		implies the corresponding statement $(*)_{\ell+1}$. Since Lemma \ref{keylemma} says that $(*)_{k+1}$ is true, we conclude that $(*)_{\ell}$ holds for $\ell\geq k+1$, i.e., 
		\begin{equation}\label{eq:01}
		H^i(\Sigma_k, \wedge^{j} M_{\Sigma_k} \otimes I_{\Sigma_{k-1}|\Sigma_k}(\ell))=0~ \text{ for }i\geq 1, \ j\geq 0,\  i\geq j-p \text{ and } \ell\geq k+1.
		\end{equation}	
		When $j=0$, this implies  (\ref{eq-strshf1}) for $i \geq 1$ and $\ell \geq k+1$. This finishes the proof of (3).
		
		\smallskip
		
\noindent	(4) We first show that $\Sigma_k \subseteq \nP^r$ is projectively normal. 
By Danila's theorem (Theorem \ref{danila}),
$$H^0(\nP^r, \sO_{\nP^r}(\ell))=S^{\ell}H^0(C, L)=H^0(B^k(L), \sO_{B^k(L)}(\ell))=H^0(\Sigma_k, \sO_{ \Sigma_{k}}(\ell)) ~\text{ for } 0\leq \ell\leq k+1.$$
For $0\leq \ell\leq k+1$, this implies that $H^0(\nP^r, I_{\Sigma_{k}}(\ell))=H^1(\nP^r, I_{\Sigma_{k}}(\ell))=0$,
where $I_{\Sigma_m}=I_{\Sigma_m|\nP^r}$ is the defining ideal sheaf of $\Sigma_m$ in $\nP^r$ for $0 \leq m \leq k$. We have a short exact sequence
\begin{equation}\label{eq:sesidealsheaves}
0 \longrightarrow I_{\Sigma_k} \longrightarrow I_{\Sigma_{k-1}} \longrightarrow I_{\Sigma_{k-1}|\Sigma_k} \longrightarrow 0.
\end{equation}
We then obtain $H^0(\nP^r, I_{\Sigma_{k-1}}(\ell))=H^0(\Sigma_k, I_{\Sigma_{k-1}|\Sigma_k}(\ell))$ for $0\leq \ell\leq k+1$. For $\ell \geq k+1$, consider the following commutative diagram
\begin{equation}\label{eq:multimap}
\xymatrix{
S^{\ell -k-1}H^0(C, L)\otimes H^0(\Sigma_k, I_{\Sigma_{k-1}}(k+1)) \ar@{=}[d] \ar[r] & H^0(\Sigma_k, I_{\Sigma_{k-1}}(\ell)) \ar[d]\\
S^{\ell -k-1}H^0(C, L)\otimes H^0(\Sigma_k, I_{\Sigma_{k-1}|\Sigma_k}(k+1)) \ar[r] & H^0(\Sigma_k, I_{\Sigma_{k-1}|\Sigma_k}(\ell)).
}
\end{equation}
By (\ref{eq:01}), $H^1(\Sigma_k, M_{\Sigma_k}\otimes I_{\Sigma_{k-1}|\Sigma_k}(\ell))=0$ for $\ell\geq k+1$. Then the multiplication map in the bottom of (\ref{eq:multimap}) is surjective, and hence, the right vertical map of (\ref{eq:multimap}) is surjective. We then conclude that the map $H^0(\nP^r, I_{\Sigma_{k-1}}(\ell))\rightarrow H^0(\Sigma_k, I_{\Sigma_{k-1}|\Sigma_k}(\ell))$ is surjective for $\ell\geq 0$. By induction, $\Sigma_{k-1} \subseteq \nP^r$ is projectively normal, so $H^1(\nP^r, I_{\Sigma_{k-1}}(\ell))=0$ for $\ell \geq 0$. Therefore, by considering (\ref{eq:sesidealsheaves}), we obtain $H^1(\nP^r, I_{\Sigma_k}(\ell))=0$ for $\ell\geq 0$, which means that $\Sigma_k \subseteq \nP^r$ is projectively normal.

Next we show that $\Sigma_k \subseteq \nP^r$ satisfies $N_{k+2,p}$. Recall from (3) that $H^i(\Sigma_k, \sO_{ \Sigma_{k}}(\ell))=0$ for $i\geq 1$ and $\ell \geq 1$. By Proposition \ref{koszh1}, we only need to show that $H^1(\Sigma_k, \wedge^jM_{\Sigma_{k}}\otimes\sO_{ \Sigma_{k}}(\ell))=0$ for $\ell \geq k+1$ and $1\leq j\leq p+1$. Consider the short exact sequence 
$$0\longrightarrow \wedge^j M_{\Sigma_k} \otimes {I}_{\Sigma_{k-1}|\Sigma_{k}}\longrightarrow \wedge^j M_{\Sigma_k}\longrightarrow \wedge^j M_{\Sigma_{k-1}}\longrightarrow 0.$$
Since $\deg L\geq2g+1+2(k-1)+1+p+2$, we may assume by induction that $\Sigma_{k-1} \subseteq \nP^r$ satisfies $N_{k+1,p+2}$. So by Proposition \ref{koszh1}, we have $H^1(\Sigma_{k-1}, \wedge^jM_{\Sigma_{k-1}}(\ell))=0$ for $\ell \geq k$ and $1\leq j\leq p+3$. Combine this with (\ref{eq:01}), we get $H^1(\Sigma_k, \wedge^jM_{\Sigma_{k}}(\ell))=0$ for $1\leq j\leq p+1$ and $\ell \geq k+1$ as desired.
\end{proof}

\begin{remark}
We have seen in the above proof that Danila's theorem (Theorem \ref{danila}) shows $H^0(\nP^r, \sO_{\nP^r}(\ell))=H^0(\Sigma_k, \sO_{\Sigma_k}(\ell))$ for all $1 \leq \ell \leq k+1$. This in particular implies  that the defining ideal of the $k$-th secant variety $\Sigma_k$ in $\nP^r$ has no forms of degree $\leq k+1$.
\end{remark}

\subsection{Singularities}\label{subsec:sing}

\begin{proposition}\label{sing} Let $k\geq 0$ be an integer, and $L$ be a line bundle on $C$. Assume that $$\deg L\geq 2g+2k+1.$$ 
Consider the $k$-th secant variety $\Sigma_{k}=\Sigma_{k}(C,L)$ in the space $\nP(H^0(C, L))$. Then one has the following:
	\begin{enumerate}
		\item $\Sigma_k$ has normal Du Bois singularities.
		\item $g=0$ if and only if there exists a boundary divisor $\Gamma$ on $\Sigma_k$ such that $(\Sigma_k, \Gamma)$ is a klt pair. 
		In this case, $\Sigma_k$ is a Fano variety with log terminal singularities and of Picard rank one.
		\item $g=1$ if and only if there exists a boundary divisor $\Gamma$ on $\Sigma_k$ such that $(\Sigma_k, \Gamma)$ is a log canonical pair but it cannot be a klt pair.
		In this case, $\Sigma_k$ is a Calabi--Yau variety with log canonical singularities.
	\end{enumerate}
		In particular, $g \geq 2$ if and only if there is no boundary divisor $\Gamma$ on $\Sigma_k$ such that $(\Sigma_k, \Gamma)$ is a log canonical pair.
\end{proposition}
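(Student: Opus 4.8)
The plan is to reduce the whole statement to a single canonical-bundle identity on the secant bundle, together with the normality and vanishing already proved in Theorem~\ref{normality}. Normality of $\Sigma_k$ is Theorem~\ref{normality}(1). Combining the formula for $K_{B^k(L)}$ obtained in the proof of Proposition~\ref{p:01} with $\sO_{B^k(L)}((k+1)H-Z_{k-1})=\pi_k^{*}A_{k+1,L}$, with $A_{k+1,L}=T_{k+1}(L)(-2\delta_{k+1})$ and with $\omega_{C_{k+1}}=T_{k+1}(\omega_C)(-\delta_{k+1})$, the $\delta$'s cancel and one gets the clean identity
\[
K_{B^k(L)}+Z_{k-1}=\pi_k^{*}T_{k+1}(\omega_C),
\]
which, pushed through the log resolution $b_k\colon\bbl_k(B^k(L))\to B^k(L)$ of Proposition~\ref{p:01}(3), becomes $K_{\bbl_k(B^k(L))}=(\pi_k\circ b_k)^{*}T_{k+1}(\omega_C)-(E_0+\cdots+E_{k-1})$. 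Every assertion will be read off from these two equalities by restriction to a well-chosen curve and by pushing forward. Throughout, the discrepancy of $\Sigma_k$ along the divisor $Z_{k-1}$ is realized by $E_{k-1}$ (the strict transform of $Z_{k-1}$) on $\bbl_k(B^k(L))$.

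For part (1) I would induct on $k$, the case $\Sigma_0=C$ being trivial, so assume $\Sigma_{k-1}$ is Du Bois. Since $\Sigma_k\setminus\Sigma_{k-1}$ is smooth and $E_0+\cdots+E_{k-1}$ is a simple normal crossing divisor on the smooth variety $\bbl_k(B^k(L))$, it suffices, by a Du Bois criterion of Kov\'acs--Schwede (cf.\ \cite{K}), to identify $R(\beta_k\circ b_k)_*\sO\bigl(-(E_0+\cdots+E_{k-1})\bigr)$ with $I_{\Sigma_{k-1}|\Sigma_k}$, concentrated in degree $0$. By Proposition~\ref{p:01}(3) one has $-(E_0+\cdots+E_{k-1})=-b_k^{*}Z_{k-1}+K_{\bbl_k(B^k(L))/B^k(L)}$, so relative Grauert--Riemenschneider gives $Rb_{k,*}\sO(-(E_0+\cdots+E_{k-1}))=\sO_{B^k(L)}(-Z_{k-1})$; composing with $\beta_k$ and using normality together with $R^{>0}\beta_{k,*}\sO_{B^k(L)}(-Z_{k-1})=0$ (Theorem~\ref{normality}(2)) yields the identification, hence $\Sigma_k$ is Du Bois. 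Note that this does not go through log canonicity, which fails precisely when $g\ge2$.

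The core of (2), (3) and the final statement is the discrepancy of $\Sigma_k$ along $Z_{k-1}$. Fix a general point $x\in\Sigma_{k-1}\setminus\Sigma_{k-2}$; then $F_x:=\beta_k^{-1}(x)\cong C$ by Proposition~\ref{p:02}(2.a), its strict transform $\widetilde F_x$ is isomorphic to $F_x$ (the blow-up centers other than $\widetilde Z_{k-1}$ miss $F_x$), among $E_0,\dots,E_{k-1}$ it meets only $E_{k-1}$, with $\deg\bigl(E_{k-1}|_{\widetilde F_x}\bigr)=\deg\bigl(N_{Z_{k-1}/B^k(L)}|_{F_x}\bigr)=2k-\deg L$ by Proposition~\ref{p:02}(2.c) (using $E_{1,M}=M$), while $\deg\bigl(T_{k+1}(\omega_C)|_{F_x}\bigr)=\deg\omega_C=2g-2$ (iterate $T_{j+1}(\omega_C)|_{X_p}=T_j(\omega_C)$ along the points of $\xi_{k,x}$). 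For any boundary $\Gamma$ with $K_{\Sigma_k}+\Gamma$ being $\nQ$-Cartier, write $K_{\bbl_k(B^k(L))}=(\beta_k\circ b_k)^{*}(K_{\Sigma_k}+\Gamma)+\sum_F a(F;\Sigma_k,\Gamma)F$ and restrict to $\widetilde F_x$ (a general $x$ guarantees $\widetilde F_x$ is not contained in the strict transform of $\Gamma$, so $\Gamma$ contributes with the right sign); comparing degrees with the displayed identity forces
\[
a(Z_{k-1};\Sigma_k,\Gamma)=-1+\frac{2g-2+c}{2k-\deg L}\qquad\text{for some }c\ge 0 .
\]
Since $\deg L\ge 2g+2k+1$ makes $2k-\deg L<0$: if $g\ge2$ the right-hand side is $<-1$ for \emph{every} $\Gamma$, so no boundary makes $(\Sigma_k,\Gamma)$ log canonical -- this is the final statement, and it also shows that the existence of a log canonical (a fortiori klt) pair forces $g\le1$; if $g=1$ the discrepancy is $\le-1$, with equality only when $c=0$, so no klt pair exists.

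It remains to produce the pairs. When $g=1$, $\omega_C=\sO_C$, so $K_{B^k(L)}=-Z_{k-1}$ and hence $\omega_{\Sigma_k}\cong\sO_{\Sigma_k}$; then $K_{\bbl_k(B^k(L))}=-(E_0+\cdots+E_{k-1})$ with $(\beta_k\circ b_k)^{*}K_{\Sigma_k}=0$ reads off $a(E_i;\Sigma_k,0)=-1$ for all $i$, and since $\bbl_k(B^k(L))\to\Sigma_k$ is a log resolution of $(\Sigma_k,0)$, the variety $\Sigma_k$ is Calabi--Yau with log canonical, non--log terminal, singularities. When $g=0$, $C_{k+1}=\nP^{k+1}$ and $T_{k+1}(\sO_C(p))=\sO_{\nP^{k+1}}(1)$; the same bookkeeping in $\Pic$ gives $\beta_k^{*}K_{\Sigma_k}=-\tfrac{2(k+1)}{\deg L-2k}H$, so $K_{\Sigma_k}$ is $\nQ$-Cartier, $-K_{\Sigma_k}=\tfrac{2(k+1)}{\deg L-2k}\sO_{\Sigma_k}(1)$ is ample, and $a(E_j;\Sigma_k,0)=-1+\tfrac{2(k-j)}{\deg L-2k}>-1$ for every $j$; thus $\Sigma_k$ is Fano with log terminal singularities, of Picard rank one since $\Cl(\Sigma_k)=\Cl(B^k(L))/\langle Z_{k-1}\rangle$ has rank one. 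Together with the previous paragraph this gives all the equivalences. The main obstacle I anticipate is bookkeeping through the blow-up tower -- getting the restrictions of $H$, of the $E_i$, and of $K$ to the test curve $\widetilde F_x$ exactly right -- and, for part (1), having the sharp form of the Du Bois criterion available, since the easy route ``normal and log canonical $\Rightarrow$ Du Bois'' is unavailable in the range $g\ge2$ where the theorem is most new.
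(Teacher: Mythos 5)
Your proposal is correct, and while it is built on the same skeleton as the paper's proof (the identity $K_{B^k(L)}+Z_{k-1}\sim_{\lin}\pi_k^{*}T_{k+1}(\omega_C)$, restriction to the fiber $F_x\cong C$ over a general $x\in\Sigma_{k-1}\setminus\Sigma_{k-2}$, and the Du Bois criterion from \cite{K}), it diverges in three concrete places, each legitimately. For (1), the paper applies \cite[Corollary 6.28]{K} to $\beta_k$ itself with $E=Z_{k-1}$, which forces it to prove that $Z_{k-1}$ is Du Bois by showing $(B^k(L),Z_{k-1})$ is log canonical, hence $Z_{k-1}$ is semi-log canonical; you instead apply the criterion to the composite $\beta_k\circ b_k$ with the reduced SNC divisor $E_0+\cdots+E_{k-1}$ (automatically Du Bois), paying for it with the relative-duality computation $Rb_{k,*}\sO(-(E_0+\cdots+E_{k-1}))=\sO_{B^k(L)}(-Z_{k-1})$, which is correct since $-(E_0+\cdots+E_{k-1})=K_{\bbl_k(B^k(L))/B^k(L)}-b_k^{*}Z_{k-1}$ by Proposition \ref{p:01}(3). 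For the forward direction of (2), the paper shows $(B^k(L),(1-\epsilon)Z_{k-1})$ is a klt Fano-type pair and descends via \cite[Theorem 5.1]{FG}, and for the converse it invokes rational chain connectedness \cite[Corollary 1.5]{HM}; you instead compute all discrepancies $a(E_j;\Sigma_k,0)=-1+\tfrac{2(k-j)}{\deg L-2k}>-1$ explicitly (your numbers check out, including the $\nQ$-Cartierness of $K_{\Sigma_k}$ via $\Cl(\Sigma_k)\cong\Pic(B^k(L))/\langle Z_{k-1}\rangle$), which is more self-contained and replaces both citations. Finally, your uniform formula $a(Z_{k-1};\Sigma_k,\Gamma)=-1+\tfrac{2g-2+c}{2k-\deg L}$ with $c=\deg(\widetilde\Gamma|_{\widetilde F_x})\ge 0$ is exactly the degree computation hidden in the paper's equation $K_C+(\beta_k^{-1}\Gamma)|_C=-(1+a)(L-2\xi)$, just packaged so that all three converse implications drop out at once. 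The only cosmetic caveats: the finer statements implicitly assume $k\ge1$ (for $k=0$ and $g=1$, $\Sigma_0=C$ is smooth, hence klt), an edge case the paper glosses over as well; and one should say explicitly that the Du Bois hypothesis on $Z=\Sigma_{k-1}$ in the criterion is supplied by your induction, which you do set up.
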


\begin{proof}
\noindent	(1) By Theorem \ref{normality} (1), we know that $\Sigma_k$ is normal.
By proceeding by the induction on $k$, we show that $\Sigma_k$ has Du Bois singularities. If $k=0$, then $\Sigma_0=C$ so that the assertion is trivial. In the sequel, we assume that $k \geq 1$ and the assertion (1) holds for $k-1$. By \cite[Corollary 6.28]{K}, it suffices to check the following:
	\begin{enumerate}
		\item[(a)] $\Sigma_{k-1}$ has Du Bois singularities. 
		\item[(b)] $Z_{k-1}$ has Du Bois singularities.
		\item[(c)] $\beta_{k,*}\sO_{B^k(L)}(-Z_{k-1}) = {I}_{\Sigma_{k-1}|\Sigma_k}$ and $R^i \beta_{k,*}\sO_{B^k(L)}(-Z_{k-1})=0$ for $i > 0$.
	\end{enumerate}
	By inductive hypothesis, (a) holds. For (b), consider the composition map $b_k \colon \text{bl}_k(B^k(L)) \to B^k(L)$ of blowups (see Subsection \ref{subsec:blowup}). Recall from Proposition \ref{p:01} (3) that
	$$
	K_{ \text{bl}_k(B^k(L))} = b_k^* ( K_{B^k(L)} + Z_{k-1}) - (E_0 + \cdots + E_{k-1}).
	$$
	Thus the log pair $(B^k(L), Z_{k-1})$ is log canonical, and hence, $Z_{k-1}$ has semi-log canonical singularities. Then,  by \cite[Corollary 6.32]{K}, $Z_{k-1}$ has Du Bois singularities, i.e., (b) holds.
	Finally, (c) holds by Theorem \ref{normality}. 
	
	\smallskip
	
\noindent	(2), (3) Recall that $\beta_k \colon B^k(L) \to \Sigma_k$ is a resolution of singularities and $\Sigma_k$ is normal.
For a general point $x \in \Sigma_{k-1} \setminus \Sigma_{k-2}$, we denote by $F_x:=\beta_k^{-1}(x)$ the fiber of $\beta_k$ over $x$. Note that $F_x \cong C$.
Let $H$ be the tautological divisor on $B^k(L)=\nP(E_{k+1, L})$, i.e., $\sO_{B^k(L)}(H)=\sO_{B^k(L)}(1)$. Recall from Proposition \ref{p:01} (2) that $Z_{k-1} \sim_{\lin} (k+1)H-\pi_k^*(T_{k+1}(L)-2\delta_{k+1})$. We can easily check that
	\begin{equation}\label{eq-K}
	K_{B^k(L)} + Z_{k-1} \sim_{\lin} \pi_k^*(K_{C_{k+1}}+\delta_{k+1}) =\pi_k^*T_{k+1}(K_C).
	\end{equation}
	
	We first prove (2). Suppose that $C=\nP^1$. It is well known that $C_{n+1} \cong \nP^{n+1}$. 
	For a sufficiently small rational number $\epsilon > 0$, by (\ref{eq-K}), we have
	$$
	-(K_{B^k(L)}+(1-\epsilon) Z_{k-1}) \sim_{\mathbb{Q}\text{-lin}} \epsilon (k+1)H + \pi_k^*(T_{k+1}(-K_C - \epsilon L)+2\epsilon \delta_{k+1}).
	$$
	We may assume that $T_{k+1}(-K_C - \epsilon L)+2\epsilon \delta_{k+1}$ is ample on $C_{k+1}$.
	Now, $B^k(L)$ has Picard rank two, and the nef cone of $B^k(L)$ is generated by $H$ and $\pi_k^*(T_{k+1}(-K_C - \epsilon L)+2\epsilon \delta_{k+1})$. Thus $-(K_{B^k(L)}+(1-\epsilon) Z_{k-1})$ is ample. By considering the log resolution of $(B^k(L), (1-\epsilon) Z_{k-1})$ in Proposition \ref{p:01} (3), we see that $(B^k(L), (1-\epsilon) Z_{k-1})$ is a klt pair. 
	Hence $B^k(L)$ is of Fano type. 
	By \cite[Theorem 5.1]{FG}, $\Sigma_k$ is also of Fano type. Now, $\Sigma_k$ has Picard rank one. Therefore, it is a Fano variety with log terminal singularities. For the converse, suppose that there exists a boundary divisor $\Gamma$ such that $(\Sigma_k, \Gamma)$ is a klt pair. By \cite[Corollary 1.5]{HM}, $F_x \cong C$ is rationally chain connected, so $C$ is a rational curve.
	
	We finally prove (3). Suppose that $C$ is an elliptic curve. By (\ref{eq-K}), we have
	$$
	K_{B^k(L)}+Z_{k-1} \sim_{\lin} \pi_k^* T_{k+1}(K_C) = 0.
	$$
	Then the `only if' direction immediately follows from \cite[Lemma 1.1]{FG}. 
	In this case, we actually have $K_{\Sigma_k}=\beta_{k,*}(K_{B^k(L)}+Z_{k-1})=0$. Thus $\Sigma_k$ is a Calabi--Yau variety with log canonical singularities.
	For the converse, suppose that there exists a boundary divisor $\Gamma$ such that $(\Sigma_k, \Gamma)$ is a log canonical pair. We have
	$$
	K_{B^k(L)}+Z_{k-1} + \beta_k^{-1}\Gamma  = \beta_k^*(K_{\Sigma_k}+\Gamma) + (1+a)Z_{k-1},
	$$
	where $a=a(Z_{k-1}; \Sigma_k, \Gamma) \geq -1$ is the discrepancy of the $\beta_k$-exceptional divisor $Z_{k-1}$.
	By restricting the above divisor to $F_x \cong C$, we obtain
	$$
	K_C + (\beta_k^{-1}\Gamma)|_C = -(1+a)(L-2\xi),
	$$
	where $\xi:=\xi_{k,x}$ is the degree $k$ divisor on $C$ determined by $x$.
	Then 
	$$
	-K_C=(1+a)(L-2\xi) + (\beta_k^{-1}\Gamma)|_C
	$$
	is effective so that $C$ is either a rational curve or an elliptic curve. This proves the converse direction, and hence, we complete the proof.
\end{proof}

\begin{remark}
It is easy to check that $g=0$ if and only if $\Sigma_k$ has rational singularities (cf. \cite[Proposition 9]{Vermeire:RegNormSecCurve}).
\end{remark}

\begin{remark}
When $g=1$, we see that $\Sigma_k$ is Gorenstein with $\omega_{\Sigma_k}\cong \sO_{ \Sigma_{k}}$ (this is also proved in \cite[8.14]{GBH}). In the next subsection, we show that $\Sigma_k \subseteq \nP(H^0(C, L))$ is arithmetically Cohen--Macaulay, and therefore, its cone is Gorenstein. For instance, one can deduce that the $k$-th secant variety $\Sigma_k$ of an elliptic curve embedded by a degree $2k+4$ line bundle is a complete intersection in $\nP^{2k+3}$.
\end{remark}

\begin{remark}\label{rmk:ellipic}
	In contrast to the smaller genus case, 
	if $g \geq 2$, then $\Sigma_k$ is not $\mathbb{Q}$-Gorenstein, i.e., $K_{\Sigma_k}$ is not $\mathbb{Q}$-Cartier. 
	To show this, suppose that $K_{\Sigma_k}$ is $\mathbb{Q}$-Cartier. For a sufficiently divisible integer $m > 0$, we have $mK_{B^k(L)}-maZ_{k-1} \sim_{\lin} \beta_k^*(mK_{\Sigma_k})$,
	where $a=a(Z_{k-1};\Sigma_k,0) < -1$ is the discrepancy of $Z_{k-1}$. By restricting to $\beta_k^{-1}(x) \cong C_k$ for any point $x \in C \subseteq \Sigma_k$, we see that
	$$
	m\big(T_k(K_C+(1-a)L - 2(1-a)x \big) -2(1-a)\delta_k \sim_{\lin} 0.
	$$
	Thus we obtain $2m(1-a)x \sim_{\lin} 2m(1-a)y$ for any points $x,y \in C$, but it is impossible.
\end{remark}

\subsection{Arithmetic Cohen--Macaulayness and Castelnuovo--Mumford regularity}\label{subsec:acm}


\begin{theorem}\label{p:12} Let $k\geq 0$ be an integer, and $L$ be a line bundle on $C$. Assume that $$\deg L\geq 2g+2k+1.$$ Consider the $k$-th secant variety $\Sigma_{k}=\Sigma_{k}(C,L)$ in the space $\nP(H^0(C, L))=\nP^r$. Then one has the following:
	\begin{enumerate}
		\item $H^i(\Sigma_k, \sO_{\Sigma_k}(-\ell))=0$ for $1 \leq i \leq 2k$ and $\ell \geq 0$.		\item $H^{2k+1}(\Sigma_k, \sO_{\Sigma_k})=S^{k+1}H^0(C, \omega_C)^*$.
	\end{enumerate}
In particular, $\Sigma_k \subseteq \nP^r$ is arithmetically Cohen--Macaulay.
\end{theorem}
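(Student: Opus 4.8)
The plan is to prove (1) and (2) simultaneously by induction on $k$ and then to deduce arithmetic Cohen--Macaulayness from them. First I would record the inputs from Theorem \ref{normality}: $\Sigma_k$ is normal and projectively normal, $H^i(\Sigma_k,\sO_{\Sigma_k}(\ell))=0$ for $i>0$ and $\ell>0$, and $R^i\beta_{k,*}\sO_{B^k(L)}(-Z_{k-1})=0$ for $i>0$. Since a projectively normal subvariety $X\subseteq\nP^r$ of dimension $n$ is arithmetically Cohen--Macaulay precisely when $H^i(X,\sO_X(m))=0$ for $1\le i\le n-1$ and all $m\in\nZ$, the last assertion of the theorem reduces to (1) (the twists $m>0$ being covered by Theorem \ref{normality} and the twists $m\le 0$ by (1) with $\ell=-m$). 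The base case $k=0$ is immediate: $\Sigma_0=C$, (1) is vacuous, (2) is Serre duality $H^1(C,\sO_C)\cong H^0(C,\omega_C)^{*}$, and $\Sigma_0$ is arithmetically Cohen--Macaulay since it is projectively normal by Green's theorem.

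For the inductive step the main device is a translation of $H^i(\Sigma_k,I_{\Sigma_{k-1}|\Sigma_k}(-\ell))$ onto the symmetric product $C_{k+1}$. Writing $J_k:=I_{\Sigma_{k-1}|\Sigma_k}$ and $H$ for the tautological divisor on $B^k(L)=\nP(E_{k+1,L})$, I would combine: (i) $\beta_{k,*}\sO_{B^k(L)}(-Z_{k-1})=J_k$ with $R^{>0}\beta_{k,*}\sO_{B^k(L)}(-Z_{k-1})=0$, so that $H^i(\Sigma_k,J_k(-\ell))=H^i(B^k(L),\sO_{B^k(L)}(-\ell H-Z_{k-1}))$; (ii) Serre duality on the smooth $(2k{+}1)$-fold $B^k(L)$; (iii) the relation $\omega_{B^k(L)}\otimes\sO_{B^k(L)}(Z_{k-1})=\pi_k^*T_{k+1}(\omega_C)$ coming from (\ref{eq-K}); and (iv) $\pi_{k,*}\sO_{B^k(L)}(\ell H)=S^{\ell}E_{k+1,L}$ with $R^{>0}\pi_{k,*}\sO_{B^k(L)}(\ell H)=0$ for $\ell\ge 0$. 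Together these give, for all $\ell\ge 0$,
$$
H^i\big(\Sigma_k,\,J_k(-\ell)\big)\;\cong\;H^{2k+1-i}\big(C_{k+1},\,T_{k+1}(\omega_C)\otimes S^{\ell}E_{k+1,L}\big)^{*}.
$$
For $\ell=0$, Lemma \ref{K-vanishing} evaluates the right-hand side: it equals $\big(S^{k+1}H^0(C,\omega_C)\big)^{*}$ for $i=2k+1$, equals $\big(S^{k}H^0(C,\omega_C)\big)^{*}$ for $i=2k$, and vanishes for $i\le 2k-1$. For $\ell\ge 1$ I would separately prove that $H^m(C_{k+1},T_{k+1}(\omega_C)\otimes S^{\ell}E_{k+1,L})=0$ for $m\ge 2$; after rewriting it as $H^m(B^k(L),\omega_{B^k(L)}(Z_{k-1}+\ell H))$ this should follow either from a vanishing theorem on $B^k(L)$ or, after pulling back along $q_{k+1}$ and passing to $\mathfrak{S}_{k+1}$-invariants, from a cohomology estimate on $C^{k+1}$ in the spirit of Section \ref{sec:vanishing}.

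With this information I would run the long exact sequence attached to
$$
0\longrightarrow J_k(-\ell)\longrightarrow \sO_{\Sigma_k}(-\ell)\longrightarrow \sO_{\Sigma_{k-1}}(-\ell)\longrightarrow 0.
$$
By the inductive hypothesis $H^i(\Sigma_{k-1},\sO_{\Sigma_{k-1}}(-\ell))=0$ for $1\le i\le 2k-2$ (and it is $0$ for $i>2k-1$ by dimension), so combined with the vanishing of $H^i(\Sigma_k,J_k(-\ell))$ for $i\le 2k-1$ this yields $H^i(\Sigma_k,\sO_{\Sigma_k}(-\ell))=0$ for $1\le i\le 2k-2$; and at $i=2k+1$ the sequence gives directly $H^{2k+1}(\Sigma_k,\sO_{\Sigma_k})\cong H^{2k+1}(\Sigma_k,J_k)\cong\big(S^{k+1}H^0(C,\omega_C)\big)^{*}$, which (noting $\dim S^{k+1}H^0(C,\omega_C)=\binom{g+k}{k+1}$) is exactly (2).

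The remaining and genuinely delicate point is the range $i\in\{2k-1,2k\}$: there $H^{2k}(\Sigma_k,J_k(-\ell))$ and $H^{2k-1}(\Sigma_{k-1},\sO_{\Sigma_{k-1}}(-\ell))$ need not vanish, and I would need to show that the connecting homomorphism
$$
\partial\colon H^{2k-1}\big(\Sigma_{k-1},\sO_{\Sigma_{k-1}}(-\ell)\big)\longrightarrow H^{2k}\big(\Sigma_k,J_k(-\ell)\big)
$$
is an isomorphism, which (given the vanishings already in hand) is equivalent to $H^{2k-1}(\Sigma_k,\sO_{\Sigma_k}(-\ell))=H^{2k}(\Sigma_k,\sO_{\Sigma_k}(-\ell))=0$. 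This is the main obstacle. I would attack it by identifying both sides explicitly — the target via the displayed isomorphism, and the source via the same isomorphism applied one step down to $\Sigma_{k-1}$, $B^{k-1}(L)$ and $C_k$ — and then recognizing $\partial$ as (dual to) a restriction-type map along the ample divisor $X_p\cong C_k\subseteq C_{k+1}$, using $T_{k+1}(\omega_C)|_{X_p}=T_k(\omega_C)$ and the compatibility of $\alpha_{k-1,k}$ with $\beta_k$ and $\beta_{k-1}$, thereby reducing the claim to a bijection between concrete cohomology groups on symmetric products. Once (1) and (2) are established, arithmetic Cohen--Macaulayness of $\Sigma_k\subseteq\nP^r$ follows from the criterion recalled at the start.
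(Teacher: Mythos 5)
Your overall architecture coincides with the paper's: induction on $k$, the ideal-sheaf sequence for $\Sigma_{k-1}\subseteq\Sigma_k$, the transfer
$H^i(\Sigma_k,I_{\Sigma_{k-1}|\Sigma_k}(-\ell))\cong H^{2k+1-i}(C_{k+1},S^{\ell}E_{k+1,L}\otimes T_{k+1}(\omega_C))^{*}$ via $R\beta_{k,*}$, Serre duality on $B^k(L)$ and the relation $K_{B^k(L)}+Z_{k-1}\sim\pi_k^{*}T_{k+1}(\omega_C)$, Lemma \ref{K-vanishing} for $\ell=0$, and the analysis of the connecting map in degrees $2k-1,2k$. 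However, there is a genuine gap in your treatment of the twists $\ell\geq 1$. You need, for every $\ell\geq 1$, the vanishing $H^{m}(C_{k+1},S^{\ell}E_{k+1,L}\otimes T_{k+1}(\omega_C))=0$ for $m\geq 2$ (and likewise the isomorphism statement for the connecting map for every $\ell$), and you only assert that this "should follow" from a vanishing theorem on $B^k(L)$ or from Section \ref{sec:vanishing}. Neither source delivers it: Theorem \ref{vanishing} concerns exterior powers of the kernel bundle $Q_{k+1,B}$ twisted by $L^{\boxtimes k+1}(-\Delta_{k+1})$, not symmetric powers of $E_{k+1,L}$ twisted by $T_{k+1}(\omega_C)$, and the divisor $Z_{k-1}+\ell H$ on $B^k(L)$ is only (effective)$+$(nef), so no standard Kodaira/Kawamata--Viehweg statement applies. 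Attempting to prove the vanishing by rewriting it as $H^{j}(\Sigma_k,I_{\Sigma_{k-1}|\Sigma_k}(-\ell))$ is circular.

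The missing idea is the paper's reduction to $\ell\in\{0,1\}$: since $\Sigma_k$ has Du Bois singularities (Proposition \ref{sing}(1)), \cite[Theorem 10.42]{K} gives $h^{i}(\Sigma_k,\sO_{\Sigma_k}(-\ell))=h^{i}(\Sigma_k,\sO_{\Sigma_k}(-1))$ for all $1\leq i\leq 2k$ and $\ell\geq 1$, so statement (1) only has to be checked for $\ell=0,1$. For $\ell=1$ the needed cohomology is computed by the pushforward identity $\sigma_{k+1,*}\bigl(T_k(\omega_C)\boxtimes(\omega_C\otimes L)\bigr)=E_{k+1,L}\otimes T_{k+1}(\omega_C)$ together with the K\"unneth formula and Lemma \ref{K-vanishing}; this identity is special to $\ell=1$ and does not extend to $S^{\ell}E_{k+1,L}$ for $\ell\geq 2$, which is precisely why the Du Bois reduction is not a convenience but the step that makes the argument close. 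With that reduction inserted, the rest of your plan (including the identification of the connecting map as dual to a restriction-type map and the dimension count via Lemma \ref{K-vanishing}) matches the paper's proof of Claim \ref{claim:acm1} and goes through.
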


\begin{proof} 
We first recall from Proposition \ref{sing} (1) that $\Sigma_k$ has Du Bois singularities. By \cite[Theorem 10.42]{K}, we have
	$$
	h^i(\Sigma_k, \sO_{\Sigma_k}(-\ell)) = h^i(\Sigma_k, \sO_{\Sigma_k}(-1)) ~\text{ for $1 \leq i \leq 2k$ and $\ell \geq 1$.}
	$$
	Therefore, the result (1) is equivalent to the cohomology vanishing
	$$
	H^i(\Sigma_k, \sO_{\Sigma_k}(-\ell))=0 ~\text{ for $1 \leq i \leq 2k$ and $\ell=0,1$}.
	$$
	
	We now proceed by the induction on $k$. Note that the case with $k=0$ is trivial. For $k \geq 1$, we assume that $\Sigma_{k-1} \subseteq \uP^r$ has results (1) and (2). Concerning the cohomological long exact sequence associated to the short exact sequence (\ref{eq:07}), we make the following:
	
	\begin{claim}\label{claim:acm1}$ $
		\begin{enumerate}
			\item [(a)] $H^i(\Sigma_k, I_{\Sigma_{k-1}|\Sigma_k}(-\ell))=0~\text{ for $1 \leq i \leq 2k-1$ and $\ell=0,1$}.$
			\item [(b)] The connection map $\tau_{\ell}$ of the cohomological groups
				$$
		\cdots \longrightarrow H^{2k-1}(\sO_{\Sigma_{k-1}}(-\ell)) \stackrel{\tau_{\ell}}{\longrightarrow} H^{2k}(I_{\Sigma_{k-1}|\Sigma_k}(-\ell)) \longrightarrow \cdots
			$$
			is an isomorphism for $\ell=0,1$.
		\end{enumerate}
	\end{claim}

Granted the claim for the moment, using inductive hypothesis on $\Sigma_{k-1}$ and chasing through the long exact sequence associated to (\ref{eq:07}), we immediately obtain from (a) that
	$$
H^i(\Sigma_k, \sO_{\Sigma_k}(-\ell))=0 ~\text{ for $1 \leq i \leq 2k-2$ and $\ell=0,1$}.
$$ 
Furthermore, we arrive at an exact sequence involving the connection map $\tau_\ell$ as follows 
\begingroup\makeatletter\def\f@size{10}\check@mathfonts
	$$
	0 \longrightarrow H^{2k-1}(\sO_{\Sigma_k}(-\ell)) \longrightarrow H^{2k-1}(\sO_{\Sigma_{k-1}}(-\ell)) \stackrel{\tau_{\ell}}{\longrightarrow} H^{2k}(I_{\Sigma_{k-1}|\Sigma_k}(-\ell)) \longrightarrow H^{2k}(\sO_{\Sigma_k}(-\ell)) \longrightarrow 0.
	$$
\endgroup
The statement (b) then implies that
$$
H^i(\Sigma_k, \sO_{\Sigma_k}(-\ell))=0 ~\text{ for $2k-1 \leq i \leq 2k$ and $\ell=0,1$},
$$
which proves (1).	
	For the result (2), chasing through the long exact sequence would yield 
	$$
	H^{2k+1}(\Sigma_k, \sO_{\Sigma_k})=H^{2k+1}(\Sigma_k, I_{\Sigma_{k-1}|\Sigma_k}).
	$$
	By Theorem \ref{normality} (2) and Serre duality, for any $i$ and $\ell$, we have 
	$$
	H^{i}(I_{\Sigma_{k-1}|\Sigma_k}(-\ell))=H^{i}(\sO_{B^k(L)}(-\ell H-Z_{k-1})) = H^{2k+1-i}( \sO_{B^k(L)}(K_{B^k(L)} + Z_{k-1}+\ell H))^*,
	$$
	where $H$ is the tautological divisor on $B^k(L)=\nP(E_{k+1,L})$. 
	Recall from (\ref{eq-K}) that
	$$
	K_{B^k(L)} + Z_{k-1} \sim_{\lin} \pi_k^*(K_{C_{k+1}}+\delta_{k+1}) =\pi_k^*T_{k+1}(K_C).
	$$
	Thus we obtain
	\begin{equation}\label{eq-cohacm}
	H^{i}(\Sigma_k, I_{\Sigma_{k-1}|\Sigma_k}(-\ell)) = H^{2k+1-i}(C_{k+1}, S^{\ell} E_{k+1, L} \otimes T_{k+1}(\omega_C))^*.
	\end{equation}
	In particular, when $i=2k+1$, we find
	$$
	H^{2k+1}(\Sigma_k, \sO_{\Sigma_k})=H^{2k+1}(\Sigma_k, I_{\Sigma_{k-1}|\Sigma_k}) = H^0(C_{k+1}, T_{k+1}(\omega_C))^*.
	$$
	By Lemma \ref{K-vanishing}, we get the result (2).
	
	We now prove Claim \ref{claim:acm1} (a). 
	Assume that $\ell=0$. As calculated in (\ref{eq-cohacm}), we have
	$$
	H^i(\Sigma_k, I_{\Sigma_{k-1}|\Sigma_k})=H^{2k+1-i}(C_{k+1}, T_{k+1}(\omega_C) )^*.
	$$
	Then  Lemma \ref{K-vanishing} implies Claim \ref{claim:acm1} (a) for $\ell=0$.
	Assume that $\ell=1$. By (\ref{eq-cohacm}), we have
	$$
	H^i(\Sigma_k, I_{\Sigma_{k-1}|\Sigma_k}(-1))=H^{2k+1-i}(C_{k+1}, E_{k+1, L} \otimes T_{k+1}(\omega_C) )^*.
	$$
	Recall that we have a canonical morphism $\sigma_{k+1} \colon C_k \times C \to C_{k+1}$. We observe that 
	$$
	\sigma_{k+1,*} (T_k(\omega_C) \boxtimes (\omega \otimes L)) = E_{k+1,L} \otimes T_{k+1}(\omega_C).
	$$
	Then we find
	\begin{equation}\label{eq-cohacm2}
	H^{2k+1-i}(C_{k+1},  E_{k+1,L} \otimes T_{k+1}(\omega_C))=H^{2k+1-i}(C_k \times C, T_k(\omega_C) \boxtimes (\omega_C \otimes L)).
	\end{equation}
	For $1 \leq i \leq 2k-1$, we have $2k+1-i \geq 2$.
	By Lemma \ref{K-vanishing} and K\"{u}nneth formula, we get
	$$
	H^{2k+1-i}(C_k \times C, T_k(\omega_C) \boxtimes (\omega_C\otimes L))=0.
	$$ 
	This implies Claim \ref{claim:acm1} (a) for $\ell=1$.

	We next turn to the proof of Claim \ref{claim:acm1} (b). 
	By Theorem  \ref{normality} (2) for both $\Sigma_k$ and $\Sigma_{k-1}$ and calculation in (\ref{eq-cohacm}), we recall that
	$$
	\def\arraystretch{1.5}
	\begin{array}{l}
	H^{2k}(I_{\Sigma_{k-1}|\Sigma_k}(-\ell))^*=H^1(\omega_{B^k(L)}(Z_{k-1}+\ell H)) = H^{1}(S^{\ell}E_{k+1, L} \otimes T_{k+1}(\omega_C)),\\
	H^{2k-1}(\sO_{\Sigma_{k-1}}(-\ell))^* =H^0(\omega_{B^{k-1}(L)}(Z_{k-2}+\ell H))=H^0(S^{\ell}E_{k,L} \otimes T_k(\omega_C)).
	\end{array}
	$$
	For $\ell=0$, by Lemma \ref{K-vanishing}, we have
	$h^{2k}(\Sigma_k, I_{\Sigma_{k-1}|\Sigma_k})=h^{2k-1}(\Sigma_{k-1}, \sO_{\Sigma_{k-1}}).$
	For $\ell=1$, by (\ref{eq-cohacm2}) and K\"{u}nneth formula, we see that 
	\begin{equation}\label{eq-cohacm3}
	\def\arraystretch{1.5}
	\begin{array}{l}
	H^1(E_{k+1, L} \otimes T_{k+1}(\omega_C)) = H^1(T_k(\omega_C) \boxtimes (\omega_C \otimes L)) = H^1(T_k(\omega_C)) \otimes H^0(\omega_C \otimes L),\\
	H^0(E_{k, L} \otimes T_k(\omega_C))=H^0(T_{k-1}(\omega_C) \boxtimes (\omega_C \otimes L))=H^0(T_{k-1}(\omega_C)) \otimes H^0(\omega_C \otimes L).
	\end{array}
	\end{equation}
	Lemma \ref{K-vanishing} then implies that $h^{2k}(\Sigma_k, I_{\Sigma_{k-1}|\Sigma_k}(-\ell))=h^{2k-1}(\Sigma_{k-1}, \sO_{\Sigma_{k-1}}(-\ell)).$
	Thus, to show Claim \ref{claim:acm1} (b), it is sufficient to show that $\tau_{\ell}$ is injective for $\ell=0,1$.
	
	To this end, recall that we have the following commutative diagram
	\[
	\xymatrix{
		C_k \times C \ar[rr]^-{\sigma_{k+1}} & & C_{k+1}\\
		B^{k-1}(L) \times C \ar[u]^-{\pi_k \times \id_C} \ar[r]^-{\alpha_{k, k-1}} & Z_{k-1} \ar[d]_-{\beta_k|_{Z_{k-1}}} \ar@{^{(}->}[r] & B^k(L) \ar[u]_-{\pi_k} \ar[d]^-{\beta_k} \\
		& \Sigma_{k-1} \ar@{^{(}->}[r] & \Sigma_k.
	}
	\]
	Note that $\alpha_{k,k-1}^*\omega_{Z_{k-1}} = \omega_{B^{k-1}(L)}(Z_{k-2}) \boxtimes \omega_C$ and 
	there is a natural injection
	$$
	H^0(B^{k-1}(L), \omega_{B^{k-1}(L)}(Z_{k-2}+\ell H)) \hookrightarrow H^1(B^{k-1}(L) \times C, \omega_{B^{k-1}(L)}( Z_{k-2}+\ell H)) \boxtimes \omega_C).
	$$ 
	Then we obtain the following commutative diagram
	\[
	\xymatrix{
		H^1(S^{\ell}E_{k_1} \otimes T_{k+1}(\omega_C))\ar@{=}[d]  \ar[rr] & & H^1(S^{\ell}E_{k, L} \otimes T_k(\omega_C) \boxtimes \omega_C) \ar@{=}[d] \\
		H^1(\omega_{B^k(L)}(Z_{k-1}+\ell H )) \ar@{=}[d] \ar[r] & H^1(\omega_{Z_{k-1}}(\ell H)) \ar[r] \ar[d] & H^1(\omega_{B^{k-1}(L)}( Z_{k-2}+\ell H)) \boxtimes \omega_C) \\
		H^{2k}(I_{\Sigma_{k-1}|\Sigma_k}(-\ell))^* \ar[r]^-{\tau^{*}_{\ell}} & H^{2k-1}(\sO_{\Sigma_{k-1}}(-\ell))^* \ar@{=}[r] & H^0(\omega_{B^{k-1}(L)}(Z_{k-2}+\ell H)). \ar@{^{(}->}[u]
	}
	\]
	It is enough to check that the map on the top is injective. This is clear for $\ell=0$. For $\ell=1$, by (\ref{eq-cohacm3}) and Lemma \ref{K-vanishing}, we have the following injection
	$$
	H^1(E_{k+1} \otimes T_{k+1}(\omega_C)) \cong H^0(E_{k,L} \otimes T_k(\omega_C)) \hookrightarrow H^1(E_{k, L} \otimes T_k(\omega_C) \boxtimes \omega_C).
	$$
	Thus the map on the top for $\ell=1$ is injective as required.
	
Finally, recall the well known fact that a projective variety $X \subseteq \nP^r$ is arithmetically Cohen--Macaulay if and only if the following hold:
	\begin{enumerate}
		\item[(i)] $X \subseteq \nP^r$ is projectively normal.
		\item[(ii)] $H^i(X, \sO_X(\ell))=0$ for $0 < i < \dim X$ and $\ell \in \mathbb{Z}$. 
	\end{enumerate}
By Theorem \ref{normality} (3), (4) and the vanishing property (1) imply that $\Sigma_k \subseteq \nP^r$ is arithmetically Cohen--Macaulay. We complete the proof.
\end{proof}

\begin{corollary}\label{acmreg}
Let $k\geq 0$ be an integer, and $L$ be a line bundle on $C$. Assume that $$\deg L\geq 2g+2k+1.$$ 	Consider the secant variety $\Sigma_{k}=\Sigma_{k}(C,L)$ in the space $\nP(H^0(C, L))=\nP^r$. Then one has the following:
	\begin{enumerate}
		\item $\displaystyle
		h^0(\omega_{\Sigma_k})=\dim K_{r-2k-1, 2k+2}(\Sigma_k, \sO_{\Sigma_k}(1))={g+k \choose k+1}.
		$
		\item If $g=0$, then $\reg(\sO_{\Sigma_k})=k+1$ and $\reg(\Sigma_k)=k+2$.
		\item If $g \geq 1$, then $\reg(\sO_{\Sigma_k})=2k+2$ and $\reg(\Sigma_k)=2k+3$.
	\end{enumerate}
\end{corollary}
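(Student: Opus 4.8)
The plan is to read the corollary off the heavy machinery already in place: Theorems \ref{normality} and \ref{p:12} and Proposition \ref{sing}. The argument is then mostly a matter of organizing cohomology vanishing plus one duality.

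\emph{Step 0 (the vanishing pattern and the bound $\reg(\sO_{\Sigma_k})\le 2k+2$).} I would first record that $H^i(\Sigma_k,\sO_{\Sigma_k}(\ell))=0$ for all $i>0$ and $\ell>0$ by Theorem \ref{normality}(3), and $H^i(\Sigma_k,\sO_{\Sigma_k}(-\ell))=0$ for $1\le i\le 2k$ and $\ell\ge 0$ by Theorem \ref{p:12}(1). Combined, for every $m$ and every $1\le i\le 2k$ one has $H^i(\Sigma_k,\sO_{\Sigma_k}(m-i))=0$ (split according to the sign of $m-i$), so the only obstruction to $m$-regularity of $\sO_{\Sigma_k}$ is the vanishing of $H^{2k+1}(\Sigma_k,\sO_{\Sigma_k}(m-2k-1))$; since $\sO_{\Sigma_k}(m-2k-1)$ with $m-2k-1=1>0$ already kills this group when $m=2k+2$, $\sO_{\Sigma_k}$ is $(2k+2)$-regular, whence $\reg(\sO_{\Sigma_k})\le 2k+2$ always.

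\emph{Step 1 (part (1)).} Since $\Sigma_k\subseteq\nP^r$ is arithmetically Cohen--Macaulay (Theorem \ref{p:12}) it is Cohen--Macaulay of dimension $2k+1$ with a dualizing sheaf $\omega_{\Sigma_k}$, and Serre duality gives $h^0(\Sigma_k,\omega_{\Sigma_k})=h^{2k+1}(\Sigma_k,\sO_{\Sigma_k})=\dim S^{k+1}H^0(C,\omega_C)=\binom{g+k}{k+1}$ by Theorem \ref{p:12}(2). For the Koszul-cohomology identity I would use the standard duality for the graded canonical module of an ACM subvariety: writing $R=R(\Sigma_k,\sO_{\Sigma_k}(1))$, $S$ for the coordinate ring of $\nP^r$, and $c=r-2k-1$ for the codimension, the $S$-dual of the minimal free resolution of $R$ twisted by $S(-r-1)$ is the minimal free resolution of the canonical module $\omega_R=\bigoplus_m H^0(\Sigma_k,\omega_{\Sigma_k}(m))$, and under this correspondence the last Koszul strand $K_{c,2k+2}(\Sigma_k,\sO_{\Sigma_k}(1))$ is $S$-dual to the space of degree-$0$ minimal generators of $\omega_R$. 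Because $\reg(\sO_{\Sigma_k})\le 2k+2$ we have $K_{p,q}=0$ for $q\ge 2k+3$, so $\omega_R$ has no generators in negative degree; hence $(\omega_R)_{<0}=0$, the space $(\omega_R)_0=H^0(\Sigma_k,\omega_{\Sigma_k})$ consists entirely of minimal generators, and $\dim K_{r-2k-1,2k+2}(\Sigma_k,\sO_{\Sigma_k}(1))=h^0(\omega_{\Sigma_k})=\binom{g+k}{k+1}$.

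\emph{Step 2 (parts (2) and (3)).} By Step 0, $\reg(\sO_{\Sigma_k})$ is the smallest $m$ with $m-2k-1$ strictly above the largest twist $\ell$ for which $H^{2k+1}(\Sigma_k,\sO_{\Sigma_k}(\ell))\ne 0$, and $\reg(\Sigma_k)=\reg(\sO_{\Sigma_k})+1$ because $\Sigma_k\subseteq\nP^r$ is projectively normal (Theorem \ref{normality}(4)), hence $m$-normal for all $m$, via the relation between $\reg(\sO_X)$ and $\reg(X)$ recalled in Section \ref{sec:prelim1}. If $g\ge 1$: $H^{2k+1}(\Sigma_k,\sO_{\Sigma_k})=S^{k+1}H^0(C,\omega_C)^*\ne 0$ by Theorem \ref{p:12}(2) while $H^{2k+1}(\Sigma_k,\sO_{\Sigma_k}(\ell))=0$ for $\ell>0$, so the top twist is $\ell=0$, giving $\reg(\sO_{\Sigma_k})=2k+2$ and $\reg(\Sigma_k)=2k+3$. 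If $g=0$: $H^{2k+1}(\Sigma_k,\sO_{\Sigma_k})=0$ and $\Sigma_k$ has rational singularities (Remark after Proposition \ref{sing}), so $\omega_{\Sigma_k}=\beta_{k,*}\omega_{B^k(L)}$; using $\omega_{B^k(L)}=\sO_{B^k(L)}(-(k+1)H)\otimes\pi_k^*A_{k+1,\omega_C\otimes L}$ and Serre duality I would compute $H^{2k+1}(\Sigma_k,\sO_{\Sigma_k}(\ell))^*=H^0\big(C_{k+1},\pi_{k,*}\sO_{B^k(L)}(-(k+1+\ell)H)\otimes A_{k+1,\omega_C\otimes L}\big)$, which vanishes for $\ell\ge -k$ and, for $\ell=-(k+1)$, equals $H^0(C_{k+1},A_{k+1,\omega_C\otimes L})$; on $C_{k+1}=\nP^{k+1}$, where $T_{k+1}(\sO_{\nP^1}(1))=\sO_{\nP^{k+1}}(1)$ and $\delta_{k+1}=k\,H_{\nP^{k+1}}$, this is $H^0(\nP^{k+1},\sO_{\nP^{k+1}}(\deg L-2k-2))\ne 0$ once $\deg L\ge 2k+2$. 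So the top twist is $\ell=-(k+1)$, $\reg(\sO_{\Sigma_k})=k+1$ and $\reg(\Sigma_k)=k+2$ (the only degenerate case being $\deg L=2g+2k+1$ with $g=0$, where $\Sigma_k=\nP^r$).

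\emph{Main obstacle.} Essentially all the real work is absorbed into Theorems \ref{normality} and \ref{p:12}, so within this corollary the delicate point is the duality in Step 1 — matching the last Koszul strand with the degree-$0$ generators of $\omega_R$ and invoking $\reg(\sO_{\Sigma_k})\le 2k+2$ to rule out contributions from negative degrees — together with, in the $g=0$ case, the explicit identification of $\omega_{B^k(L)}$ and of the bundles $A_{k+1,\omega_C\otimes L}$ and $\delta_{k+1}$ on $C_{k+1}=\nP^{k+1}$.
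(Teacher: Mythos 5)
Your proposal is correct and follows essentially the same route as the paper: Serre duality plus the self-duality (up to twist by $-r-1$) of the minimal free resolution of an arithmetically Cohen--Macaulay coordinate ring for part (1), and reduction of the regularity computation to the single group $H^{2k+1}(\Sigma_k,\sO_{\Sigma_k}(\ell))$, handled via Theorem \ref{p:12}(2) when $g\geq 1$ and via rational singularities and Serre duality on $B^k(L)$ when $g=0$ --- you merely spell out the Koszul-strand bookkeeping and the computation of $\omega_{B^k(L)}$ on $C_{k+1}\cong\nP^{k+1}$ that the paper leaves as ``elementary.'' Your parenthetical about the degenerate case $g=0$, $\deg L=2k+1$ (where $\Sigma_k=\nP^r$ and $\reg(\sO_{\Sigma_k})=0$) is a legitimate caveat that the paper's statement of part (2) silently overlooks.
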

\begin{proof}
\noindent (1) As $\Sigma_k \subseteq \nP(H^0(C, L))=\nP^r$ is arithmetically Cohen--Macaulay by Theorem \ref{p:12}, dualizing the minimal graded free resolution of $R(\Sigma_k, \sO_{\Sigma_k}(1))$ and shifting by $-r-1$ gives the minimal graded free resolution of the canonical module.
	This implies that
	$$
	\dim K_{r-2k-1, 2k+2}(\Sigma_k, \sO_{\Sigma_k}(1)) = h^0(\Sigma_k, \omega_{\Sigma_k}).
	$$
	By the Serre duality and Theorem \ref{p:12}, we obtain
	$$
	h^0(\Sigma_k, \omega_{\Sigma_k})=h^{2k+1}(\Sigma_k, \sO_{\Sigma_k}) = \dim S^{k+1} H^0(C, \omega_C) = {g+k \choose k+1}.
	$$

\smallskip
	
\noindent	(2), (3)	By Theorem \ref{normality} (3), (4), we see that 
	$$
	\reg(\Sigma_k)=\reg(\sO_{\Sigma_k})+1 \leq 2k+3.
	$$
	By Theorem \ref{normality} (3) and Theorem \ref{p:12} (1), we know that $H^i(\Sigma_k, \sO_{\Sigma_k}(\ell))=0$ for $1 \leq i \leq 2k$ and $\ell \in \mathbb{Z}$. Thus we only have to consider the (non)vanishing of $H^{2k+1}(\Sigma_k, \sO_{\Sigma_k}(\ell))$.
	
	For (2), suppose that $g=0$. It is enough to show that $H^{2k+1}(\Sigma_k, \sO_{\Sigma_k}(-k))=0$ and $H^{2k+1}(\Sigma_k, \sO_{\Sigma_k}(-k-1))\neq 0$.
	By Proposition \ref{sing} (2), $\Sigma_k$ has log terminal singularities, and hence, it has rational singularities, i.e., $R^i \beta_{k,*}\sO_{B^k(L)}=0$ for $i > 0$. Then we obtain
	$$
	H^{2k+1}(\Sigma_k, \sO_{\Sigma_k}(\ell))=H^{2k+1}(B^k(L), \sO_{B^k(L)}(\ell))=H^0(B^k(L), \omega_{B^k(L)}(-\ell))^*.
	$$
	It is elementary to see that $H^0(B^k(L), \omega_{B^k(L)}(k))=0$ but $H^0(B^k(L), \omega_{B^k(L)}(k+1)) \neq 0$.
	
	For (3), suppose that $g \geq 1$. It is enough to prove that $H^{2k+1}(\Sigma_k, \sO_{\Sigma_k})\neq 0$. By Theorem \ref{p:12} (2), we find $H^{2k+1}(\Sigma_k, \sO_{\Sigma_k}) = S^{k+1}H^0(C, \omega_C) \neq 0$. We finish the proof.
\end{proof}

\subsection{Further properties of secant varieties}

We have shown the main theorems of the paper.
In this subsection, we discuss further properties of secant varieties of curves.

\begin{proposition}
Let $k \geq 0$ be an integer, and $L$ be a line bundle on $C$. Assume that
$$
\deg L \geq 2g+2k+1.
$$
Consider the $k$-th secant variety $\Sigma_k=\Sigma_k(C, L)$ in the space $\nP(H^0(C, L))=\nP^r$. Then one has the following:
	\begin{enumerate}
		\item The degree of $\Sigma_k \subseteq \nP^r$ is given by
		$$
		\deg \Sigma_k =\sum_{i=0}^{\min(k+1, g)} {\deg L -g-k-i \choose k+1-i} {g \choose i}.
		$$
		\item The multiplicity of $\Sigma_k$ at a point $x \in \Sigma_m \setminus \Sigma_{m-1}$ with $0 \leq m \leq k$ is given by
		$$
		\mult_x \Sigma_k = \deg \Sigma_{k-m-1}(C, L(-2\xi_{m+1,x}))=
		\sum_{i=0}^{\min(k-m,g)}{\deg L -g-m-1-k-i \choose k-m-i }{g \choose i} .
		$$
	\end{enumerate}
\end{proposition}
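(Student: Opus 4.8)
The plan is to reduce both statements to a single computation of the degree of a secant variety, and then to compute that degree via intersection theory on the secant bundle $B^k(L) = \nP(E_{k+1,L})$. For part (2), the key input is already in hand: by Proposition \ref{p:02}, for a point $x \in \Sigma_m \setminus \Sigma_{m-1}$, the conormal space $N^*_{\Sigma_m/\nP^r}\otimes \Bbbk(x) \cong H^0(C, L(-2\xi_{m+1,x}))$, which exhibits the projection from $x$ as realizing a neighborhood of $x$ in $\Sigma_k$ as (a cone over) the secant variety $\Sigma_{k-m-1}(C, L(-2\xi_{m+1,x}))$ in $\nP(H^0(C, L(-2\xi_{m+1,x})))$. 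More precisely, the general $(k+1)$-secant $k$-plane through $x$ is spanned by $\xi_{m+1,x}$ together with a moving divisor of degree $k-m$, and the linear projection with center the $m$-plane $\langle \xi_{m+1,x}\rangle$ sends $\Sigma_k$ birationally onto $\Sigma_{k-m-1}(C,L(-2\xi_{m+1,x}))$. Hence $\mult_x\Sigma_k$ equals the degree of this smaller secant variety. Since $\deg L(-2\xi_{m+1,x}) = \deg L - 2(m+1)$ and this is $\geq 2g + 2(k-m-1)+1$, part (1) applies to it, and substituting gives the stated binomial sum. So the whole problem collapses to proving the degree formula in (1).

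For part (1), I would compute $\deg\Sigma_k = \int_{B^k(L)} H^{2k+1}$, where $H$ is the tautological divisor with $\sO_{B^k(L)}(H) = \beta_k^*\sO_{\Sigma_k}(1)$; this is valid because $\beta_k \colon B^k(L) \to \Sigma_k$ is birational and $\Sigma_k$ has dimension $2k+1$. By the Grothendieck relation on $\nP(E_{k+1,L})$ over $C_{k+1}$, one has $H^{k+1} = \sum_{i=0}^{k+1} (-1)^{i+1}\, \pi_k^* c_i(E_{k+1,L}) \cdot H^{k+1-i}$ (with appropriate sign conventions), so that
\[
\int_{B^k(L)} H^{2k+1} = \int_{C_{k+1}} s_{k+1}(E_{k+1,L}),
\]
the top Segre class of the rank-$(k+1)$ bundle $E_{k+1,L}$ on the $(k+1)$-dimensional variety $C_{k+1}$. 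The remaining task is to evaluate $\int_{C_{k+1}} s_{k+1}(E_{k+1,L})$. For this I would use the standard generators $x$ (the class of $X_p$, i.e.\ $T_{k+1}(\sO_C(p))$) and $\theta$ (pulled back from the Jacobian) of the cohomology ring of $C_{k+1}$, together with the classical intersection numbers $\int_{C_{k+1}} x^{k+1-i}\theta^i = \binom{g}{i}$ for $0 \le i \le \min(k+1,g)$. The Chern classes of $E_{k+1,L}$ are computed from the defining exact sequence relating $E_{k+1,L}$ to the universal divisor, or equivalently from $N_{k+1,L} = \det E_{k+1,L} = T_{k+1}(L)(-\delta_{k+1})$ together with the Macdonald-type formulas for Chern classes of secant (tautological) bundles on symmetric products; in these terms $c(E_{k+1,L})$ is a polynomial in $x$ and $\theta$ whose coefficients depend only on $\deg L$ and $g$. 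Expanding $s_{k+1} = [c(E_{k+1,L})^{-1}]_{k+1}$ and pairing against the monomial intersection numbers yields precisely $\sum_{i=0}^{\min(k+1,g)}\binom{\deg L - g - k - i}{k+1-i}\binom{g}{i}$.

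The main obstacle I anticipate is the bookkeeping in the Segre-class computation: one must correctly identify the total Chern class of $E_{k+1,L}$ on $C_{k+1}$ in terms of $x$ and $\theta$ (this is where a sign or a binomial shift can easily go wrong), and then carry out the formal inversion to extract the top Segre class and match it term-by-term with the claimed sum. A clean way to organize this is to note that $s(E_{k+1,L})$ can be read off from the pushforward along $\pi_k$ of powers of $H$, and that $\int_{C_{k+1}} x^{k+1-i}\theta^i$ together with the fact that $\theta^{g+1} = 0$ truncates the sum at $i = \min(k+1,g)$; the factor $\binom{\deg L - g - k - i}{k+1-i}$ should emerge as the contribution of the "line bundle part" $T_{k+1}(L)$ after subtracting the diagonal correction $\delta_{k+1}$. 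Once the Chern class is pinned down, the rest is a routine but careful generating-function manipulation. An alternative to guard against errors is to verify the formula directly in low-genus or low-$k$ cases (e.g.\ $g=0$, where it should reduce to $\deg\Sigma_k = \binom{\deg L - k}{k+1}$, the known degree of the secant variety of a rational normal curve, and $g=1$, matching the elliptic normal curve computations cited in the paper) before presenting the general argument.
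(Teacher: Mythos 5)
Your reduction of (2) to (1) targets the right identity, but the mechanism you offer for it is wrong. Linear projection from the $m$-plane $\langle \xi_{m+1,x}\rangle$ is given by the subsystem $H^0(C,L(-\xi_{m+1,x}))$, not $H^0(C,L(-2\xi_{m+1,x}))$, and it carries $\Sigma_k$ generically onto $\Sigma_k(C,L(-\xi_{m+1,x}))$ --- a variety of the same dimension $2k+1$ --- not onto $\Sigma_{k-m-1}(C,L(-2\xi_{m+1,x}))$, which has dimension $2(k-m)-1$; no linear projection of $\Sigma_k$ itself produces the latter. Moreover, even for a birational projection the inference ``multiplicity at $x$ equals degree of the image'' is not valid: for projection from a single point one has $\deg \pi_x(X)=\deg X-\mult_x X$, and for a positive-dimensional center the correction involves the Segre class of $X$ along the entire center. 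What is actually needed is an identification of the projectivized tangent cone of $\Sigma_k$ at $x$ with a cone over $\Sigma_{k-m-1}(C,L(-2\xi_{m+1,x}))$; Proposition \ref{p:02}(2.b) alone does not give this (though the graded surjectivity established in the proof of Theorem \ref{normality}(1) could be used to extract it). The paper avoids the issue entirely by the birational invariance of Segre classes: $\beta_{k,*}s(F,B^k(L))=s(\{x\},\Sigma_k)$ for $F=\beta_k^{-1}(x)\cong C_{k-m}$, and since $F$ is regularly embedded with $N_{F/B^k(L)}\cong \sO_F^{\oplus 2m+1}\oplus E^*_{k-m,L(-2\xi_{m+1,x})}$ by Proposition \ref{p:02}(2.a),(2.d), one gets
$$
\mult_x\Sigma_k=\int_{C_{k-m}}s_{k-m}\big(E^*_{k-m,L(-2\xi_{m+1,x})}\big)=\deg\Sigma_{k-m-1}(C,L(-2\xi_{m+1,x})).
$$
That Segre-class step is exactly what your argument is missing.

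For (1), your reduction $\deg\Sigma_k=\int_{C_{k+1}}s_{k+1}(E_{k+1,L}^*)$ (the dual appearing or not according to your $\nP(E)$ convention) is the same as the paper's, which then simply cites Soul\'e \cite{Soule}. The remaining evaluation is the entire content of the statement, and you leave it as a plan with the hard part --- the total Chern class of $E_{k+1,L}$ in terms of $x$ and $\theta$ --- unspecified. One concrete slip already present: the intersection numbers on $C_{k+1}$ are $\int_{C_{k+1}}x^{k+1-i}\theta^i=g!/(g-i)!=i!\binom{g}{i}$, not $\binom{g}{i}$; the binomial coefficient in the final formula arises only after the $\theta^i/i!$ factors from the Chern classes are taken into account. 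Your low-genus sanity checks are sensible but do not substitute for carrying out (or citing) the computation.
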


\begin{proof}
(1) follows from \cite[Proposition 1]{Soule}. In fact, $\deg \Sigma_k$ is the Segre class $s_{k+1}(E_{k+1,L}^*)$.
For (2), notice that $\mult_x \Sigma_k$ is the Segre class $s_0(\{ x \}, \Sigma_k)$, which is invariant under a birational morphism. Recall that $F:=\beta_k^{-1}(x) \cong C_{k-m}$ and $N_{F/B^k(L)} \cong \sO_{F}^{\oplus 2m+1} \oplus E_{k-m,L(-2\xi_{m+1,x})}^*$ (Proposition \ref{p:02} (2.a, 2.d)). 
Thus we have
$$
\mult_x \Sigma_k = s_{k-m}(F, B^k(L))=s_{k-m}(N_{F/B^k(L)})=s_{k-m}(E_{k-m,L(-2\xi_{m+1,x})}^*).
$$
Consider the secant variety $\Sigma_{k-m-1}(C, L(-2\xi_{m+1,x}))$ in the space $\nP(H^0(C, L(-2\xi_{m+1,x})))$. Then we obtain
$$
s_{k-m}(E_{k-m,L(-2\xi_{m+1,x})}^*) = \deg \Sigma_{k-m-1}(C, L(-2\xi_{m+1,x})),
$$
which completes the proof by (1) since $\deg L(-2 \xi_{m+1,x}) \geq 2g+2(k-m-1)+1$.
\end{proof}

Next, we show that $B^k(L)$ is the normalization of the blowup of $\Sigma_{k}$ along $\Sigma_{k-1}$. For this purpose, we prove the following lemma.

\begin{lemma}\label{lem:Agg}
For any integer $k\geq 0$, one has the following:
	\begin{enumerate}
		\item $A_{k+1,L}$ is globally generated if $\deg L\geq 2g+2k$.
		\item $A_{k+1,L}$ is globally generated and ample if $\deg L\geq 2g+2k+1$.
	\end{enumerate}
\end{lemma}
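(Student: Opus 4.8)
The plan is to deduce (2) from (1), prove (1) by induction on $k$ using Lemma \ref{A-restriction}, and reduce the inductive step to a cohomology vanishing on $C_{k+1}$ that is then transferred to the Cartesian product $C^{k+1}$. For (2), fix any $p\in C$; since $\sO_{C_{k+1}}(X_p)=T_{k+1}(\sO_C(p))$ and $T_{k+1}$ is multiplicative in the line bundle, we have $A_{k+1,L}=A_{k+1,L(-p)}\otimes\sO_{C_{k+1}}(X_p)$. When $\deg L\geq 2g+2k+1$ one has $\deg L(-p)\geq 2g+2k$, so $A_{k+1,L(-p)}$ is globally generated --- hence nef --- by part (1), while $\sO_{C_{k+1}}(X_p)$ is ample; a nef line bundle tensored with an ample one is ample, so $A_{k+1,L}$ is ample, and it is globally generated by part (1) as well.

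For (1) I argue by induction on $k$. The case $k=0$ is clear since $A_{1,L}=L$ and $\deg L\geq 2g$ forces $L$ to be globally generated. Assuming (1) for $k-1$ and all line bundles, let $\deg L\geq 2g+2k$, fix $\eta\in C_{k+1}$, choose a point $p$ in its support, and view $\eta$ as a point $\eta'$ of $X_p\cong C_k$ under $\xi\mapsto\xi+p$. By Lemma \ref{A-restriction}, $A_{k+1,L}|_{X_p}=A_{k,L(-2p)}$, and since $\deg L(-2p)\geq 2g+2(k-1)$ this is globally generated at $\eta'$ by the inductive hypothesis. Tensoring $0\to\sO_{C_{k+1}}(-X_p)\to\sO_{C_{k+1}}\to\sO_{X_p}\to 0$ by $A_{k+1,L}$ yields
\[
0\longrightarrow A_{k+1,L(-p)}\longrightarrow A_{k+1,L}\longrightarrow A_{k,L(-2p)}\longrightarrow 0 ,
\]
so once $H^1(C_{k+1},A_{k+1,L(-p)})=0$ is known, the restriction $H^0(C_{k+1},A_{k+1,L})\to H^0(X_p,A_{k,L(-2p)})$ is surjective and a section nonvanishing at $\eta'$ lifts to one nonvanishing at $\eta$. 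Thus (1) follows from the statement: \emph{if $\deg M\geq 2g+2k-1$ then $H^i(C_{k+1},A_{k+1,M})=0$ for all $i\geq 1$} (applied with $M=L(-p)$).

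To prove this vanishing, recall that $\omega_{C_{k+1}}=T_{k+1}(\omega_C)(-\delta_{k+1})$, so by Serre duality $H^i(C_{k+1},A_{k+1,M})^{\vee}\cong H^{k+1-i}(C_{k+1},T_{k+1}(\mathcal N)(\delta_{k+1}))$, where $\mathcal N:=\omega_C\otimes M^{-1}$ has $\deg\mathcal N\leq-(2k+1)$; hence it suffices to show $H^j(C_{k+1},T_{k+1}(\mathcal N)(\delta_{k+1}))=0$ for $0\leq j\leq k$. Pulling back along the finite flat quotient $q_{k+1}\colon C^{k+1}\to C_{k+1}$, which in characteristic zero splits $\sO_{C_{k+1}}$ off $q_{k+1,*}\sO_{C^{k+1}}$ (cf. Lemma \ref{diagonal}), and using $q_{k+1}^*T_{k+1}(\mathcal N)=\mathcal N^{\boxtimes k+1}$ and $q_{k+1}^*\sO(\delta_{k+1})=\sO(\Delta_{k+1})$, one is reduced to proving $H^j(C^{k+1},\mathcal N^{\boxtimes k+1}(\Delta_{k+1}))=0$ for $0\leq j\leq k$ whenever $\deg\mathcal N\leq-(2k+1)$. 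The case $j=0$ is immediate: the restriction of $\mathcal N^{\boxtimes k+1}(\Delta_{k+1})$ to a coordinate curve through a point with pairwise distinct coordinates has degree $\deg\mathcal N+k<0$. For $j\geq 1$ the plan is to build up $\Delta_{k+1}=\sum_{u<v}\Delta_{u,v}$ one pairwise diagonal at a time: for a partial sum $D$ and a new diagonal $\Delta_{u,v}$ the sequence
\[
0\to\mathcal N^{\boxtimes k+1}(D)\to\mathcal N^{\boxtimes k+1}(D+\Delta_{u,v})\to\big(\mathcal N^{\boxtimes k+1}(D+\Delta_{u,v})\big)\big|_{\Delta_{u,v}}\to 0 ,
\]
combined with $\Delta_{u,v}\cong C^k$, adjunction ($\sO(\Delta_{u,v})|_{\Delta_{u,v}}=\omega_C^{-1}$), and Kunneth (which disposes of the starting sheaf $\mathcal N^{\boxtimes k+1}$, all of whose cohomology in degrees $\leq k$ vanishes because $H^0(C,\mathcal N)=0$), reduces the vanishing to a suitably generalized version of the same statement on $C^k$ --- with pairwise diagonals allowed to occur with multiplicity up to two --- and, at the base of the induction, to the surjectivity of multiplication-type maps on $C$. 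For $k=1$ this input is precisely the surjectivity of $H^0(C,A)\otimes H^0(C,A)\to H^0(C,A^2)$ for $A:=\omega_C\otimes\mathcal N^{-1}$, which has $\deg A=\deg M\geq 2g+1$ and so holds by Green's theorem.

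Everything other than the vanishing $H^j(C^{k+1},\mathcal N^{\boxtimes k+1}(\Delta_{k+1}))=0$ for $j\leq k$ is formal manipulation with the bundles $T_{k+1}(\cdot)$, $\delta_{k+1}$ and $X_p$ already set up in Section \ref{sec:prelim2}, so that vanishing is where the real work lies. The delicate points are choosing a good order in which to add the $\binom{k+1}{2}$ pairwise diagonals, bookkeeping which diagonals and self-intersections (and with what multiplicities) survive on the lower-dimensional strata $\Delta_{u,v}\cong C^k$, and --- most of all --- controlling the top cohomology $H^k$ of these $k$-dimensional strata, which must be traced back to a Koszul-cohomology vanishing on $C$ covered by Green's $(2g+1+p)$-theorem in the appropriate degree range. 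I expect this to be the main obstacle and the longest part of the argument.
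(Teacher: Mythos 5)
Your overall architecture coincides with the paper's: part (2) is deduced from part (1) exactly as in the paper ($A_{k+1,L}=A_{k+1,L(-p)}\otimes T_{k+1}(\sO_C(p))$, nef plus ample), and part (1) is proved by induction on $k$ via the sequence $0\to A_{k+1,L(-p)}\to A_{k+1,L}\to A_{k,L(-2p)}\to 0$ on $X_p\cong C_k$, reducing everything to the vanishing $H^1(C_{k+1},A_{k+1,M})=0$ for $\deg M\geq 2g+2k-1$. Up to that point your argument is correct and is essentially the paper's.

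The problem is that you do not actually prove this vanishing, and the route you sketch for it is where the proposal breaks down. You pass to Serre duality and set yourself the task of showing $H^j\big(C^{k+1},\mathcal{N}^{\boxtimes k+1}(\Delta_{k+1})\big)=0$ for $0\leq j\leq k$ with $\deg\mathcal{N}\leq -(2k+1)$ (moreover for all $j\le k$, though only the single group dual to $H^1$ is needed), and then only outline an induction that peels off the $\binom{k+1}{2}$ pairwise diagonals one at a time. You yourself flag that controlling the top cohomology of the strata $\Delta_{u,v}\cong C^k$ --- with self-intersection terms $\omega_C^{-1}$ and doubled diagonals accumulating --- is ``the main obstacle and the longest part of the argument.'' As written this is a plan, not a proof: the delicate bookkeeping and the Koszul-type input on the strata are exactly the content of the lemma, and they are left open. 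The paper avoids all of this: since $A_{k+1,M}=N_{k+1,M}(-\delta_{k+1})$ and Lemma \ref{diagonal} exhibits $\sO_{C_{k+1}}(-\delta_{k+1})$ as a direct summand of $q_{k+1,*}\sO_{C^{k+1}}$ (note it is this twisted splitting, not the splitting of $\sO_{C_{k+1}}$ itself, that converts the $-2\delta_{k+1}$ into $-\Delta_{k+1}$ upstairs), the group $H^1(C_{k+1},A_{k+1,M})$ is a direct summand of $H^1\big(C^{k+1},M^{\boxtimes k+1}(-\Delta_{k+1})\big)$; pushing forward along the projection $C^{k+1}\to C^k$ identifies this with $H^1\big(C^k,Q_{k,M}\otimes M^{\boxtimes k}(-\Delta_k)\big)$, which vanishes by the already-established Theorem \ref{vanishing} (the case $i=j=1$, needing exactly $\deg M\geq 2g+2(k-1)+1-1+1=2g+2k-1$). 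In short, the vanishing you need is Serre-dual to a special case of a theorem the paper has already proved in Section \ref{sec:vanishing}; invoking it closes the gap immediately, whereas your proposed from-scratch induction on positive multiples of the diagonal is both harder than necessary and not carried out.
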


\begin{proof} 
For a point $p\in C$, consider the short exact sequence 
	$$0\longrightarrow A_{k+1,L}(-X_p)\longrightarrow A_{k+1,L}\longrightarrow A_{k+1,L}|_{X_p}\longrightarrow 0.$$
Note that $A_{k+1,L}|_{X_p}=A_{k,L(-2p)}$ and $A_{k+1,L}(-X_p)=A_{k+1,L(-p)}$. 
By induction on $k$, we only need to show $H^1(C_{k+1},A_{k+1,L(-p)})=0$. Pulling back the involved line bundle to $C^{k+1}$ and applying Lemma  \ref{diagonal}, we can reduce the problem to prove the following cohomology vanishing
	\begin{equation}\label{eq:cohvanAgg}
	H^1(C^{k+1}, L^{\boxtimes k+1}(-\Delta_{k+1})) = 0~~ \text{ if }\deg L\geq 2g + 2k-1.
	\end{equation}
If $k=0$, then (\ref{eq:cohvanAgg}) is clear. Assume $k \geq 1$. Then $L$ separates $k$ points.
Let $p \colon C^{k+1}\rightarrow C^k$ be the projection to the first $k$ components. Then
	$$p_*L^{\boxtimes k+1}(-\Delta_{k+1}) = Q_{k,L}\otimes L^{\boxtimes k}(-\Delta_k)$$
	so that $H^1(C^{k+1}, L^{\boxtimes k+1}(-\Delta_{k+1})) = H^1(C^k,Q_{k,L}\otimes L^{\boxtimes k}(-\Delta_k))$. As $\deg L\geq 2g+2k-1 = 2g+2(k-1)+1$, the desired cohomology vanishing (\ref{eq:cohvanAgg}) follows from Theorem \ref{vanishing}, proving (1). For (2), notice that $A_{k+1,L}=A_{k+1,L(-p)} \otimes T_{k+1}(\sO_C(p))$. By (1), $A_{k+1,L(-p)} $ is globally generated, and we know that $T_{k+1}(\sO_C(p))$ is ample. Hence (2) follows.
\end{proof}

\begin{proposition}
Let $k\geq 0$ be an integer, and $L$ be a line bundle on $C$. Assume that
$$\deg L\geq 2g+2k+1.$$
Consider the $k$-th secant variety $\Sigma_{k}=\Sigma_{k}(C,L)$ in the space $\nP(H^0(C, L))=\nP^r$. Then one has the following: 
	\begin{enumerate}
		\item $\beta_k \colon B^k(L)\rightarrow \Sigma_k$ factors through the blowup $\Bl_{\Sigma_{k-1}}\Sigma_k$ of $\Sigma_k$ along $\Sigma_{k-1}$.	
		\item $B^k(L)$ is the normalization of $\Bl_{\Sigma_{k-1}}\Sigma_k$.
		\item  $\beta_{k,*}\sO_{B^k(L)}(-mZ_{k-1}) = \overline{I^m_{\Sigma_{k-1}| \Sigma_k}}$ for $m\geq 0$, where $\overline{\mf{a}}$ denotes the integral closure of an ideal sheaf $\mf{a}$.
	\end{enumerate}
\end{proposition}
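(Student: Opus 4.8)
The plan is to derive all three statements from the single identity $I_{\Sigma_{k-1}|\Sigma_k}\cdot\sO_{B^k(L)}=\sO_{B^k(L)}(-Z_{k-1})$ of ideal sheaves on $B^k(L)$; throughout I assume $k\geq 1$, the case $k=0$ being trivial since then $\Sigma_{-1}=Z_{-1}=\emptyset$ and $\beta_0=\id_C$. Since $\deg L\geq 2g+2k+1$, Theorem \ref{normality}(1) gives that $\Sigma_k$ is normal, so by Zariski's main theorem $\beta_{k,*}\sO_{B^k(L)}=\sO_{\Sigma_k}$ and $\beta_{k,*}\sO_{B^k(L)}(-Z_{k-1})=I_{\Sigma_{k-1}|\Sigma_k}$. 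As $\beta_k^{-1}(\Sigma_{k-1})=Z_{k-1}$ set-theoretically and $Z_{k-1}$ is a reduced prime divisor, the natural map $\beta_k^{*}I_{\Sigma_{k-1}|\Sigma_k}\to\sO_{B^k(L)}$ factors through $\sO_{B^k(L)}(-Z_{k-1})\hookrightarrow\sO_{B^k(L)}$; write $\phi\colon\beta_k^{*}I_{\Sigma_{k-1}|\Sigma_k}\to\sO_{B^k(L)}(-Z_{k-1})$ for the resulting map, whose image is $I_{\Sigma_{k-1}|\Sigma_k}\cdot\sO_{B^k(L)}$. The key point is that $\phi$ is surjective. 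To see this I would twist by $\sO_{B^k(L)}((k+1)H)$, where $H$ is the tautological divisor: by Proposition \ref{p:01}(2) the target becomes $\sO_{B^k(L)}((k+1)H-Z_{k-1})=\pi_k^{*}A_{k+1,L}$, and since $A_{k+1,L}$ is globally generated (Lemma \ref{lem:Agg}(1)) and $\pi_{k,*}\sO_{B^k(L)}=\sO_{C_{k+1}}$, the bundle $\pi_k^{*}A_{k+1,L}$ is globally generated. Moreover, the projection formula and $\beta_{k,*}\sO_{B^k(L)}(-Z_{k-1})=I_{\Sigma_{k-1}|\Sigma_k}$ give $H^0(B^k(L),\pi_k^{*}A_{k+1,L})=H^0(\Sigma_k,I_{\Sigma_{k-1}|\Sigma_k}(k+1))$, and under this identification the evaluation map of the globally generated bundle $\pi_k^{*}A_{k+1,L}$ factors through $\phi\otimes\sO_{B^k(L)}((k+1)H)$, which is therefore surjective; hence so is $\phi$. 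This establishes $I_{\Sigma_{k-1}|\Sigma_k}\cdot\sO_{B^k(L)}=\sO_{B^k(L)}(-Z_{k-1})$, an invertible sheaf, so by the universal property of blowing up $\beta_k$ factors as $B^k(L)\xrightarrow{\,g\,}W:=\Bl_{\Sigma_{k-1}}\Sigma_k\xrightarrow{\,h\,}\Sigma_k$ with $g^{*}\sO_W(-E_W)=\sO_{B^k(L)}(-Z_{k-1})$, $E_W$ the exceptional divisor of $h$. This is (1).

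For (2), the plan is to show the birational morphism $g$ is finite; then, since $B^k(L)$ is normal, $g$ is canonically the normalization of $W$ (its direct image sheaf is a finite, birational, hence integrally closed, extension of $\sO_W$ inside the function field, so it is the integral closure of $\sO_W$). As $g$ is proper it suffices to bound its fibres, and since $h\circ g=\beta_k$, the fibre of $g$ over a point of $h^{-1}(x)$ lies in $\beta_k^{-1}(x)$. For $x\in\Sigma_k\setminus\Sigma_{k-1}$, $\beta_k^{-1}(x)$ is a point; for $x\in\Sigma_m\setminus\Sigma_{m-1}$ with $m\leq k-1$, $g$ restricts to a morphism $\beta_k^{-1}(x)\cong C_{k-m}\to h^{-1}(x)$ pulling the relatively ample bundle $\sO_W(-E_W)|_{h^{-1}(x)}$ back to $\sO_{B^k(L)}(-Z_{k-1})|_{\beta_k^{-1}(x)}$. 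By the computation carried out in the proof of Theorem \ref{normality}(2) (via Lemma \ref{A-restriction} and Proposition \ref{p:02}(2.a)), the latter equals $A_{k-m,\,L(-2\xi_{m+1,x})}$, and since $\deg L(-2\xi_{m+1,x})=\deg L-2(m+1)\geq 2g+2(k-m-1)+1$, Lemma \ref{lem:Agg}(2) shows it is ample on $C_{k-m}$. A morphism of projective varieties that pulls an ample line bundle back to an ample one contracts no curve, hence has finite fibres; so $g$ is quasi-finite, therefore finite, proving (2). In particular $\sO_{B^k(L)}(-Z_{k-1})=g^{*}\sO_W(-E_W)$ is $\beta_k$-ample, being the pullback along the finite morphism $g$ of a line bundle ample relative to $h$.

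Statement (3) then follows formally. For a normal variety and an ideal sheaf $\mathfrak{a}$, the integral closure $\overline{\mathfrak{a}^{m}}$ is computed on the normalized blowup of $\mathfrak{a}$ as the pushforward of $\sO(-mE)$, where $E$ is the exceptional Cartier divisor and $\sO(-E)$ is relatively ample (a standard fact about integral closures of ideals and normalized blowups). By (1) and (2), $\beta_k\colon B^k(L)\to\Sigma_k$ is precisely the normalized blowup of $I_{\Sigma_{k-1}|\Sigma_k}$, with $E=Z_{k-1}$ and $\sO_{B^k(L)}(-Z_{k-1})$ being $\beta_k$-ample; hence $\beta_{k,*}\sO_{B^k(L)}(-mZ_{k-1})=\overline{I^{m}_{\Sigma_{k-1}|\Sigma_k}}$ for all $m\geq 1$, while for $m=0$ the identity reduces to $\beta_{k,*}\sO_{B^k(L)}=\sO_{\Sigma_k}$, which was already recorded.

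I expect the crux to be part (1): upgrading the a priori inclusion $I_{\Sigma_{k-1}|\Sigma_k}\cdot\sO_{B^k(L)}\subseteq\sO_{B^k(L)}(-Z_{k-1})$ to an equality, equivalently showing that $\beta_k$ factors through the blowup at all. This is exactly where the global generation of $A_{k+1,L}$ (Lemma \ref{lem:Agg}) together with the relative description $\sO_{B^k(L)}((k+1)H-Z_{k-1})=\pi_k^{*}A_{k+1,L}$ (Proposition \ref{p:01}(2)) is indispensable. Once (1) is in place, (2) rests only on the ampleness on fibres supplied by Lemma \ref{lem:Agg}(2), and (3) is a routine consequence of the valuative description of integral closure through the normalized blowup.
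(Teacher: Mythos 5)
Your proposal is correct and follows essentially the same route as the paper: part (1) via the identity $I_{\Sigma_{k-1}|\Sigma_k}\cdot\sO_{B^k(L)}=\sO_{B^k(L)}(-Z_{k-1})$, obtained by twisting by $(k+1)H$ and using $\sO_{B^k(L)}((k+1)H-Z_{k-1})=\pi_k^*A_{k+1,L}$ together with the global generation of $A_{k+1,L}$; part (2) by checking finiteness of the induced map to the blowup fiberwise via the ampleness of $A_{k-m,L(-2\xi_{m+1,x})}$ on $C_{k-m}$; and part (3) as the standard consequence for normalized blowups. No gaps.
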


\begin{proof}
Recall the projection $\pi_k \colon B^k(L)\rightarrow C_{k+1}$. We write $\sO_{B^k(L)}(H)$ to be the tautological bundle of $B^k(L)$, which also equals to $\beta^*_k\sO_{\nP^r}(1)$. For simplicity, we set $I:=I_{\Sigma_k|\Sigma_{k-1}}$ and $Y:=\Bl_{\Sigma_{k-1}}\Sigma_k$.
	
\smallskip	
	
\noindent 	(1) It is enough to show that the natural morphism $\beta^*_k I \rightarrow \sO_{B^k(L)}(-Z_{k-1})$ is surjective. Thus we only have to show $I\cdot\sO_{B^k(L)}=\sO_{B^k(L)}(-Z_{k-1})$. As we have seen in Proposition \ref{p:01} (2) that  $\sO_{B^k(L)}((k+1)H-Z_{k-1})=\pi^*_kA_{k+1,L}$, we can form the following commutative diagram 
	\[
	\xymatrix{
	H^0(I(k+1)) \ar@{=}[r]  \ar[d] &  H^0(\sO_{B^k(L)}((k+1)H-Z_{k-1})) \ar[d]\\
	I\cdot \sO_{B^k(L)}((k+1)H) \ar[r] & \pi^*_kA_{k+1,L}~.
	}
	\]
But $A_{k+1,L}$ is globally generated by Lemma \ref{lem:Agg}. Therefore $I\cdot \sO_{B^k(L)}((k+1)H)=\pi^*A_{k+1,L}$, which implies $I\cdot \sO_{B^k(L)}=\sO_{B^k(L)}(-Z_{k-1})$ as desired. 

\smallskip

\noindent (2) We have the following factorization 
$$
\xymatrix{
	                   &Y=\Bl_{\Sigma_{k-1}}\Sigma_k \ar[d]^-\varphi\\
	B^k(L) \ar[r]_-{\beta_k} \ar[ru]^-{\alpha_k} &\Sigma_{k}.
	}
	$$
Let $E$ be the exceptional divisor on $Y$. As $I(k+1)$ is globally generated, $\varphi^*\sO_{ \Sigma_{k}}(k+1)(-E)$ is globally generated, and $\varphi^*\sO_{ \Sigma_{k}}(k+2)(-E)$ is very ample. For any point $x\in \Sigma_m \setminus \Sigma_{m-1}$, the fiber $\beta^{-1}_k(x) \cong C_{k-m}$ (Proposition \ref{p:02} (2.a)). 
Let  $\alpha_{k,x} \colon \beta^{-1}_k(x)\rightarrow \varphi^{-1}(x)$ be the induced morphism on fibers. We see that 
$$\alpha^*_{k,x}(\varphi^*\sO_{ \Sigma_{k}}(k+2)(-E))\cong A_{k+1,L}|_{C_{k-m}}\cong A_{k-m-1},L(-2\xi_{m+1,x}),$$ 
where $\xi_{m+1,x}$ is the unique degree $m+1$ divisor on $C$ determined by $x$. But the last line bundle is ample by Lemma \ref{lem:Agg}. So $\alpha_{k,x}$ is finite, and therefore, $\alpha_{k}$ is finite. Hence $B^k(L)$ is the normalization of $Y$.

\smallskip

\noindent (3) This is a direct consequence of (2).
\end{proof}

Finally, we construct secant varieties of curves which are neither normal nor Cohen--Macaulay when $\deg L=2g+2k < 2g+2k+1$. This shows that the degree bounds on embedding line bundle in Theorem \ref{main1:singularities} and Theorem \ref{main2:syzygies} are optimal. 

\begin{example}\label{Ex:non-normal}
Let $k\geq 1$ be an integer, and $C$ be a nonsingular projective curve of genus $g\geq 2k+2$. 		
		Take an effective divisor $D$ consisting of $2k+2$ general points of $C$ such that 
		$h^0(C,\sO_C(D))=1$.	
		Consider a very ample line bundle 
		$$
		L=\omega_C(D) \text{ with }\deg L=2g+2k.
		$$
		Observe that $L$ separates $2k+1$ points, and $L$ separates  $2k+2$ points except of $D$. We show that the $k$-th secant variety 
		$$
		\Sigma_k=\Sigma_k(C,L)\subseteq \nP(H^0(C, L))=\nP^{g+2k}
		$$
is neither normal nor Cohen--Macaulay.

		For any effective divisor $\xi$ on $C$, we denote by $\Lambda_{\xi}$ the linear space spanned by $\xi$ in the space $\nP^{g+2k}$. Let $D_1$ and $D_2$ be two effective divisors of degree $k+1$ such that $D_1+D_2=D$. 
		By Riemann-Roch, $h^0(C, L(-D_1-D_2))=g$.
		Thus $D_1+D_2$ span a linear space $\Lambda_{D_1+D_2}$ of dimension $2k$. This means that $\Lambda_{D_1}$ and $\Lambda_{D_2}$ span $\Lambda_{D_1+D_2}$ and intersect at a single point $q\in \Sigma_k \setminus C$. Let $Z$ be an effective divisor of degree $k+1$, and suppose $D_1+Z\neq D$. Then $L$ separates $D_1+Z$, and therefore, the space $\Lambda_{D_1+Z}$ has dimension $2k+1$. Hence $\Lambda_{D_1}\cap \Lambda_Z=\emptyset$. This implies that $q\in \Sigma_k \setminus \Sigma_{k-1}$ and except of $\Lambda_{D_1}$ and $\Lambda_{D_2}$, there is no any other $(k+1)$-secant $k$-plane of $C$ passing through $q$. 
For any two degree $k+1$ effective divisors $D_1'$ and $D_2'$ such that $D_1'+D_2'=D$, the $k$-secant planes $\Lambda_{D_1'}$ and $\Lambda_{D_2'}$ intersect at a single point in $\Sigma_k \setminus \Sigma_{k-1}$. Let $Q$ be the set of all such intersection points. Then $Q$ contains only finitely many points. 
		
	Consider the morphism  $\beta_k \colon B^k(L)\rightarrow \Sigma_k$. Let $x\in \Sigma_k \setminus \Sigma_{k-1}$. If $x\in Q$, then the fiber $\beta^{-1}_k(x)$ contains two points. If $x\notin Q$, then the fiber  $\beta^{-1}_k(x)$ contains only one point $y$. In this case, we can show that the induced morphism 
	$\beta^\#_k \colon T_x^*\nP^r\longrightarrow \mf{m}_{B^k(L),y}/\mf{m}^2_{B^k(L),y}$
on cotangent spaces is surjective. Therefore $\beta_k$ is unramified at $y$, so it is isomorphic over $x$. In conclusion, $\beta_k$ is an isomorphism over $\Sigma_k \setminus (\Sigma_{k-1} \cup Q)$.  Then we have the short exact sequence 
$$0\longrightarrow \sO_{ \Sigma_{k}}\longrightarrow \beta_{k,*}\sO_{B^k(L)}\longrightarrow \sQ\longrightarrow 0,$$
where the support of the quotient sheaf $\sQ$ has zero-dimensional components supported on $Q$. This means that $\Sigma_k$ is not normal at any point in $Q$. Moreover, $H^1(\Sigma_k, \sO_{\Sigma_k}(-\ell)) \neq 0$ for all $\ell \geq 0$, so $\Sigma_k$ is not Cohen--Macaulay.	
\end{example}

\section{Open problems}\label{sec:problem}

\noindent To conclude this paper, we present a number of open problems. We keep using notations introduced before; thus $C$ is a nonsingular projective curve of genus $g$ embedded by a very ample line bundle $L$ in the space $\nP(H^0(C,L))=\nP^r$. 

One of critical steps in the proof of the main results is to establish the Du Bois type condition (\ref{DB_cond}). We have shown that $B^k(L)$ is the normalization of the blowup of $\Sigma_k$ along $\Sigma_{k-1}$. For better understanding of the geometry of $B^k(L)$, one observes that if $k=1$, then the variety $B^1(L)$ is indeed the blowup of $\Sigma_1$ along the curve $C$. This leads us to ask the following:

\begin{problem} Can the secant bundle $B^k(L)$ be  realized as the blowup of $\Sigma_{k}$ along $\Sigma_{k-1}$?
\end{problem}

\medskip

The Danila's theorem (Theorem \ref{danila}) handles the initial steps of projectively normality of secant varieties. It gives precise values of global sections of the symmetric products of the secant bundle $E_{k+1,L}$. On the other hand, the techniques used in Section \ref{sec:vanishing} may offer an alternative approach to compute cohomology groups of the symmetric products of $E_{k+1,L}$. As an independent question, we wonder if one can deal with the following:

\begin{problem}
Compute cohomology groups of the symmetric products of the secant bundle $E_{k+1,L}$ on $C_{k+1}$.
\end{problem}

\medskip

If we view the classic theorem of Ein--Lazarsfeld \cite{Ein:SyzygyKoszul} as a higher dimensional generalization of Green's result in \cite{G:Kosz}, then we may ask a similar generalization of the results of the present paper to higher dimensional varieties. For a nonsingular projective variety $X$, consider the adjoint line bundle $L=K_X+dA$ where $A$ is an ample line bundle and $d$ is a natural number.  For $d$ sufficiently large, $L$ embeds $X$ into a projective space. We expect that in this case the secant varieties of $X$ would have nice geometric and algebraic properties. 

\begin{problem} 
Extend the results of present paper to secant varieties of a nonsingular projective variety $X$ embedded in a projective space by a sufficiently positive line bundle.
\end{problem}

\noindent This problem has two major essential difficulties. First of all, there is no a  good construction involving secant bundles as the one in Betram's work \cite{Bertram:ModuliRk2}. Secondly, the projectively normality of $X$ embedded by the adjoint line bundle is still unsolved. One may further impose the condition that $A$ is very ample so \cite{Ein:SyzygyKoszul} can be applied or may follow the idea in \cite{EL:AsySyz} to study the asymptotic behavior of secant varieties.  However, the surface case seems a reasonable starting point toward the arbitrary dimensional case.  

\begin{problem} Study secant varieties of a surface $X$ embedded by the ajoint line bundle $K_X+dA$ where $A$ is ample and $d$ is a large integer.
\end{problem}

\bibliographystyle{abbrv}


\begin{thebibliography}{1}



\bibitem{Bertram:ModuliRk2}
Aaron Bertram.
\newblock Moduli of rank-{$2$} vector bundles, theta divisors, and the geometry
of curves in projective space.
\newblock {\em J. Differential Geom.}, 35(2): 429--469, 1992.

\bibitem{Chou:SingSecVar}
Chih-Chi Chou and Lei Song.
\newblock Singularities of secant varieties.
\newblock {\em Int. Math. Res. Not. IMRN}, (9): 2844--2865, 2018.

\bibitem{Danila}
Gentiana Danila.
\newblock Sections de la puissance tensorielle du fibr\`{e} tautologique sur le sch\'{e}ma de Hilbert des points d'une surface. 
\newblock {\em Bull. Lond. Math. Soc.}, 39(2): 311--316, 2007. 

\bibitem{Ein:SyzygyKoszul}
Lawrence Ein and Robert Lazarsfeld.
\newblock Syzygies and {K}oszul cohomology of smooth projective varieties of
arbitrary dimension.
\newblock {\em Invent. Math.}, 111(1):51--67, 1993.

\bibitem{EL:AsySyz}
Lawrence Ein and Robert Lazarsfeld.
\newblock Asymptotic syzygies of algebraic varieties.
\newblock {\em Invent. Math.},190: 603--646, 2012.

\bibitem{Ein:Gonality}
Lawrence Ein and Robert Lazarsfeld.
\newblock The gonality conjecture on syzygies of algebraic curves of large
degree.
\newblock {\em Publ. Math. Inst. Hautes \'Etudes Sci.}, 122: 301--313, 2015.

\bibitem{EGHP}
David Eisenbud, Mark Green, Klaus Hulek, and Sorin Popescu.
\newblock Restricting linear syzygies: algebra and geometry.
\newblock {\em Compositio Math.}, 141: 1460--1478, 2005.


\bibitem{Fish}
Tom Fisher.
\newblock The higher secant varieties of an elliptic normal curve.
\newblock preprint, 2006.

\bibitem{FG} 
Osamu Fujino and Yoshinori Gongyo.
\newblock On canonical bundle formulas and subadjunctions.
\newblock {\em Michigan Math. J.}, 60: 255--264, 2012.

\bibitem{GBH}
Hans-Christian Graf von Bothmer and Klaus Hulek.
\newblock Geometric syzygies of elliptic normal curves and their secant varieties.
\newblock {\em Manuscripta Math.}, 113 (1): 35--68, 2004.

\bibitem{G:Kosz}
Mark Green.
\newblock Koszul cohomology and the geometry of projective varieties.
\newblock {\em J. Differential Geom.}, 19: 125--171, 1984.

\bibitem{GL:Syz}
Mark Green and Robert Lazarsfeld.
\newblock Some results on the syzygies of finite sets and algebraic curves.
\newblock {\em Compositio Math.}, 67: 301--314, 1988.


\bibitem{HM} 
Christoper Hacon and James M\textsuperscript{c}Kernan.
\newblock On Shokurov's rational connectedness conjecture.
\newblock {\em Duke Math. J.} 138: 119--136, 2007.


\bibitem{K} 
J\'{a}nos Koll\'{a}r.
\newblock Singularities of the minimal model program.
\newblock {\em Cambridge Tracts in Mathematics}, 200, 2013.

\bibitem{Lazarsfeld:CohSymm}
Robert Lazarsfeld.
\newblock Cohomology on symmetric products, syzygies of canonical curves, and a
theorem of {K}empf.
\newblock In {\em Einstein metrics and {Y}ang-{M}ills connections ({S}anda,
	1990)}, volume 145 of {\em Lecture Notes in Pure and Appl. Math.}, pages
89--97. Dekker, New York, 1993.

\bibitem{Mat86}
Hideyuki Matsumura.
\newblock {\em Commutative ring theory}, volume~8 of {\em Cambridge Studies in
	Advanced Mathematics}.
\newblock Cambridge University Press, Cambridge, 1986.
\newblock Translated from the Japanese by M. Reid.

\bibitem{Rathmann}
J\"{u}rgen Rathmann.
\newblock An effective bound for the gonality conjecture.
\newblock preprint, arXiv:1604.06072

\bibitem{Soule}
Christophe Soul\'{e}.
\newblock Secant varieties and successive minima.
\newblock {\em J. Algebraic Geometry}, 13: 323--341, 2004.

\bibitem{Vermeire:SyzSecantVarCurves}
Jessica Sidman and Peter Vermeire.
\newblock Syzygies of the secant variety of a curve.
\newblock {\em Algebra Number Theory}, 3(4): 445--465, 2009.


\bibitem{Vermeire:EqSecGeoComp}
Jessica Sidman and Peter Vermeire.
\newblock Equations defining secant varieties: geometry and computation.
\newblock In {\em Combinatorial aspects of commutative algebra and algebraic
	geometry}, volume~6 of {\em Abel Symp.}, pages 155--174. Springer, Berlin,
2011.

\bibitem{Ullery:Thesis}
Brooke Ullery.
\newblock Tautological bundles on the Hilbert scheme of points and the normality of secant varieties.
\newblock Ph.D Thesis, University of Michigan, 2014.

\bibitem{Ullery:SecantVar}
Brooke Ullery.
\newblock On the normality of secant varieties.
\newblock {\em Adv. Math.}, 288: 631--647, 2016.

\bibitem{Vermeire:SomeResultSecFlip}
Peter Vermeire.
\newblock Some results on secant varieties leading to a geometric flip
construction.
\newblock {\em Compositio Math.}, 125(3): 263--282, 2001.

\bibitem{Vermeire:RegPowers}
Peter Vermeire.
\newblock On the regularity of powers of ideal sheaves.
\newblock {\em Compositio Math.}, 131(2): 161--172, 2002.

\bibitem{Vermeire:RegNormSecCurve}
Peter Vermeire.
\newblock Regularity and normality of the secant variety to a projective curve.
\newblock {\em J. Algebra}, 319(3): 1264--1270, 2008.

\bibitem{Vermeire:EqSyzSec}
Peter Vermeire.
\newblock Equations and syzygies of the first secant variety to a smooth curve.
\newblock {\em Proc. Amer. Math. Soc.}, 140(8): 2639--2646, 2012.


\bibitem{Yang:Letter}
Ruijie Yang.
\newblock A letter about syzygies of secant varieties.
\newblock 2016.

	
\end{thebibliography}

\end{document}